\documentclass[11pt]{amsart}

\usepackage[T1]{fontenc}
\usepackage[utf8]{inputenc}
\usepackage{graphicx} 
\usepackage{amsthm}
\usepackage{amssymb}
\usepackage{amsfonts}
\usepackage{xcolor}
\usepackage{tikz-cd}
\usepackage{mathtools}
\usepackage{enumerate}
\usepackage{comment}
\newtheorem{thm}{Theorem}[section]
\newtheorem{theorem}{Theorem}
\newtheorem{cor}[theorem]{Corollary}
\newtheorem{lm}[thm]{Lemma}
\newtheorem{pr}[thm]{Proposition}
\theoremstyle{definition}
\newtheorem{df}[thm]{Definition}
\theoremstyle{remark}
\newtheorem{cl}{Claim}
\newtheorem{rem}[thm]{Remark}

\newtheorem*{question}{Question}
\newcommand{\cftf}{$C(4)$--$T(4)$}
\newcommand{\cftfs}{$C(4)$--$T(4)$ }

\newcommand{\csvs}{$C(7)$ }
\newcommand{\cs}{$C(6)$}
\newcommand{\css}{$C(6)$ }

\usepackage[a4paper, left = 2.5cm, right = 2.5cm, top = 2.5cm, bottom = 2.5cm, headsep = 1.2cm]{geometry}

\usepackage{hyperref}

\title[Fixed point properties for graphical small cancellation]
{Local-to-global fixed point properties for graphical \cftfs and \css small cancellation complexes}

\author{Karol Duda}
\address[K.~Duda]{Silesian University of Technology, Gliwice, Poland}
\email{karol.duda@polsl.pl}

\author{Huaitao Gui}
\address[H.~Gui]{Institut for Matematiske Fag, University of Copenhagen, 2100 Copenhagen, Denmark}
\email{hg@math.ku.dk}

\begin{document}

\maketitle

	\begin{abstract}
    Graphical small cancellation extends the classical small cancellation theory and provides a powerful method for constructing groups with prescribed subgraphs in their Cayley graphs.
    We prove that torsion subgroups of groups defined by possibly infinite \cftfs graphical small cancellation presentations are finite. 
    We also prove the corresponding result for groups defined by \css graphical small cancellation presentations under the additional assumption that the presentation is torsion-essentially \cs-free.
	Both results follow from a more general result on local-to-global fixed point properties for torsion groups acting by automorphisms on simply connected graphical small cancellation complexes.
	\end{abstract}

\section{Introduction}

Graphical small cancellation generalizes classical small cancellation theory. It provides a method for constructing groups whose Cayley graphs contain certain prescribed graphs, giving rise to groups with interesting properties; see, for example, \cite{Gromov-random, OW-Kazhdan, Osajda20, Coulon-Gruber, Osajda-residual}. Recently, graphical small cancellation was also used to show that universal Cayley graphs cannot exist \cite{Tucker-obstructions}.

A graphical presentation of a group has the form $G=\langle f: \Gamma \rightarrow \Theta \rangle$, where $\Theta$ is a connected graph, $f:\Gamma \rightarrow \Theta$ is an immersion, and $G$ is the fundamental group of $\Theta$ modulo the normal subgroup generated by the images of all immersed cycles in $\Gamma$. 
The use of such presentations goes back to the work of Rips and Segev, who constructed torsion-free groups without the unique product property (see \cite{RS-torsion,Steenbock-torsion}). Small cancellation theory over graphical presentations was later used explicitly by Gromov \cite{Gromov-random} in his construction of Gromov's monster---a group that contains an expander graph in a weak sense.

There are two 2-dimensional combinatorial complexes naturally associated to such a presentation: the \textbf{thickened graphical complex} $X_t$, where 2-cells are attached to $\Theta$ to fill the cycles coming from $\Gamma$, and the \textbf{non-thickened graphical complex} $X_c$, where cones over the connected components of $\Gamma$ are attached to $\Theta$ instead. 
We refer to the presentation together with these two realizations as the associated \textbf{graphical complex} $X$. 
We call $\Theta$ the $1$-skeleton of $X$ and denote it by $X^1$. 
Each connected component $\Gamma_i$ of $\Gamma$ determines a \textbf{graphical cell} of $X$. 
Both realizations have fundamental group $G$; in particular, we call $X$ simply connected if either, equivalently both, of its realizations is simply connected. 
Precise definitions will be given in Section~\ref{sec:preliminaries}.

Our main focus is on torsion subgroups of groups defined by graphical small cancellation presentations. 
This is motivated by the general philosophy that graphical small cancellation complexes behave like nonpositively curved spaces, together with several conjectures and questions concerning torsion subgroups and locally elliptic actions on nonpositively curved complexes; see, for example, \cite{Swe99,HaeOsa}.

We prove the following finiteness results.
	
\begin{theorem}\label{thm:tA}
		Torsion subgroups of groups defined by \cftfs graphical small cancellation presentations are finite.
\end{theorem}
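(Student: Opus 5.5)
The plan is to derive Theorem~\ref{thm:tA} from a local-to-global fixed point statement for torsion groups acting on simply connected \cftfs graphical complexes, the result the abstract refers to. Let $G=\langle f:\Gamma\to\Theta\rangle$ be a \cftfs graphical small cancellation presentation and $X$ its graphical complex. Let $\widetilde X$ be the universal cover of $X$, that is, the Cayley-type complex. It is a simply connected \cftfs graphical complex on which $G$ acts freely on vertices, so every vertex stabilizer is trivial. Let $H\le G$ be a torsion subgroup. The goal is to show that $H$ fixes a point of $\widetilde X$, or at least stabilizes a bounded, finite-type configuration. Concretely, I aim for one of two outcomes: $H$ fixes a vertex, or $H$ stabilizes a graphical cell, with a suitable cone point in the non-thickened realization $X_c$.

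\textbf{First case: $H$ fixes a vertex.} Since vertex stabilizers in $\widetilde X$ are trivial, $H$ is trivial.

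\textbf{Second case: $H$ stabilizes a graphical cell.} Then $H$ acts freely on the vertex set of the corresponding lifted component $\widetilde\Gamma_i$, which is a copy of a component of $\Gamma$. I need $H$ to be finite here. Because $G$ acts freely on $\widetilde X^1$, the stabilizer of a cell acts freely on the cell's boundary graph and commutes with the immersion into $\Theta$. One then shows, via the small cancellation hypothesis, that this stabilizer is finite, exactly as in classical small cancellation, where relator stabilizers are finite cyclic. If components of $\Gamma$ are infinite graphs, finiteness is not automatic. In that case I would instead show that $H$ acting freely on the cell's boundary, which embeds isometrically by the small cancellation conditions, yields a torsion group acting freely on a graph quasi-isometric to a tree or line. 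That forces $H$ to be finite, or to have an infinite-order element, which would be a contradiction.

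\textbf{The local-to-global step, which I expect to be the main obstacle.} Here one must show that a torsion group acting on a simply connected \cftfs graphical complex in which every element (or every finitely generated subgroup) has a fixed point has a global fixed point. I would proceed in three steps.
\begin{enumerate}
\item Show that each single torsion element $g$ fixes a vertex or stabilizes a cell. Take a vertex $v$ and consider the geodesic configurations between $v, gv, \dots, g^{n-1}v$. Use the \cftfs disc diagram theory (Greendlinger-type lemmas, convexity of cells, and the quadric/flag geometry of the dual structure) to find a ``center'' of a finite orbit that is canonical and therefore $g$-invariant.
\item Pass to finitely generated subgroups. For $H=\langle g_1,\dots,g_k\rangle$ with each $g_i$ elliptic, use a Helly-type property for fixed point sets, which are convex in the \cftfs setting, to get a common fixed structure. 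This resembles Haettel--Osajda's work for Helly and $C(4)$--$T(4)$ complexes, where fixed sets form convex subcomplexes with the Helly property.
\item Pass to arbitrary $H$ as an increasing union of finitely generated subgroups. The fixed sets form a decreasing family of nonempty convex sets; to ensure that they have nonempty intersection, use a bound on the size of finite subgroups, namely that finite subgroups are conjugate into cell stabilizers, which have bounded type. This forces the chain to stabilize.
\end{enumerate}

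Combining the steps: every torsion $H\le G$ is conjugate into a cell stabilizer, and is therefore finite. The hardest part is producing the canonical invariant center in the first step and proving the Helly property in the second for \cftfs graphical complexes. For these I would first try to pass to a quadric or median-like structure on the thickened complex $X_t$, where the combinatorial convexity arguments are available.
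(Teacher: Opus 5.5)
\textbf{The gap is your Step 2.} Your overall frame matches the paper: let $H$ act on the universal cover, show that it stabilizes a single graphical cell, and conclude by freeness on $X^1$. The last step is immediate, because components of $\Gamma$ are finite by definition, so your detour about infinite components is unnecessary.

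A Helly property for fixed sets gives a common fixed point only once you know the fixed sets pairwise intersect. That means every $2$-generated subgroup of $H$ must already stabilize something, which is the statement you are trying to prove. Ellipticity of the generators cannot give this by convexity alone: think of two reflections of a line, which generate $\mathbb{Z}/2\ast\mathbb{Z}/2$. The torsion hypothesis on products has to enter somewhere, and Helly is not where it enters.

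In fact Helly is vacuous here. Since the action on $X^1$ is free, a nontrivial finite-order element stabilizes exactly one graphical cell; its fixed set in the quadrization $Y$ is a single vertex (Proposition~\ref{pr:singlefix}). So the whole content is Lemma~\ref{lem:inford}: if $f$ stabilizes a cell $x$ and $g$ does not, then $\langle f,g\rangle$ contains an element of infinite order. You would need a substitute for Serre's lemma on trees, and you supply none. The Haettel--Osajda arguments you allude to assume local finiteness, which the universal cover of an infinite presentation need not have.

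\textbf{How the paper supplies the missing argument.}
\begin{enumerate}
\item For every $y\neq x$ some power satisfies $\mathfrak{X}_1(x,y)\cap f^k\mathfrak{X}_1(x,y)=\emptyset$. This uses the strong Helly property for graphical cells together with freeness on $X^1$. The power depends on $y$; no uniform one exists.
\item Geodesics from $x$ with disjoint first-step sets concatenate to a geodesic. This uses Gauss--Bonnet and plateaux in minimal CAT(0) square disc diagrams of the quadric complex $Y$.
\item An iterated construction by conjugated powers finishes the argument. A non-terminating extension gives a geodesic ray translated by $u_{j,1}^{l_j}u_{j,0}^{l_j}\in\langle f,g\rangle$, an element of infinite order. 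Termination at every level instead gives a strictly decreasing chain of nonempty admissible-exponent sets inside the finite set $\{1,\dots,m-1\}$, which is impossible.
\end{enumerate}

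\textbf{Other steps.}
\begin{itemize}
\item Once the stabilized cell is known to be unique, your Step 3 is unnecessary. As stated it is also false: finite subgroups need not be uniformly bounded. For example, $\ast_{n\geq 3}\mathbb{Z}/n$ with relators $a_n^n$ has no pieces at all.
\item Your Step 1 needs a genuine argument, since bounded orbits do not give fixed points in non-locally-finite $C(p)$--$T(4)$ complexes. The paper instead proves $X_c$ contractible and applies the Casacuberta--Dicks theorem.
\item You should work with the \emph{simplified} universal cover. In the plain one (e.g.\ for a proper-power relator) several cones share one boundary and $X_c$ is not contractible. A torsion element may then permute those cones instead of stabilizing one.
\end{itemize}
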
 

\begin{theorem}\label{thm:tD}
        Not essentially \css torsion subgroups of groups defined by \css graphical small cancellation presentations are finite. In particular, all torsion subgroups of groups defined by torsion-essentially \cs-free graphical small cancellation presentations are finite.
\end{theorem}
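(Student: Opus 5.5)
The plan is to deduce Theorem~\ref{thm:tD} from the general local-to-global fixed point result announced in the abstract, applied to the action of a torsion subgroup $H\le G$ on the universal cover $\widetilde X$ of the graphical complex $X$ of the presentation. First I pass to $\widetilde X$. It is a simply connected \css graphical small cancellation complex, and $G$ acts on it freely on vertices by deck transformations, with finitely many vertex orbits when $\Theta$ is finite. Which complex is meant depends on whether the presentation has finitely many generators. If not, I would work with the cone realization $X_c$ and keep track of cone vertices, since these are the only points where a torsion element can fix something. A torsion subgroup $H$ then acts on $\widetilde X$ by automorphisms with every element of finite order.

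The second step is to invoke the local-to-global fixed point theorem for torsion groups acting on simply connected \css complexes, where the acting group is not essentially \cs. It should say that such an action has a global fixed point, or at least a bounded orbit, so that $H$ stabilizes some graphical cell (cone point) or some finite configuration of cells. The hypothesis ``not essentially \css'' should be exactly what rules out the bad case. In that case $H$ acts on an infinite \cs-like subcomplex, for example a flat triangular tiling inside $\widetilde X$, where locally elliptic but non-elliptic actions can occur.

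The third step turns a fixed point into finiteness. Vertices of $\Theta$ have trivial stabilizers, so if $H$ fixes a cone point over a lift of a component $\Gamma_i$, then $H$ embeds in the automorphism group of that lift commuting with the immersion. By the graphical small cancellation condition, the stabilizer of a graphical cell acts freely on its vertices and is isomorphic to a subgroup of $\mathrm{Aut}(\Gamma_i)$ that preserves labels. A torsion element must in fact stabilize a relator cell. If $H$ stabilizes a finite set of cells instead, a finite-index subgroup fixes a cell, so it suffices to show that cell stabilizers have finite torsion subgroups.

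I expect this last point to be the main obstacle when $\Gamma_i$ is infinite, since then $\mathrm{Aut}(\Gamma_i)$ might contain infinite torsion subgroups. I would first try to use that $H$ acts freely on the vertices of the lift of $\Gamma_i$ while preserving labels. Combined with the fixed point theorem applied again inside the cell, this should force $H$ to fix a bounded set, and hence be finite by freeness on vertices. The ``in particular'' clause is then immediate. In a torsion-essentially \cs-free presentation no torsion subgroup is essentially \cs, so every torsion subgroup satisfies the hypothesis of the first sentence.
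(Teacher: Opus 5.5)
Your reduction is the same as the paper's. Theorem~\ref{thm:tD} is an immediate corollary of Theorem~\ref{thm:tB}, applied to a torsion subgroup $H$ acting by deck transformations on the universal cover, freely on the $1$-skeleton.

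As written, though, there is a concrete gap in your choice of cover. You use the ordinary universal cover $\tilde X$. The hypothesis ``not essentially $C(6)$'' is \emph{defined} through the action on the \emph{simplified} universal cover $\tilde X^{+}$, and Theorem~\ref{thm:tB} (through Lemma~\ref{lm:fixpt}) is proved only for simplified complexes. On $\tilde X$ your claim that a torsion element must stabilize a relator cell fails exactly where torsion comes from. If a relator is a proper power, several lifts of that component have the same image in $\tilde\Theta$, and a suitable conjugate of the root permutes their cones cyclically without fixing any cone point. For $\langle s\mid s^n\rangle$, $\tilde X_c$ is $n$ cones over a single $n$-cycle, permuted freely by $\mathbb{Z}/n$. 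Moreover, two cones attached along the same map already carry a nontrivial $2$-cycle, so $\tilde X_c$ is not contractible and the Casacuberta--Dicks argument is unavailable. Your hedge ``or some finite configuration of cells'' does not rescue this, because the theorem you want to invoke is not available on $\tilde X$. The repair is to pass to $\tilde X^{+}$. It is still simply connected and $C(6)$, the simplification is canonical and hence $G$-equivariant, and $G$ still acts freely on $\tilde\Theta$. Theorem~\ref{thm:tB} then applies verbatim to $H$.

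Your third step and its ``main obstacle'' are not needed. In this paper every component $\Gamma_i$ is finite by definition; only the number of components may be infinite. Theorem~\ref{thm:tB} also already concludes that $H$ is finite. In its proof $H$ stabilizes one graphical cell and acts freely on its finitely many vertices, so $|H|$ is at most the number of those vertices. No second fixed point argument, local finiteness, or finiteness of $\Theta$ is required.

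Your guess at the excluded borderline case is also off: it is not a flat triangular tiling. It is a stabilized cell containing an immersed cycle that is a concatenation of exactly six pieces. There the Helly property for intersection assembly trees (Lemma~\ref{lem:hellyrelatortrees}) can fail, so Lemma~\ref{lmklcsv} need not produce a power $f^k$ with $N(\mathfrak{X}_1(x,y))\cap f^k\mathfrak{X}_1(x,y)=\emptyset$.
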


The terminology in Theorem~\ref{thm:tD} is meant to isolate the borderline case of the graphical \css condition. 
More precisely, an essentially \css graphical cell is one containing an immersed cycle that can be covered by exactly six pieces; see Definition~\ref{df:essentially}. 
An automorphism is essentially \css if it stabilizes such a cell, and a torsion subgroup is not essentially \css if its canonical action on the associated simplified universal cover is not essentially \css; see Definition \ref{df:essentially_action} and \ref{df:essentially_presentation}. 
Thus Theorem~\ref{thm:tD} leaves open only the case of essentially \css torsion subgroups. 
In particular, this obstruction is absent in the graphical \csvs setting.

    Theorems~\ref{thm:tA} and \ref{thm:tD} are consequences of a more general local-to-global fixed point theorem for actions on simply connected graphical small cancellation complexes. 
    Recall that an action of a group on a metric space is called \textbf{locally elliptic} if every cyclic subgroup has bounded orbits, and \textbf{elliptic} if the whole group has bounded orbits. 
    Haettel and Osajda conjectured that every locally elliptic action of a finitely generated group on a finite-dimensional nonpositively curved complex is elliptic \cite{HaeOsa}. 
    They proved this for several classes of complexes, including locally finite simply connected graphical \cftfs complexes and not necessarily locally finite graphical $C(18)$ complexes.

    For many nonpositively curved complexes, an automorphism fixes a point if and only if the cyclic group generated by it has bounded orbit. However, as we show in Section~\ref{sec:fix}, this equivalence fails for non-locally finite $C(p)$–$T(q)$ small cancellation complexes with $q<5$. 
    Nevertheless, we prove in Section~\ref{sec:fix} that every finite order automorphism of a simply connected \cftfs or \css graphical complex fixes a point.
    Thus torsion group actions in this setting provide a natural class of locally elliptic actions to study.
    
    Our main result is the following local-to-global fixed point theorem. It establishes a particular case of the meta-conjecture of Haettel and Osajda \cite{HaeOsa} for torsion group actions on graphical \cftfs and \css complexes, without assuming local finiteness of the complex or finite generation of the acting group, but under the assumption that the induced action on the $1$-skeleton is free.
   
\begin{theorem}\label{thm:tB}
		Let $X$ be a simply connected \cftfs or \css graphical small cancellation complex. Let $G$ be a torsion group acting on $X$ by automorphisms such that the induced action on the $1$-skeleton $X^{1}$ is free. 
        If $X$ is \cftf, then $G$ is finite.
        If $X$ is \css and the action is not essentially \cs, then $G$ is finite.
        In particular, if $X$ is torsion-essentially \cs-free, then $G$ is finite.
        In either case there is a graphical cell of $X$ stabilized by $G$.
\end{theorem}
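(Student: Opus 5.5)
The plan is to show that $G$ stabilizes a single graphical cell $\Gamma_i$ of $X$, and then read off finiteness from freeness of the action on $X^1$.

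\textbf{Finiteness from a stabilized cell.} Suppose $G$ stabilizes $\Gamma_i$. Then $G$ acts freely on the image of $\Gamma_i$ in $X^1$. By the small cancellation condition this image is a finite set of vertices. In the non-locally-finite setting this is exactly where care is needed, so I would check it with the cycle-length bounds: $\Gamma_i$ is a connected graph, and in the simplified complexes every relevant immersed cycle is covered by finitely many pieces. If finiteness of $\Gamma_i$ is not available, I would instead use that $G$ fixes the cone point of $\Gamma_i$ in $X_c$, and argue that a torsion group acting freely on the vertices of a single cell, compatibly with its cycle structure, must be finite. Either way, the main work is producing the stabilized cell.

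\textbf{Producing the stabilized cell.} I would invoke the Section~\ref{sec:fix} result that every finite order automorphism fixes a point. Since the action on $X^1$ is free, that fixed point lies in the interior of a graphical cell (the cone point in $X_c$), so each element $g\in G$ stabilizes some graphical cell. For finite subgroups I would use a Helly-type argument, assuming it is available from the combinatorial convexity of small cancellation complexes (cells and their intersections behave like convex sets in $C(4)$--$T(4)$ and $C(6)$ complexes). The step is to show that if $g$, $h$ and $gh$ each stabilize cells, then $\langle g,h\rangle$ stabilizes one. I would prove this by analysing a minimal-area van Kampen or disk diagram bounded by paths joining the stabilized cells. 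Greendlinger / strebel-type lemmas, namely the classification of reduced disk diagrams in $C(4)$--$T(4)$ and $C(6)$, force the diagram to be degenerate: either the cells coincide, or they share a piece through which the group elements act, which would contradict freeness on $X^1$.

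\textbf{Local to global.} For an arbitrary torsion $G$, consider the poset of finite subgroups. Each finitely generated subgroup $H$ stabilizes a cell $\Gamma_H$. If $H$ stabilized two distinct cells, $H$ would preserve their intersection, a union of pieces, and act freely on it; this would bound $|H|$ in terms of piece size. In the $C(4)$--$T(4)$ case I expect a uniform bound, because pieces are proper subpaths of a fixed cell. Once $|H|$ is bounded uniformly for all finitely generated $H\le G$, $G$ is finite, and then a finite group stabilizes a cell by the previous step.

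\textbf{Where the $C(6)$ hypothesis enters, and the main obstacle.} The main obstacle is the disk-diagram step in the $C(6)$ case. There an essentially $C(6)$ cell, one with a cycle covered by exactly six pieces, allows a flat configuration: a ring of six cells around a vertex, with hexagonal symmetry. A torsion element could rotate this configuration without stabilizing a common cell, and the ring could also be extended indefinitely. The non-essentially-$C(6)$ hypothesis is exactly what excludes this configuration. I would first try to show that any non-degenerate minimal diagram in the $C(6)$ case contains such a six-piece cell stabilized by some element of $G$, which is ruled out by hypothesis.
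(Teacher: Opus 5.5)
There is a genuine gap in the passage from single elements to the whole group. Your two-generator step can be read in two ways, and neither works. If it is meant for arbitrary $g,h$, it is false even for $C(4)$--$T(4)$. Take the square tiling of the plane with squares centred at integer points; this is a simply connected $C(4)$--$T(4)$ graphical complex. Let $a$ be the rotation by $\pi/2$ about $(0,0)$ and $b$ the rotation by $\pi$ about $(1,0)$. Every element of $\langle a,b\rangle$ acts freely on the $1$-skeleton, and $a$, $b$, $ab$ are all rotations about square centres. Yet $ba^2$ is a translation, so $\langle a,b\rangle$ stabilizes no cell. No minimal-diagram analysis of finitely many stabilized cells can exclude this; the torsion hypothesis has to be applied to longer and longer words. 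If instead the step is meant only for finite $\langle g,h\rangle$, then your local-to-global step presupposes that finitely generated subgroups of $G$ are finite. That does not follow from $G$ being torsion (Golod--Shafarevich groups, free Burnside groups). The proposed uniform bound on $|H|$ via piece size also does not exist, since cells and pieces may have unbounded size. What you actually need is a single cell stabilized by all of $G$.

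The paper supplies exactly this through Lemma~\ref{lem:inford}. Suppose a nontrivial $f$ stabilizes a cell $x$, which is its unique fixed vertex in $Y$ (the quadrization, resp.\ the Wise complex). If $g$ does not stabilize it, then $\langle f,g\rangle$ contains an element of infinite order. The proof works in $Y$, where minimal diagrams are CAT(0):
\begin{itemize}
\item some power $f^k$ makes $\mathfrak{X}_1(x,y)$ and $f^k\mathfrak{X}_1(x,y)$ disjoint (resp.\ non-adjacent, via the Helly property for intersection assembly trees in a not essentially $C(6)$ cell);
\item concatenating geodesics at such a vertex stays geodesic;
\item an iterated conjugation construction either produces an element translating along a geodesic ray, or strictly shrinks a finite nonempty set of admissible exponents, which cannot go on forever.
\end{itemize}
Theorem~\ref{thm:tB} is then immediate: all of $G$ stabilizes the cell of one fixed $f$ and acts freely on its finitely many vertices. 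Finiteness of $\Gamma_i$ is part of the definition of a graphical presentation, not a consequence of small cancellation.

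You also misread the $C(6)$ hypothesis. ``The action is not essentially $C(6)$'' only guarantees \emph{one} element of $G$ that stabilizes no essentially $C(6)$ cell. Other elements may stabilize such cells, so you cannot exclude the six-piece configuration globally. The paper instead takes $f$ to be that element and, when needed, replaces $g$ by $gfg^{-1}$ and $y$ by $gx$.
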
 

Let us compare Theorem~\ref{thm:tB} with the existing results. 
The known theorems differ along several independent axes: whether the complex is classical or graphical, whether local finiteness is assumed, whether the acting group is finitely generated, and whether the action is free on the $1$-skeleton. 
Duda proved analogous results for classical \cftfs and \css complexes under the assumption that the action is free on the $1$-skeleton \cite{dudaall3}. 
Haettel and Osajda proved ellipticity of locally elliptic actions for finitely generated groups in several graphical settings, including locally finite graphical \cftfs complexes and graphical $C(18)$ complexes \cite{HaeOsa}. 
In the $C(3)$--$T(6)$ case, corresponding results were obtained for finitely generated groups first in the classical setting in \cite{dudaall3} and then in the graphical setting in \cite{Gui}, both using the fixed-point theorem for $2$-dimensional CAT(0) complexes from \cite{NOP-torsion}. 
Theorem~\ref{thm:tB} extends this type of result to not necessarily locally finite graphical \cftfs complexes, and to a subclass of graphical \css complexes containing all graphical \csvs complexes.

The following question remains open.

\begin{question}
   Can an essentially \css torsion subgroup of a group defined by a \css graphical small cancellation presentation be infinite?
\end{question}

The freeness assumption in Theorem~\ref{thm:tB} cannot simply be removed for arbitrary torsion groups. 
Indeed, by a theorem of Serre \cite[Theorem~15, Chapter~I.6.1]{Serre_Trees}, every countable infinitely generated group acts without a fixed point on a tree. 
However, the finitely generated case remains open.

\begin{question}
Does the conclusion of Theorem~\ref{thm:tB} still hold if the freeness assumption on the \(1\)-skeleton is dropped, but the torsion group \(G\) is assumed to be finitely generated?
\end{question}

	Finally, let us present an application of Theorem~\ref{thm:tB} to \textbf{automatic continuity}. This property has its origins in descriptive set theory; roughly, it says that between certain pairs of topological groups, every group homomorphism is automatically continuous. Recently, automatic continuity has been established for a large class of homomorphisms into ``nonpositively curved'' groups, see e.g.\ \cite{keppeler2021automatic}, \cite{BogopolskiCorson}. 

\begin{theorem}
		\label{thm:tC}
		Let $G$ be a group acting geometrically on a simply connected torsion-essentially \cs-free graphical small cancellation complex $X$, and suppose that the action induces a free action on the $1$-skeleton of $X$. If $H$ is a subgroup of $G$, then any group homomorphism $\varphi : L \rightarrow H$ from a locally compact group $L$ is either continuous, or there exists a normal open subgroup $N\subseteq L$ such that $\varphi(N)$ is a finite group.
	\end{theorem}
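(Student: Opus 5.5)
We plan to reduce Theorem~\ref{thm:tC} to Theorem~\ref{thm:tB} using the standard automatic-continuity machinery of \cite{keppeler2021automatic, BogopolskiCorson}. Their criterion says that a homomorphism $\varphi\colon L\to H$ from a locally compact group is either continuous or has $\varphi(N)$ finite for some normal open $N\subseteq L$, provided the target satisfies two conditions. First, $H$ must contain no infinite torsion subgroup that is ``bad'' for the argument; it suffices that every torsion subgroup of $H$ is finite. Second, $H$ must contain no subgroup isomorphic to $\mathbb{Q}$ or to a $p$-adic integer group $\mathbb{Z}_p$. We follow the structure of those proofs. By van Dantzig's theorem, $L$ has an open subgroup $L_0$ that is compact-by-discrete up to the connected component. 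The connected component $L^\circ$ is handled by the divisibility/$\mathbb{Q}$ argument. The totally disconnected part is handled using compact open subgroups: a compact open subgroup $K$ is profinite, and a discontinuous $\varphi|_K$ forces $\varphi$ to send some infinite pro-$p$ or torsion subgroup injectively enough into $H$.

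The first step is to verify that torsion subgroups of $H$ are finite. Let $T\le H\le G$ be torsion. The action of $G$ on $X$ restricts to $T$, and it is free on $X^1$ by hypothesis. Since $X$ is torsion-essentially \cs-free, Theorem~\ref{thm:tB} applies and gives that $T$ is finite. Moreover, $T$ stabilizes a graphical cell. Because the action is geometric, there are finitely many orbits of cells, and stabilizers of graphical cells act freely on their (finite) cells. So we obtain a uniform bound on $|T|$. This is the important quantitative point: the $\mathbb{Z}_p$-type obstructions are excluded via bounded torsion, since every finite subgroup embeds into the stabilizer of one of finitely many graphical cells.

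The second step excludes $\mathbb{Q}$ and $\mathbb{Z}_p$ (or more precisely, nontrivial divisible elements and uniformly approximable elements). We use that $G$ acts geometrically on a graphical small cancellation complex, so $G$ is finitely presented with a good combinatorial geometry. Each infinite-order element $g$ acts without bounded orbits, because otherwise it fixes a point by Section~\ref{sec:fix}-type results and lies in a finite stabilizer. One then expects a positive translation length, which is discrete by cocompactness. A positive, discrete translation length prevents $g$ from having roots of arbitrarily high order, which excludes $\mathbb{Q}$. Together with bounded torsion this also excludes $\mathbb{Z}_p$ images: the image of a compact pro-$p$ group under an abstract homomorphism would yield either unbounded torsion or infinitely divisible elements. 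With these two properties, the cited theorem gives the dichotomy.

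The main obstacle is the second step: establishing discreteness of translation lengths (or at least the absence of infinitely divisible elements) in the non-CAT(0), possibly non-locally-finite graphical setting. I would first try to use the known result that groups acting geometrically on graphical $C(6)$/\cftfs complexes are biautomatic or act on a systolic/quadric-type complex built from $X$. This gives translation-length discreteness, and would avoid developing it directly.
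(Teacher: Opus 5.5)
Your reduction is the paper's. By Theorem~B of \cite{keppeler2021automatic} it suffices to place $G$ in the class $\mathcal{G}$; its subgroup $H$ then lies in $\mathcal{G}$ too, since the defining conditions pass to subgroups. Your Step~1 is also exactly the paper's argument: Theorem~\ref{thm:tB} makes every torsion subgroup finite, which settles both the artinian and the Pr\"ufer conditions. The van Dantzig sketch and the uniform bound on $|T|$ are true but unnecessary.

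The genuine gap is Step~2. You leave it as an expectation, and the justifications you offer for it do not work.
\begin{itemize}
\item Discreteness of stable translation lengths does not follow from cocompactness alone. A uniform positive lower bound for infinite-order elements is a real theorem (Gersten--Short, for biautomatic groups).
\item The appeal to Section~\ref{sec:fix} is backwards. That section only treats finite-order automorphisms, and it shows that bounded orbits need \emph{not} give fixed points in the non-locally-finite setting. What you actually need is that freeness on $X^1$ plus cocompactness makes $X$ locally finite. Then a bounded vertex orbit of $g$ is finite, some positive power $g^n$ fixes a vertex, and $g^n=1$ by freeness.
\item $\mathbb{Z}_p$ is torsion-free, so it cannot be excluded ``via bounded torsion''; you are conflating the $p$-adic integers with the Pr\"ufer group $\mathbb{Z}(p^\infty)$. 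The clean reason is that $G$ is finitely generated, hence countable, while $\mathbb{Z}_p$ is uncountable. Alternatively, $1\in\mathbb{Z}_p$ has $n$-th roots for every $n$ prime to $p$, which translation discreteness forbids.
\end{itemize}

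The missing concrete input is Theorem~\ref{t:ss}. Since $X$ is $C(6)$, its Wise complex $W(X)$ is systolic. The geometric action of $G$ on $X$ induces a geometric action on $W(X)$, so $G$ is a systolic group. The paper then finishes in one line with Proposition~\ref{p:automatic} (\cite[Prop.~11.1]{dudaall3}): a systolic group whose torsion subgroups are artinian lies in $\mathcal{G}$.

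If you prefer to unpack this rather than cite it, one route is: systolic $\Rightarrow$ biautomatic (Januszkiewicz--\'Swi\k{a}tkowski) $\Rightarrow$ translation discrete (Gersten--Short) $\Rightarrow$ no subgroup isomorphic to $\mathbb{Q}$, together with countability for $\mathbb{Z}_p$. Your hedging between ``systolic/quadric-type'' complexes should be resolved in favour of $W(X)$, since the quadrization plays no role in this $C(6)$ statement.
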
	

    Using Theorem~\ref{thm:tB}, one can also prove an analogous automatic continuity result for groups acting geometrically on simply connected graphical \cftfs complexes. However, a stronger result in this case was already established in \cite{HaeOsa}.

\subsection*{Structure of the paper}
In Section \ref{sec:gsc} we briefly introduce graphical small cancellation theory. 
In Section \ref{sec:sc_graphical} and \ref{sec:quad} we establish basic properties of graphical \cftfs\ complexes.
In Section \ref{sec:scv_graphical} and \ref{sec:sys} we establish basic properties of graphical \css complexes.
Section \ref{sec:cv} introduces the technical tools used in the main lemmas of the paper.

In Section \ref{sec:fix} we prove that every finite-order automorphism of a simply connected graphical \cftfs or \css complex has a fixed point. We also show that the finite-order assumption is necessary by constructing an example of an automorphism with bounded orbits that does not fix any point of the complex.

In Section \ref{sec:main}, we establish necessary conditions under which the concatenation of geodesics remains geodesic. These results are then applied in Section \ref{sec:main2} to derive the main technical lemma needed for the proof of Theorem~\ref{thm:tB}.

Finally, in Section \ref{sec:proofs} we prove Theorems \ref{thm:tA}-\ref{thm:tB}, and in the last section we present an application to automatic continuity.

\section{Preliminaries}\label{sec:preliminaries}

For fundamental notions such as CW-complexes, null-homotopy, and simple connectedness, see Hatcher's textbook \cite{AH}.

A map $X\rightarrow Y$ between complexes is \textbf{combinatorial} if its restriction to every open cell of \(X\) is a homeomorphism onto an open cell of \(Y\). A complex is \textbf{combinatorial} if the attaching map of each cell is combinatorial after a suitable subdivision of the domain sphere. An \textbf{immersion} is a combinatorial map which is locally injective.

Unless stated otherwise, all complexes and maps are combinatorial, and all attaching maps are immersions. We mostly consider $2$-dimensional combinatorial complexes, which we call $2$-complexes.

Starting from Section \ref{sec:quad} we will often use \textbf{square} complexes.
Square complexes are $2$-complexes whose $2$-cells are $4$-gons. In this case, instead of the usual terms, $0$-cell, $1$-cell and $2$-cell, we will use vertex, edge and square, respectively.

A \textbf{graph} is a $1$-dimensional CW-complex with $0$-cells called \textbf{vertices} and $1$-cells called \textbf{edges}. Distances between vertices in graphs are always measured by the \textbf{standard graph metric} which is defined for a pair of vertices $u$ and $v$ as the number of edges in the shortest path connecting $u$ and $v$. 

A graph $\Gamma$ is \textbf{simplicial} if there is no edge in $\Gamma$ with both endpoints attached to one vertex and no two edges of $\Gamma$ having their endpoints attached to the same unordered pair of vertices.
Among classes of simplicial graphs we distinguish the so called \textbf{bipartite} graphs: a graph $\Gamma$ is bipartite if the set of its vertices can be partitioned into two nonempty sets such that no edge has both of its endpoints in the same set.
A graph $\Gamma$ is \textbf{path-cycle extensible} if every immersed path in $\Gamma$ factors through an immersed cycle.

Let $\Gamma$ be a finite simplicial graph. A \textbf{cone} on $\Gamma$ is the quotient space
$$
Cone(\Gamma)=\Gamma\times[0,1]/\Gamma\times\{1\}. 
$$
A \textbf{simplicial cone} is $Cone(\Gamma)$ with a natural structure of a $2$-complex.

A \textbf{link} of a $0$-cell $v$ of a $2$-complex $X$ (denoted $X_v$) is the graph whose vertices correspond to the ends of $1$-cells of $X$ incident to $v$, and an edge joins vertices corresponding to the ends of $1$-cells $e_1, e_2$ if and only if there is a $2$-cell $F$ such that $e_1$ and $e_2$ form a corner of $F$ at $v$.

For $n\in \mathbb{N}$, let $P_n$ denote a \textbf{path graph} with $n$ edges, whose vertices are labeled from $0$ to $n$ in order. A \textbf{path} in a complex $X$ is a map $\gamma: P_n \rightarrow X$ for some $n$. 
The path $\gamma$ is \textbf{closed} if $\gamma(0)=\gamma(n)$. 
A path $\gamma': P_m\rightarrow X$ is called a \textbf{subpath} of $\gamma$ if there exists an injective, order-preserving map $\iota: P_m\rightarrow P_n$ such that $\gamma' = \gamma \circ \iota$. For given paths $P_n\rightarrow X$ and $P'_m\rightarrow X$ such that the terminal point of $P_n$ is equal to the initial point of $P'_m$, their \textit{concatenation} (denoted $P_n\cdot P'_m$) is the natural path $P_n\cdot P'_m\rightarrow X$. 

For $n\in \mathbb{N}_{>0}$, let $C_n$ denote a \textbf{cycle graph} with $n$ edges, whose vertices are labeled by $\mathbb{Z}/n\mathbb{Z}$. A \textbf{cycle} in $X$ is a map $\tau: C_n \rightarrow X$ for some $n$. 
Let $R_n$ denote an $n$-gon, i.e., a disc whose 1-skeleton (called \textbf{boundary}) is $C_n$, denoted $\partial R_n=C_n$. Every 2-cell in $X$ can be represented by a map $R_n\rightarrow X$ for some $n$.
We write $P$, $C$, or $R$ for the domain of a path, a cycle, or a representative of a 2-cell, respectively, when the size is not specified.

Let $\Gamma$ be a simplicial graph where every connected component $\Gamma_i$ of
$\Gamma$ is finite, although $\Gamma$ may have infinitely many
connected components.
Given an immersion of graphs $f:\Gamma \rightarrow \Theta$ with $\Theta$ connected, we define a group $G$ as follows. 
Decompose $\Gamma = \coprod_i \Gamma_i$.
Fix a vertex $u\in \Theta$, and for each $i$, choose a vertex $u_i\in \Gamma_i$ and a path $\gamma_i$ in $\Theta$ from $u$ to $f(u_i)$. Define a homomorphism $f_i: \pi_1(\Gamma_i,u_i)\rightarrow \pi _1(\Theta,u)$, $[\gamma]\mapsto [\gamma_i\cdot f\circ\gamma \cdot \overline{\gamma_i}]$, where $\gamma$ is a closed path in $\Gamma_i$ based at $u_i$, $\cdot$ denotes path concatenation, and $\bar{\cdot}$ denotes path reversal. 
Then define $G \coloneqq \pi_1(\Theta,u)\big/ 
\left\langle\!\left\langle \bigcup_i f_i(\pi_1(\Gamma_i,u_i)) \right\rangle\!\right\rangle$. This group is well-defined up to isomorphism, independently of the choices of the basepoints and connecting paths. 
We call $f:\Gamma\rightarrow \Theta$ a \textbf{graphical presentation} of $G$, denoted $G=\langle f:\Gamma \rightarrow \Theta \rangle$.
A graphical presentation is \textbf{simplified} if for $i\neq j$, there is no isomorphism $\Gamma_i \rightarrow \Gamma_j$ such that the following diagram commutes:

\[
\begin{tikzcd}
    \Gamma_i \arrow[rd] \arrow[d] & \\
    \Gamma_j \arrow[r] & \Theta
\end{tikzcd}
\]
For the rest of the paper, unless stated otherwise, we always assume that our graphical presentations are simplified. 
There is no loss of generality, as for any graphical presentation, there exists a simplified one defining the same group.

When $\Theta$ is a wedge of circles, and each $\Gamma_i$ is a cycle graph, the graphical presentation reduces to a classical group presentation. One can then associate a 2-complex to such a presentation by attaching a 2-cell to $\Theta$ for each $\Gamma_i\rightarrow \Theta$, this complex is referred to as the \textbf{presentation complex}. 
Note that each 2-cell in this complex can also be thought of as a cone over its boundary cycle $\Gamma_i$. In the classical setting, this change does not modify the underlying geometric structure significantly.

In the general setting, these two viewpoints of what a presentation complex is, lead to two natural ways of constructing a 2-complex with fundamental group $G=\langle f:\Gamma \rightarrow \Theta \rangle$. First, let $\Theta$ be the 1-skeleton. For every immersed cycle $C\rightarrow \Gamma$, attach a 2-cell to $\Theta$ along the composition $C\rightarrow \Gamma \rightarrow \Theta$. The resulting complex is called the \textbf{thickened graphical complex} associated to $G$, and we denote it by $X_t$. 
The term “thickened” comes from the fact, that for any connected component $\Gamma_i$,
we have a \textbf{thick cell} which is formed by gluing 2–cells along all immersed
cycles in $\Gamma_i$. Alternatively, for each connected component $\Gamma_i$ of $\Gamma$, attach the simplicial cone $Cone(\Gamma_i)$ to $\Theta$ via the map $f|_{\Gamma_i}$. The resulting complex is called the \textbf{non-thickened graphical complex}, denoted $X_c$. Note that every 2-cell in $X_c$ is a triangle.

Observe that in the general setting, the difference between the topology of thickened and non-thickened graphical complexes becomes significant, as e.g. thickened cells may contain spheres, while non-thickened ones cannot. Consequently, these complexes are used to obtain different properties of a presentation. 

Note that the thickened and non-thickened graphical complexes associated to a graphical presentation may have interesting properties even when the associated group itself is not particularly interesting. This is especially relevant in the case of simply connected thickened or non-thickened graphical complexes.
Consequently, our main object of interest is not always the group defined by the graphical presentation, but rather the associated \textbf{graphical complex} $X$. By this we mean the graphical presentation together with its two realizations: the thickened graphical complex $X_t$ and the non-thickened graphical complex $X_c$. From now on, we refer to these as the thickened and non-thickened realizations, respectively.
In these complexes, a \textbf{graphical cell} is defined by a connected component $\Gamma_i$ of $\Gamma$, together with its realization as a thick cell in the thickened graphical complex and as a cone in the non-thickened graphical complex.

\begin{rem}
\label{r:2-complex}
    For any $2$-complex $Y$ there is an associated graphical complex $X$ obtained by taking $\Theta$ as the $1$-skeleton $Y^{(1)}$ and $\Gamma$ as a union of cycle graphs $\Gamma_i$, one for each $2$-cell of $Y$.
    Note that if we think of each $2$-cell in $Y$ as a cone over its boundary cycle, then we obtain exactly the non-thickened graphical complex $X_c$. 
    On the other hand, a thick cell $\mathrm{Th}(F)$ associated to a $2$-cell $F$ in $Y$ is the union of $F$ and the set of $2$-cells attached along coverings of $\partial F$.
    Some specialists in the area believe that the thickened graphical complex $X_t$ can be defined in a way that identifies $X_t$ with $Y$, but the current definition significantly simplifies the proofs of basic properties.
\end{rem}

From now on, we will assume that $\Gamma$ is path-cycle extensible.
There is no loss of generality because we can always replace $\Gamma$ by the subgraph $\Gamma'$ given by the union of all immersed cycles in $\Gamma$. 
The graphical presentations $\langle f: \Gamma \rightarrow \Theta\rangle $ and $\langle f|_{\Gamma'}: \Gamma' \rightarrow \Theta \rangle$ define the same group, the same thickened graphical complex, while their non-thickened graphical complexes are homotopy equivalent.

Let $X$ be a graphical complex associated to a graphical presentation $G=\langle f:\Gamma \rightarrow \Theta \rangle$. We describe the graphical structure of its \textbf{universal cover} $\tilde{X}$. Fix an orientation of every edge in $\Theta$, for every edge $e\in E(\Theta)$, let $o(e)$ be its initial vertex, and $t(e)$ be its terminal vertex. Let $E_o(\Theta)$ be the collection of all oriented edges in $\Theta$, i.e. $E_o(\Theta):=\{(o(e),t(e)):e\in E(\Theta)\}$. Fix a spanning tree $T$ of $\Theta$.  Recall that every oriented edge $s\in E_o(\Theta)\setminus E_o(T)$ corresponds to a nontrivial element $[s]\in G$, and all such elements generate the whole $G$. Define a graph $\tilde{\Theta}$ as follow. Let $V(\tilde{\Theta}):=V(\Theta)\times G$ be its vertex set, and $E_o(\tilde{\Theta}):=E_o(\Theta)\times G$ be its edge set.  For every edge $e\in E_o(T)$, let the initial vertex of $(e,g)\in E_o(\tilde{\Theta})$ be $(o(e),g)\in V(\tilde{\Theta})$ and the terminal vertex be $(t(e),g) \in V(\tilde{\Theta})$ respectively. For every edge $s\in E_o(\Theta)\setminus E_o(T)$, define $o((s,g)):=(o(s),g)$ and $t((s,g)):=(t(s),g[s])$. Observe that $\tilde{\Theta}$ is a covering space of $\Theta$. Let $\tilde{\Gamma}=\Gamma \times G$. For each $i$, fix a vertex $v_i\in \Gamma_i$. Let $\tilde{f}:\tilde{\Gamma}\rightarrow \tilde{\Theta}$ be the map where the restriction $\tilde{f}|_{\Gamma_i\times \{g\}}$ is the unique lift of $f|_{\Gamma_i}$ that sends $v_i$ to $(f(v_i), g)$. 
Since $f$ is immersion of graphs, $\tilde{f}$ is also an immersion and $\langle \tilde{f}:\tilde{\Gamma} \rightarrow \tilde{\Theta} \rangle$ is a graphical presentation.
We define $\tilde{X}$ as the graphical complex associated to the graphical presentation $\langle \tilde{f}:\tilde{\Gamma} \rightarrow \tilde{\Theta} \rangle$. Note that the canonical maps $\pi_t:\tilde{X_t}\rightarrow X_t$ and $\pi_c:\tilde{X_c}\rightarrow X_c$ are covering maps.

\begin{pr}
    $\tilde{X_t}$, $\tilde{X_c}$ are simply connected.
\end{pr}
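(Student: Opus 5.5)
The plan is to identify $\tilde{\Theta}$ with the covering of $\Theta$ corresponding to the normal subgroup $N=\langle\!\langle \bigcup_i f_i(\pi_1(\Gamma_i,u_i))\rangle\!\rangle$. I will then show that attaching the lifted cells kills exactly $\pi_1(\tilde\Theta)$. The construction is the standard Cayley-graph-type covering: a vertex is a pair $(v,g)$, and crossing a non-tree edge $s$ multiplies the $G$-coordinate by $[s]$. The first step is to check that $\tilde\Theta$ is connected. Since $T$ lifts to copies $T\times\{g\}$, and the elements $[s]$ for non-tree edges generate $G$, any $(u,g)$ can be joined to $(u,1)$ by a path following a word in the $[s]^{\pm1}$.

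Next, I will compute $p_*\pi_1(\tilde\Theta,(u,1))$, where $p:\tilde\Theta\to\Theta$ is the projection. A closed path $\gamma$ at $u$ in $\Theta$ lifts to a path starting at $(u,1)$ whose final $G$-coordinate is the product of the $[s]^{\pm1}$ read along $\gamma$. Via the spanning-tree identification $\pi_1(\Theta,u)\cong F(E_o\setminus E_o(T))$, this product is exactly the image of $[\gamma]$ under the quotient map $q:\pi_1(\Theta,u)\to G$. Hence $\gamma$ lifts to a closed path if and only if $[\gamma]\in\ker q=N$, so $p_*\pi_1(\tilde\Theta)=N$. In particular $\tilde\Theta$ is the regular cover with deck group $G$ acting by left multiplication on the second coordinate.

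The main step is to show that $\pi_1(\tilde X_t)$ and $\pi_1(\tilde X_c)$ are trivial. By van Kampen, $\pi_1(\tilde X_t)$ is $\pi_1(\tilde\Theta)$ modulo the normal closure of the loops along which $2$-cells are attached, namely the images under $\tilde f$ of immersed cycles in the components $\Gamma_i\times\{g\}$, conjugated by connecting paths. Since $p_*$ is injective, it suffices to show that the $p_*$-images of these loops normally generate $N$ inside $\pi_1(\Theta,u)$. The lift of $\Gamma_i$ sending $v_i$ to $(f(v_i),g)$ exists because $f_i(\pi_1(\Gamma_i))\subset N$, using the lifting criterion with the connecting path $\gamma_i$ chosen compatibly. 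Its closed paths project to conjugates $\delta\, f(\gamma)\,\bar\delta$ by paths $\delta$ that are projections of paths in $\tilde\Theta$. Conversely, every conjugate $h f_i(c) h^{-1}$ with $h\in\pi_1(\Theta,u)$ lifts to a closed loop in $\tilde\Theta$: lift the path $h\gamma_i$ from $(u,1)$, and the lift of $f\circ c$ at its endpoint lies in the copy $\Gamma_i\times\{g\}$ for the appropriate $g$, by uniqueness of lifts. So the relator loops in $\tilde X$ normally generate $\pi_1(\tilde\Theta)$, and $\pi_1(\tilde X_t)=1$. The point I expect to need most care is this bookkeeping between the chosen base vertices $v_i$, the paths $\gamma_i$, and which lift lands in which copy $\Gamma_i\times\{g\}$. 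The cleanest way to handle it is to prove that the family $\{\tilde f|_{\Gamma_i\times\{g\}}\}_g$ is exactly the set of all lifts of $f|_{\Gamma_i}$, i.e.\ that $g\mapsto$ (lift sending $v_i\mapsto (f(v_i),g)$) is a bijection onto lifts. This follows because lifts of a connected space are determined by the image of one point, and the fibre over $f(v_i)$ is $\{f(v_i)\}\times G$.

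For $\tilde X_c$, I will use that each cone on $\Gamma_i\times\{g\}$ is contractible. By van Kampen, attaching $Cone(\Gamma_i)$ along $\tilde f$ kills precisely the images of $\pi_1(\Gamma_i\times\{g\})$, which are generated by immersed cycles once $\Gamma$ is path-cycle extensible (and in any case by all closed paths, which suffices). Gluing infinitely many cones is handled by a direct limit over finite subcomplexes, since $\pi_1$ commutes with such directed unions. This gives the same quotient as in the thickened case, so $\tilde X_c$ is also simply connected.
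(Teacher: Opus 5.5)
Your proof is correct, but it is organized differently from the paper's.

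The paper argues downstairs. It takes a closed path in $\tilde\Theta$ and rewrites its projection as a product of tree--edge--tree loops $p_1\cdots p_n$. Lifting these one at a time gives the endpoint $(v,g[s_1]\cdots[s_n])$, so $[s_1]\cdots[s_n]=1$ in $G$ and the projection is null-homotopic in $X_t$. It then uses that $\pi_t\colon\tilde X_t\to X_t$ is a covering map, hence injective on $\pi_1$. $\tilde X_c$ is handled by the remark that it has the same fundamental group as $\tilde X_t$.

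That endpoint computation is exactly your inclusion $p_*\pi_1(\tilde\Theta)\subseteq N$. Where the paper then appeals to the covering property, you instead prove directly, via van Kampen, that the attaching loops of $\tilde X_t$ normally generate $\pi_1(\tilde\Theta)$, and you treat the cones separately.

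The paper's route is shorter. However, it rests on facts it states without proof: that $\pi_t,\pi_c$ are coverings, that $\pi_1(X_t)\cong G$, and that $\tilde X_t$ and $\tilde X_c$ have the same $\pi_1$. It also leaves the connectedness of $\tilde\Theta$ implicit. Your route is self-contained. In particular, your observation that $g\mapsto\tilde f|_{\Gamma_i\times\{g\}}$ is a bijection onto all lifts of $f|_{\Gamma_i}$ is precisely the content hidden in the paper's claim that $\pi_t$ is a covering.

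Two points to phrase carefully. First, the reduction ``the $p_*$-images normally generate $N$ inside $\pi_1(\Theta,u)$'' is by itself too weak. What you need is that the normal closure taken inside $\pi_1(\tilde\Theta)$ maps onto $N$. Your ``Conversely'' step does give this: it realizes each $hf_i(c)h^{-1}$, $h\in\pi_1(\Theta,u)$, as the image of a loop $\tilde\delta\,\tilde c\,\overline{\tilde\delta}$ with $\tilde c$ a closed path in a copy $\Gamma_i\times\{g\}$. Second, $\tilde X_t$ has $2$-cells only along immersed cycles. You should note that every closed path in a graph lies in the normal closure of immersed cycles (by cyclic reduction), so such $\tilde c$ are indeed killed.
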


\begin{proof}
    Since $\tilde{X}_t$ and $\tilde{X}_c$ have the same fundamental group, it suffices to show that any closed path in $\tilde{X}_t$ is homotopically trivial.
    Fix a vertex $v\in V(\Theta)$ and $v_0=(v,g)\in \pi^{-1}(v)$. Let $\tilde{p}: C\rightarrow \tilde{\Theta}$ be a closed path in $\tilde{\Theta}$ starting at $v_0$, and let $p:=\pi\circ \tilde{p}$. Without loss of generality, assume the closed path $p$ represents the element $[s_1]\cdots[s_n]$ in $\pi_1(\Theta,v)$, where $s_i \in E_o(\Theta)$. Fix $p_i$ as a path which goes from $v$ to $o(s_i)$ in $T$, from $o(s_i)$ to $t(s_i)$ along $s_i$, and from $t(s_i)$ to $v$ in $T$. Then $p$ is homotopically equivalent to the concatenation $p_1\cdots p_n$. By the unique lifting property, there is a unique lift $\tilde{p}_1$ of $p_1$ starting from $v_0$ and ending at $v_1:=(v,g[s_1])$. Lift $p_i$ inductively, as a path from $v_{i-1}$ to $v_i$, then $v_n=(v,g[s_1]\cdots[s_n])$. Since $p$ is homotopically equivalent to $p_1\cdots p_n$, $\tilde{p}$ is homotopically equivalent to $\tilde{p_1}\cdots \tilde{p_n}$, in particular, $v_n=v_0$, i.e., $[s_1]\cdots[s_n]$ is trivial. Hence, $p$ is homotopically trivial in $X_t$, and since $\tilde{X_t}$ is a covering space, $\tilde{p}$ is also homotopically trivial. This concludes the proof that $\tilde{X_t}$ is simply connected.
\end{proof}

Note that the presentation $\langle\tilde{f}: \tilde{\Gamma}\rightarrow \tilde{\Theta}\rangle$ of the universal cover $\tilde{X}$ is not necessarily simplified, we denote by $\langle\tilde{f}: \tilde{\Gamma}^{+}\rightarrow \tilde{\Theta}\rangle$ its simplification, and call the associated graphical complex $\tilde{X}^{+}$ the \textbf{simplified universal cover} of $X$. It is clear that both $\tilde{X}^{+}_t,\tilde{X}^{+}_c$ are also simply connected.

\section{Graphical small cancellation}\label{sec:gsc}
In this section, we first recall some basic definitions related to small cancellation, then we define small cancellation conditions for graphical complexes.
We use definitions that are slightly modified from those in, for example, \cite{OsajdaPrytula,Helly}. 

The following definition is crucial in small cancellation theory.
A \textbf{piece} in a 2-complex $Y$ is a path $P\rightarrow Y$ that factors in two essentially distinct ways through 2-cells, i.e., there are 2-cells $R\rightarrow Y$ and $R'\rightarrow Y$ such that $P\rightarrow Y$ factors both as $P\rightarrow \partial R \rightarrow Y$ and $P\rightarrow \partial R' \rightarrow Y$, and there is no isomorphism $\partial R\rightarrow \partial R'$ making the following diagram commute:
\[
\begin{tikzcd}
    P \arrow[r] \arrow[d] & \partial R \arrow[d] \arrow[ld]\\
    \partial R' \arrow[r] & Y
\end{tikzcd}
\]

Similarly, a \textbf{piece} in a graphical complex $X$ is also an immersed path $\gamma: P\rightarrow \Theta$ which can factor through graphical cells in two essentially different ways, that is, $\gamma$ can factor as $P\rightarrow \Gamma_i \rightarrow \Theta$ and $P\rightarrow \Gamma_j \rightarrow \Theta$, and there is no isomorphism $\Gamma_i \rightarrow \Gamma_j$ such that the following diagram commutes:
\[
\begin{tikzcd}
    P \arrow[r] \arrow [d] & \Gamma_i \arrow[d]\arrow[ld] \\ 
    \Gamma_j \arrow[r] & \Theta
\end{tikzcd}
\]

\begin{rem}
There are two different ways of generalizing the definition of pieces to graphical complexes (see \cite[Definition 1.2, Definition 1.3]{Gruber-graphical}).
Here we use the original definition by Gromov \cite{Gromov-random}. 
Unlike the other definition, it allows groups defined by graphical small cancellation presentation to have torsion, which is relevant for our results.
\end{rem}

In the classical case, we are interested in combinatorial maps between $2$-complexes, specifically, the so called \textbf{reduced}
\footnote{The definition given here was suggested by a referee of \cite{Helly} in the graphical setting. When restricted to the classical setting, it is equivalent to the definition given in \cite{MW}.} 
maps. 
A map between $2$-complexes $Y$ and $X$ is \textbf{reducible} along a piece $P\rightarrow Y$ if $P\rightarrow Y$ can factor through $2$-cells in two essentially distinct ways as $P\rightarrow \partial R_Y\rightarrow Y$ and $P\rightarrow \partial R'_Y\rightarrow Y$, while the corresponding factorizations of $P\rightarrow X$ as $P\rightarrow R_Y \rightarrow R_X \rightarrow X$ and $P\rightarrow R'_Y \rightarrow R'_X\rightarrow X$ are essentially the same, i.e., there is no isomorphism $\partial R_Y\rightarrow \partial R'_Y$ making the outer diagram commute, while there is an isomorphism $\partial R_X\rightarrow \partial R'_X$ making the inner diagram commute:

\[
\begin{tikzcd}
    P \arrow[rd, equal] \arrow [rrr] \arrow[ddd] &  &  &  \partial R_Y \arrow[ddd]\arrow[ld] \\
     & P \arrow[r] \arrow [d] & \partial R_X  \arrow[d]\arrow[ld] &  \\ 
     & \partial R'_X \arrow[r] & X & \\
    \partial R'_Y \arrow[rrr] \arrow[ur] &  &   & Y \arrow[ul]
\end{tikzcd}
\]
A map $Y\rightarrow X$  is \textbf{reduced} if it is not reducible along any piece $P\rightarrow Y$.

From now on, we consider combinatorial maps from $2$-complexes to graphical complexes. A map from a $2$-complex naturally targets either a thickened or a non-thickened realization, and only rarely can the same complex be mapped to both. In this paper, we focus on maps to thickened graphical complexes, since this setting is necessary for formulating $T(q)$ small cancellation condition and for obtaining useful connections between classical and graphical small cancellation theory.

A map $Y\to X_t$ from a $2$-complex $Y$ to a thickened graphical complex $X_t$ is \textbf{reducible} along a piece $P\rightarrow Y$, if $P\rightarrow Y$ can factor in two essentially different ways as $P\rightarrow \partial R\rightarrow Y$ and $P\rightarrow \partial R'\rightarrow Y$, with the corresponding factorizations of $P\rightarrow X_t$ as $P\rightarrow R \rightarrow \mathrm{Th}(\Gamma_{i})\rightarrow X_t$ and $P\rightarrow R' \rightarrow \mathrm{Th}(\Gamma_{j})\rightarrow X_t$ being essentially the same, i.e., there is no isomorphism $\partial R\rightarrow \partial R'$ such that the outer diagram commutes, yet there is an isomorphism $\Gamma_{i}\rightarrow \Gamma_{j}$ making the inner diagram commute:

\[
\begin{tikzcd}
    P \arrow[rd, equal] \arrow [rrr] \arrow[ddd] &  &  &  \partial R \arrow[ddd]\arrow[ld] \\
     & P \arrow[r] \arrow [d] & \Gamma_{i} \arrow[d]\arrow[ld] &  \\ 
     & \Gamma_{j} \arrow[r] & X_t & \\
    \partial R' \arrow[rrr] \arrow[ur] &  &   & Y \arrow[ul]
\end{tikzcd}
\]
A map $Y\rightarrow X_t$  is \textbf{reduced} if it is not reducible along any piece $P\rightarrow Y$. 

A \textbf{spherical diagram} $D$ is a $2$-sphere $\mathbb{S}^2$ with a structure of a finite combinatorial $2$-complex.
A \textbf{disc diagram} $D$ is a contractible subspace of the plane $\mathbb{R}^2$ with the structure of a finite combinatorial $2$-complex. We say that a disc diagram $D$ is \textbf{nonsingular} if it is homeomorphic to a closed $2$-cell and \textbf{singular} otherwise. 
The \textbf{boundary} $\partial D$ of a disc diagram $D$ is the attaching map of the $2$-cell that contains the point $\infty$ when we regard $\mathbb{S}^2=\mathbb{R}^2\cup \{\infty\}$.
A \textbf{spur} is an edge in the boundary $\partial D$ that has a vertex of degree 1.
If a disc diagram $D$ is singular, then it is either trivial, or it consists of a single $1$-cell joining two $0$-cells, or it contains a \textbf{cut} $0$-\textbf{cell (or cut-vertex)}, i.e.~a $0$-cell $v$ of $D$ such that $D\setminus\{v\}$ is disconnected. Components of $D$ with all cut-vertices removed are called \textbf{cut-components}.
A \textbf{diagram in} a 2-complex $Y$ is a map from such a spherical or disc diagram $D$ to $Y$.
A \textbf{diagram in} a graphical complex $X$ is a map from a spherical or disc diagram $D$ to either the thickened complex $X_t$ or the non-thickened complex $X_c$. 

A diagram $D$ in a $2$-complex $Y$ is called \textbf{reduced} if the map $D\to Y$ is reduced.
Analogously a diagram $D$ in a graphical complex $X$ is called \textbf{reduced} if the map $D\to X_t$ is reduced.

 \begin{lm}[Lyndon-van Kampen Lemma]\label{l:vank}
 	Let $X$ be a graphical complex and let $C \to \Theta$ be a closed path which is null-homotopic in $X_t$. Then
 	\begin{enumerate} 
 		
 		\item \label{l:vk1} there exists a disc diagram $D\to X_t$ such that the path $C$ factors as $C \to \partial D \to X_t$, and $C \to \partial D$ is an isomorphism,
 		
 		\item \label{l:vk2} if a diagram $D \to X_t$ is not reduced, then there exists a diagram $D_1 \to X_t$ with smaller area and the same boundary in the sense that there is a commutative diagram:
 		
\[
    \begin{tikzcd}
    \partial D_1 \arrow[r,"\cong"] \arrow[rd] & \partial D \arrow[d] \\
     & X_t
    \end{tikzcd}
\]
 		
 		\item \label{l:vk3} any minimal area diagram $D \to X_t$ such that $C$ factors as $C \xrightarrow{\cong} \partial D \to X_t$ is reduced.
 	\end{enumerate}
 \end{lm}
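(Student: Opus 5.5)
We follow the classical van Kampen argument, adapted to thick cells, and treat the three parts in order.

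For part~(\ref{l:vk1}), we use that $C\to\Theta$ is null-homotopic in $X_t$. Since $X_t$ is obtained from $\Theta$ by attaching $2$-cells along the immersed cycles $C'\to\Gamma_i\to\Theta$, the class of $C$ in $\pi_1(\Theta)$ lies in the normal closure of the classes of these attaching cycles. We write $[C]=\prod_{k=1}^m g_k[r_k]^{\pm1}g_k^{-1}$ in the free group $\pi_1(\Theta)$, where each $r_k$ is the boundary cycle of a $2$-cell of $X_t$. We then form a ``bouquet of lollipops'': a tree of tails $g_k$, with a polygon attached at the end of each tail, labelled by $r_k^{\pm1}$. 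This is a planar complex mapping to $X_t$, and its boundary reads the unreduced word $\prod g_k r_k^{\pm1} g_k^{-1}$. Since this word freely reduces to the reduced form of $C$, we fold adjacent boundary edges with inverse labels, Stallings-style, one pair at a time. Each fold preserves planarity and contractibility, since we fold along the outer boundary only. The result is a disc diagram whose boundary path is $C$. If $C$ itself is not reduced as a path, we insert spurs or backtracks so that $C\to\partial D$ is an isomorphism; the standard trick is to fold only as many times as needed to match $C$ exactly, rather than to full free reduction. This bookkeeping step is the main technical nuisance.

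For part~(\ref{l:vk2}), suppose $D\to X_t$ is reducible along a piece $P\to D$. Then there are $2$-cells $R,R'$ of $D$ containing $P$ in their boundaries, and they map into the same thick cell $\mathrm{Th}(\Gamma_i)$ compatibly with a single isomorphism of $\Gamma_i$. We cut $D$ open along $P$ and remove the open cells $R$ and $R'$, obtaining a region bounded by the closed path $\partial R\setminus P$ followed by $\partial R'\setminus P$, both read in $D$. Because both factor through $\Gamma_i$ via the same lift, this closed path maps to a closed path in $\Gamma_i$, and we reduce it. Here lies the main obstacle. Unlike the classical case, this new boundary need not be the trivial cycle; it is a closed path in the graph $\Gamma_i$. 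It is null-homotopic in the thick cell, since its reduction is a product of immersed cycles of $\Gamma_i$, each bounding a single $2$-cell of $X_t$, or it is trivial. We expect that either the two cells fold together, or the region can be filled by a single $2$-cell along the immersed cycle obtained by reducing, or else by zero cells after folding. In this way the area drops by at least one. We would handle this by cases: if the reduced path is trivial, we fold the region away, which uses $2$ fewer cells; otherwise it is an immersed cycle in $\Gamma_i$ (after cyclic reduction, with tails folded), and we glue one $2$-cell, which uses 1 fewer cell. The boundary of $D$ is unchanged throughout, giving the commutative triangle.

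Part~(\ref{l:vk3}) follows immediately. A minimal-area diagram with $C\cong\partial D$, which exists by part~(\ref{l:vk1}), cannot be reducible, since otherwise part~(\ref{l:vk2}) would produce a diagram with the same boundary and smaller area.
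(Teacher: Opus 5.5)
The paper states this lemma without proof; it is quoted as the standard Lyndon--van Kampen lemma, and your outline is that standard argument. You correctly isolate the one point where the thickened complex matters. After deleting $R$, $R'$ and the open piece, the boundary of the hole lifts to a closed path in $\Gamma_i$: lift $\partial R\setminus P$ directly, and lift $\partial R'\setminus P$ through the isomorphism $\Gamma_j\to\Gamma_i$ from the definition. By simplification $j=i$, but this automorphism of $\Gamma_i$ over $\Theta$ need not be the identity, so your phrase ``the same lift'' should be read this way. The cyclic reduction of that closed path is either trivial or an immersed cycle of $\Gamma_i$, and every immersed cycle of $\Gamma_i$ bounds a $2$-cell of $X_t$ by construction. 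So the area drops by at least one.

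Two bookkeeping remarks. First, a fold of consecutive inverse edges preserves contractibility only when their far endpoints are distinct. Otherwise you either delete a spur, or you discard the subdiagram the two edges enclose, which would otherwise become a sphere. Second, in (1) you cannot reach a non-reduced $C$ by folding ``fewer times'', since $C$ may contain backtracks absent from $\prod_k g_kr_k^{\pm1}g_k^{-1}$. Instead, fold to the free reduction and then attach spurs for the backtracks of $C$.

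The genuine omission is the case $R=R'$. The definition only requires two essentially different factorizations $P\to\partial R\to D$ and $P\to\partial R'\to D$. These may be two occurrences of $P$ on the boundary of a single $2$-cell, namely a face that is an annulus slit along $P$. Deleting that cell and $P$ leaves an annulus, not a disc, so your filling step does not apply.

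If the two lifts of $P$ to $\Gamma_i$ agree, this is repairable. The outer boundary loop of the annulus then lifts to a closed path of $\Gamma_i$, and the disc it bounds in $D$ contains $R$ and at least one more cell. But if the lifts differ by a nontrivial automorphism of $\Gamma_i$ over $\Theta$, no reduction need exist.

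For example, let $\Theta$ be two triangles $L_1,L_2$ joined by an edge $b$. Let $\Gamma_1\to\Theta$ be the connected double cover in which both triangles lift to hexagons, and let $\Gamma_2=L_2$. The closed path $bL_2b^{-1}L_1$ lifts to an embedded $8$-cycle of $\Gamma_1$. Draw its $2$-cell as an annulus slit along $b$, and fill the inner triangle with the $2$-cell of $\Gamma_2$. This gives a diagram with boundary $L_1$ and area $2$. It has minimal area, since $L_1$ is not null-homotopic in $\Theta$ and no $2$-cell of $X_t$ is attached along $L_1$. Yet it is reducible along $b$ in the literal sense, because the two lifts of $b$ are exchanged by the deck involution.

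So your proof establishes the lemma for reductions between two distinct $2$-cells. This is the only kind possible in the classical setting, and the kind used later in the paper. You should state that restriction explicitly rather than leave it implicit.
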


\begin{pr}\label{reducible_spherical}
    Let $S$ be a spherical diagram in $X_t$ and let $C \to S$ be an injective nontrivial closed path. Then two subcomplexes $D_1,D_2$ of $S$ bounded by $C$ are disc diagrams in $X_t$.

    If $S$ is reduced in $X_t$ then $D_1,D_2$ are reduced. If $S$ is reducible, then there exists $C$ that either one of $D_1,D_2$ is reducible, or $D_1$ and $D_2$ both consist of a single cell, mapped to the same thick cell of $X_t$.
\end{pr}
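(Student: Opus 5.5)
The plan is to cut the sphere along $C$ using the Jordan curve theorem, and then compare the pieces that witness reducibility with the curve $C$.

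\emph{First part.} Regard $S$ as a finite combinatorial $2$-complex homeomorphic to $\mathbb{S}^2$. The image of the injective closed path $C\to S$ is an embedded circle in the $1$-skeleton. By the Jordan curve theorem it separates $S$ into two open discs. Their closures $D_1,D_2$ are subcomplexes, each homeomorphic to a closed disc with boundary $C$. Choosing a point of $S$ in the interior of a $2$-cell of $D_2$ as $\infty$ embeds $D_1$ in $\mathbb{R}^2$, and symmetrically for $D_2$. Restricting $S\to X_t$ then makes $D_1,D_2$ disc diagrams in $X_t$.

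\emph{Reducedness passes to the halves.} Every piece $P\to D_k$ factoring through two $2$-cells of $D_k$ is also such a piece in $S$, since the $2$-cells of $D_k$ are $2$-cells of $S$ and the factorizations are compatible. A reduction of $D_k$ along $P$ would therefore be a reduction of $S$ along $P$. Hence if $S$ is reduced, so are $D_1$ and $D_2$.

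\emph{Converse direction.} Assume $S$ is reducible along a piece $P\to S$, with two essentially distinct $2$-cells $R,R'$ of $S$ both mapping into the same thick cell $\mathrm{Th}(\Gamma_i)$ compatibly. I would choose $C$ according to the following cases.
\begin{enumerate}
\item \emph{$R\neq R'$ as cells of $S$.} Choose an injective cycle $C$ in the $1$-skeleton of $S$ with $R$ and $R'$ on the same side, say in $D_1$. For instance, take $C=\partial R''$ for a $2$-cell $R''$ distinct from $R,R'$ whose boundary is embedded, so that $D_2=R''$. Then $P\to D_1$ still factors through both $R$ and $R'$, and the same commuting diagrams show that $D_1$ is reducible.
\item \emph{Only $R$ and $R'$ exist, or no such $R''$ is available.} This happens when $S$ has exactly two $2$-cells. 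Take $C=\partial R$, which equals $\partial R'$ as a subcomplex. Then $D_1=R$ and $D_2=R'$ are single cells mapping to the same thick cell, which is the second alternative of the statement.
\item \emph{$R=R'$ as cells but the two factorizations of $P$ differ by a nontrivial rotation or reflection of $\partial R$.} The reduction again lives inside a single cell. Any $C$ that keeps $R$ in $D_1$ makes $D_1$ reducible.
\end{enumerate}

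\emph{Main obstacle.} The hard step is guaranteeing an injective $C$ that keeps both reducing cells on one side. Boundaries of $2$-cells in $S$ need not be embedded, since attaching maps are only immersions. In general I would instead take $C$ to be the boundary of a small regular neighbourhood of $R\cup R'$ in the cell structure: the frontier of the union of closed cells meeting $R\cup R'$, or of its complement. I would check that this is an embedded circle unless the complement is empty or a single cell, and that degenerate situation reduces to the two-cell alternative.
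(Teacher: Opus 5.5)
There is a genuine gap in the converse direction, at the step you flag yourself. Your first two parts agree with the paper, which treats them as immediate from the definitions. (Strictly, it is the Schoenflies theorem, not just the Jordan curve theorem, that makes $D_1,D_2$ discs.)

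You never show that a suitable $C$ exists when $S$ has a third $2$-cell but none with embedded boundary. Case (2) just asserts that this forces $S$ to have exactly two $2$-cells. That is true (e.g.\ via leaf blocks of $S^1$), but you do not prove it, and your fallback does not supply the argument:
\begin{itemize}
\item The frontier of the union $K$ of closed cells meeting $R\cup R'$ need not be one embedded circle. If $S\setminus K$ is disconnected, it is a union of several circles; this happens, for instance, when two cells of $K$ touching at a vertex off $R\cup R'$ cut off a pocket of cells not meeting $R\cup R'$.
\item The ``degenerate'' case $S\setminus K=\emptyset$ only says that every cell meets $R\cup R'$. That is compatible with $S$ having many cells, so it does not reduce to the two-cell alternative.
\end{itemize}

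The missing observation is that you do not need a whole cell on the far side of $C$. It suffices that $C$ misses one edge $e$ of $P$.
\begin{itemize}
\item Since $e$ lies on both $\partial R$ and $\partial R'$, and each edge of the sphere has exactly two sides, $R$ and $R'$ are the only cells bordering $e$. So the open edge $e$ and the open cells $R,R'$ lie in the same component of $S\setminus C$, and $P$ factors through both inside one $D_k$.
\item Such a $C$ exists whenever there is a third cell $R''$. Its boundary walk is an immersed closed path using no edge of $P$. An immersed closed path cannot have image in a forest: at a vertex farthest from a base vertex, both incident edges would map to the same edge. Hence that image contains an embedded cycle, which you can take as $C$.
\item If there is no third cell, then $V-E+2=2$ gives $b_1(S^1)=1$. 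Every edge lies on an immersed boundary walk, so there are no pendant trees. Every edge has two sides, so $S^1$ is a single embedded cycle along which $R$ and $R'$ are glued. This is the two-cell alternative.
\end{itemize}

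This is exactly the paper's route. It takes an injective closed path not containing $P$, and otherwise concludes that $S$ is two cells glued along their whole boundary. The count above is what justifies that last step: if every embedded cycle passes through $e$, then $S^1\setminus e$ is a forest, so $b_1(S^1)\le 1$ and $S$ has at most two $2$-cells.
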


\begin{proof}
    By definition of spherical and disc diagrams, it is clear that $D_1,D_2$ are disc diagrams in $X_t$ for any such path $C$. By definition of reduced diagram, it is also clear that $S$ being reduced implies that $D_1,D_2$ are reduced.

    If $S$ is reducible along a piece $P$, and there exists an injective closed path $C\to S$ that does not contain $P$, then one of $D_1,D_2$ contains $P$ as a piece, thus it is reducible.
    If every injective closed path $C\to S$ contains $P$, then $S$ has no injective closed path avoiding $P$. Hence $S$ consists of two $2$-cells glued along their whole boundary. Since the reducibility occurs along $P$, these two cells map to the same thick cell of $X_t$.
\end{proof}

\begin{df}
    For $p,q\in \mathbb{N}^+$, we say the graphical complex $X$ associated to a graphical presentation $G=\langle f:\Gamma \rightarrow \Theta\rangle$ satisfies 
    \begin{itemize}
        \item the \textbf{C(p) small cancellation condition} if there is no immersed cycle $C\rightarrow\Gamma$ such that $C\rightarrow \Gamma \rightarrow \Theta$ is the concatenation of less than $p$ pieces;
        \item the \textbf{T(q) small cancellation condition} if for every reduced disc diagram $D\rightarrow X_t$, there is no interior vertex of $D$ whose degree is greater than 2 and less than $q$. 
    \end{itemize}  
    In particular $C(p)$--$T(q)$ denotes complex or presentation that satisfies both $C(p)$ and $T(q)$. 
\end{df}

\begin{df}\label{df:essentially}
    Let $X$ be a $C(p)$ graphical complex, and $\Gamma_i$ be a graphical cell.
    We say that $\Gamma_i$ is \textbf{essentially} $C(p)$ if there is an immersed cycle $C\rightarrow\Gamma_i$ such that $C\rightarrow \Gamma_i \rightarrow \Theta$ is the concatenation of exactly $p$ pieces. 
\end{df}

Thus, in a $C(p)$ complex, an essentially $C(p)$ cell is one in which the $C(p)$ lower bound is sharp.

A 2-complex $Y$ is $C(p)$--$T(q)$ if its associated graphical presentation (see Remark \ref{r:2-complex}) is.  

\begin{pr}\label{reduceddiagram}
  If $X$ is a $C(p)$--$T(q)$
  graphical complex and $D \to X_t$ is a reduced disc or spherical diagram, then $D$
  is a $C(p)$--$T(q)$ diagram.
\end{pr}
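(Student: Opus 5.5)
We need to show that $D$, regarded as a $2$-complex with its own $2$-cells, satisfies $C(p)$ and $T(q)$. Recall that a $2$-complex is $C(p)$--$T(q)$ when its associated graphical presentation (Remark~\ref{r:2-complex}) is. For $D$ this presentation has one cycle graph $\partial R$ for each $2$-cell $R$ of $D$, and the pieces are the paths in $D$ that factor through boundaries of $2$-cells in two essentially distinct ways. We treat the two conditions separately.

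\textbf{$C(p)$.} Fix a $2$-cell $R\to D$ and suppose its boundary cycle $\partial R\to D$ is covered by fewer than $p$ pieces of $D$. We want each such piece to map to a piece of $X$. Write a piece as $P\to\partial R\to D$ and $P\to\partial R'\to D$, with the two factorizations essentially distinct in $D$. Composing with $D\to X_t$, the cell $R$ lands in a thick cell $\mathrm{Th}(\Gamma_i)$ and $R'$ in $\mathrm{Th}(\Gamma_j)$. Hence $P\to\Theta$ factors through $\Gamma_i$ and through $\Gamma_j$.

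If these two factorizations were essentially the same, that is, if some isomorphism $\Gamma_i\to\Gamma_j$ made the inner diagram commute, then $D\to X_t$ would be reducible along $P$. This is impossible since $D$ is reduced. So $P$ is a piece of $X$. The boundary $\partial R$ maps to an immersed cycle in $\Gamma_i$: the $2$-cells of $X_t$ are attached along immersed cycles of $\Gamma_i$, and the combinatorial map $R\to X_t$ sends $R$ onto one of them. Therefore $\partial R\to\Gamma_i\to\Theta$ is a concatenation of fewer than $p$ pieces, contradicting $C(p)$ for $X$.

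One subtlety needs checking here. Pieces of $X$ are required to be immersed paths, and the image of $P$ is a subpath of an immersed cycle, so it is immersed. The other degenerate case is a cycle covered by zero pieces, i.e.\ a cell with no piece at all along its boundary. This case is vacuous, because fewer than $p$ pieces includes this situation only when $\partial R$ is itself covered, and we require an actual covering of $\partial R$ by pieces.

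\textbf{$T(q)$.} For a disc diagram this is immediate: $D\to X_t$ is itself a reduced disc diagram, so by the definition of $T(q)$ for $X$ no interior vertex of $D$ has degree in $[3,q)$.

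For a spherical diagram $S$ every vertex is interior. Take a vertex $v$ of degree $3\le d<q$. We would like to find an injective closed path $C\to S$ avoiding $v$, so that $v$ lies in the interior of one of the two discs $D_1,D_2$ bounded by $C$. Since $S$ is reduced, Proposition~\ref{reducible_spherical} says $D_1$ and $D_2$ are reduced disc diagrams, and $v$ then has the same degree in the disc containing it. This contradicts $T(q)$ for $X$.

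\textbf{Main obstacle.} The difficulty is producing such a $C$. The natural choice is the boundary of a small neighbourhood of $v$, namely the boundary of the union of the $2$-cells containing $v$, or any cycle around that star. This need not be injective when cells touch themselves or one another. There is also a degenerate case in which $S$ consists only of cells incident to $v$, so no cycle avoids $v$. To handle this, I would first try taking $C$ to be an injective cycle in the $1$-skeleton of $S\setminus\{v\}$ separating $v$ from some other vertex. Such a cycle exists because $S\setminus\{v\}$ is an open disc whose $1$-skeleton is nonempty when $d\ge3$. Failing that, I would argue directly: when no such cycle exists, the configuration around $v$ is tiny, and its degree-$d$ link yields a reducibility or a short cycle contradicting $C(p)$.
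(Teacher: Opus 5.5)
Your $C(p)$ argument is the paper's. Reducedness of $D\to X_t$ makes every piece of $D$ map to a piece of $X$, so a $2$-cell of $D$ covered by fewer than $p$ pieces would give an immersed cycle in some $\Gamma_i$ covered by fewer than $p$ pieces.

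The $T(q)$ part has a genuine gap. You take the definition to be that the presentation associated to $D$ is $T(q)$. Under that definition you must show that for \emph{every} reduced disc diagram $D'\to D_t$, no interior vertex of $D'$ has degree in $[3,q)$. In the disc case you only inspect the vertices of $D$ itself. That argument is correct, but it covers only the single diagram $D'=D$; arbitrary reduced diagrams over $D$ are never handled.

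In the spherical case the argument is not completed even for the vertices of $S$. A nonempty $1$-skeleton of $S\setminus\{v\}$ need not contain a cycle. For example, when every $2$-cell of $S$ contains $v$, the graph spanned by the other vertices can be a tree, so no injective closed path avoids $v$. Your fallback ``argue directly'' is not an argument.

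The missing idea is that reducedness is preserved under composition: if $D'\to D$ is reduced and $D\to X_t$ is reduced, then $D'\to D\to X_t$ is reduced. This is the entire $T(q)$ half of the paper's proof. To see it, suppose the composite is reducible along a piece $P$ of $D'$ through cells $R,R'$. If the images of $R,R'$ in $D$ give essentially the same factorization of $P\to D$, then $D'\to D$ is already reducible along $P$. Otherwise $P\to D$ is a piece of $D$ whose two factorizations become essentially the same in $X_t$, so $D\to X_t$ is reducible.

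Consequently the $T(q)$ condition of $X$ applies directly to every reduced disc diagram over $D$, for disc and spherical $D$ alike, and no separating cycle in $S$ is needed. Your ``main obstacle'' comes from replacing the $T(q)$ condition for $D$ by a degree bound on the vertices of $D$ itself.
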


\begin{proof}
By the definition of reduced diagram, each piece in $D$ maps to a piece in $X_t$, consequently $D$ has to satisfy $C(p)$.
Moreover if $D'\to D$ is a reduced disc diagram, then $D'\to D \to X_t$ is also reduced, thus $D$ has to satisfy the same $T(q)$ as $X_t$.
\end{proof}

\begin{lm}
 If $X$ is a $C(p)$--$T(q)$
  graphical complex then its simplified universal cover $\tilde{X}^{+}$ is also a $C(p)$--$T(q)$
  graphical complex.
\end{lm}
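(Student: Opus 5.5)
The plan is to check the two conditions separately, using that the covering map $\pi_t:\tilde X_t\to X_t$ is a combinatorial local isomorphism. The presentation $\tilde f:\tilde\Gamma\to\tilde\Theta$ restricts on each component $\Gamma_i\times\{g\}$ to a lift of $f|_{\Gamma_i}$. Passing to the simplification $\tilde\Gamma^{+}$ only identifies components that are literally the same graphical cell. So every graphical cell of $\tilde X^{+}$ is a lift of some $\Gamma_i$, with the same domain graph and the same immersed cycles.

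\textbf{Step 1: $C(p)$.} Take a piece $\tilde\gamma:P\to\tilde\Theta$ of $\tilde X^{+}$, factoring through two essentially distinct cells $\tilde\Gamma_a,\tilde\Gamma_b$ that lift $\Gamma_i,\Gamma_j$. I claim $\pi\circ\tilde\gamma$ is a piece of $X$.

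If it were not, there would be an isomorphism $\phi:\Gamma_i\to\Gamma_j$ commuting with the maps from $P$ and to $\Theta$. Since $\tilde\Gamma_a\cong\Gamma_i$ and $\tilde\Gamma_b\cong\Gamma_j$ as graphs, $\phi$ gives an isomorphism $\tilde\phi:\tilde\Gamma_a\to\tilde\Gamma_b$. The two maps $\tilde\Gamma_a\to\tilde\Theta$ (namely $\tilde f|_{\tilde\Gamma_a}$ and $\tilde f|_{\tilde\Gamma_b}\circ\tilde\phi$) are then both lifts of the same map $\Gamma_i\to\Theta$. They agree on the image of $P$, which is nonempty, and $\tilde\Gamma_a$ is connected, so by uniqueness of lifts they agree everywhere. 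This contradicts essential distinctness in $\tilde X^{+}$, which is simplified.

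Consequently, an immersed cycle $C\to\tilde\Gamma_a$ written as a concatenation of fewer than $p$ pieces projects to an immersed cycle in $\Gamma_i$ (since $\tilde\Gamma_a\cong\Gamma_i$) that is a concatenation of fewer than $p$ pieces of $X$. This is impossible in a $C(p)$ complex.

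\textbf{Step 2: $T(q)$.} Let $D\to\tilde X^{+}_t$ be a reduced disc diagram. Compose it with the covering map to get $D\to X_t$; the thick cells of $\tilde X^{+}_t$ map isomorphically onto thick cells of $X_t$. I claim $D\to X_t$ is still reduced.

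Suppose it were reducible along a piece $P\to D$, with 2-cells $R,R'$ of $D$ mapping to thick cells $\mathrm{Th}(\tilde\Gamma_a),\mathrm{Th}(\tilde\Gamma_b)$ upstairs. Reducibility downstairs gives an isomorphism $\Gamma_i\to\Gamma_j$ compatible with $P$. Because $D$ is connected through $P$, the lifts of $P$ into $\tilde\Gamma_a$ and $\tilde\Gamma_b$ have the same image in $\tilde\Theta$. The same unique-lifting argument as in Step 1 then produces a compatible isomorphism $\tilde\Gamma_a\to\tilde\Gamma_b$ upstairs. So $D\to\tilde X^{+}_t$ would already be reducible along $P$, a contradiction.

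Hence $D\to X_t$ is a reduced disc diagram. The $T(q)$ condition for $X$ then forbids interior vertices of $D$ of degree in $(2,q)$, and this is exactly the $T(q)$ condition for $\tilde X^{+}$.

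\textbf{Main obstacle.} The delicate part is the unique-lifting bookkeeping. The isomorphism of graphical cells must commute with the maps to $\tilde\Theta$, not merely to $\Theta$. One also has to be careful that simplification has not identified the two cells in a way that changes what counts as essentially distinct. I would handle both points uniformly by noting that two lifts of a connected graph to a covering space which agree at one vertex coincide.
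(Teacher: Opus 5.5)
Your proof is correct and follows essentially the same route as the paper: the paper uses unique lifting for the covering $\tilde\Theta\to\Theta$ to show that essentially different factorizations of a piece in $\tilde X^{+}$ project to essentially different factorizations in $X$, and from this it deduces both that pieces map to pieces and that reduced diagrams map to reduced diagrams, exactly as in your Steps 1 and 2. One small terminological point: after simplification the map $\tilde X^{+}_t\to X_t$ need not be a covering (e.g.\ when a cell maps onto a proper power), but your argument only uses unique lifting for $\tilde\Theta\to\Theta$, so this does not affect it.
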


\begin{proof}
    We show that every pair of essentially different factorizations of a piece $P\rightarrow \tilde{X}^+_t$ induces a pair of essentially different factorizations of the path $P\rightarrow \tilde{X}^+_t \xrightarrow{\pi} X_t$. Consequently, under the map $\pi: \tilde{X}^+_t \rightarrow X_t$, every piece is mapped to a piece, and every reduced disc diagram is again mapped to a reduced disc diagram. Therefore, $X_t$ is $C(p)$--$T(q)$ implies that $\tilde{X}^+_t$ is $C(p)$--$T(q)$.

    For $i\in\{1,2\}$, let $P\rightarrow \tilde{\Gamma}_i\rightarrow \tilde{\Theta}$ be two factorizations of a piece $P\rightarrow \tilde{X}_t^+$ such that there is no isomorphism $\tilde{\Gamma}_1\rightarrow \tilde{\Gamma}_2$ making the following the diagram commute:
    \[
    \begin{tikzcd}
       P \arrow[r] \arrow [d] & \tilde{\Gamma}_1 \arrow[d]\arrow[ld] \\ 
        \tilde{\Gamma}_2 \arrow[r] & \tilde{\Theta}
    \end{tikzcd}
    \]

    Note that the two factorizations of the piece $P\rightarrow\tilde{X}_t^+$ naturally induce factorizations of the path $P\rightarrow \tilde{X}^+_t\xrightarrow{\pi} X_t$ as $P\rightarrow \tilde{\Gamma}_i\rightarrow \tilde{\Theta}\rightarrow \Theta$ for $i\in\{1,2\}$. If the induced factorizations are not essentially different, i.e., there is an isomorphism $\tilde{\Gamma}_1\rightarrow \tilde{\Gamma}_2$ such that the following diagram commutes:
    \[
    \begin{tikzcd}
       P \arrow[r] \arrow [d] & \tilde{\Gamma}_1 \arrow[d]\arrow[ld] \\ 
        \tilde{\Gamma}_2 \arrow[r] & \Theta
    \end{tikzcd}
    \]
    Then by the unique lifting property of the covering map $\tilde{\Theta}\rightarrow \Theta$, the composition $\tilde{\Gamma}_1\rightarrow \tilde{\Gamma}_2\rightarrow \tilde{\Theta}$ and the map $\tilde{\Gamma}_1\rightarrow \tilde{\Theta}$ must also be the same, therefore, the following commuting diagram: 
    \[
    \begin{tikzcd}
       P \arrow[r] \arrow [d] & \tilde{\Gamma}_1 \arrow[d]\arrow[ld]\arrow[rdd] & \\ 
        \tilde{\Gamma}_2 \arrow[r]\arrow[rrd] & \Theta &\\
        & & \tilde{\Theta} \arrow[ul]
    \end{tikzcd}
    \]
    However, this contradicts our assumption that the factorizations $P\rightarrow \tilde{\Gamma}_i\rightarrow \tilde{\Theta}$ are essentially different. 
    Therefore, the induced factorizations $P\rightarrow \tilde{\Gamma}_i\rightarrow \tilde{\Theta}\rightarrow \Theta$ must also be essentially different.

\end{proof}

The same argument also shows that the ordinary universal cover is $C(p)$--$T(q)$. We focus on the simplified universal cover because this will be important in the proofs of Theorems~\ref{thm:tA} and~\ref{thm:tD}.

We will also need the following terminology. 

\begin{df}
\label{df:essentially_action}
    Let $X$ be a $C(p)$ graphical complex. 
    An automorphism $f\in \mathrm{Aut}(X)$ is called \textbf{essentially} $C(p)$ if $f$ stabilizes an essentially $C(p)$ cell of $X$. 
    An action of a group $G$ on $X$ by automorphisms is called \textbf{essentially $C(p)$} if every element of $G$ is essentially $C(p)$. 
\end{df}

\begin{df}
\label{df:essentially_presentation}
Let $G$ be a group defined by a $C(p)$ graphical presentation, and let $\tilde{X}^{+}$ be the associated simplified universal cover. A subgroup $H\leq G$ is called \textbf{not essentially $C(p)$} if the canonical action of $H$ on $\tilde{X}^{+}$ is not essentially $C(p)$.

A $C(p)$ graphical complex $X$ is called \textbf{torsion-essentially $C(p)$-free} if every finite-order automorphism of $X$ whose action on $X^1$ is free is not essentially $C(p)$. 
A $C(p)$ graphical presentation is called \textbf{torsion-essentially \(C(p)\)-free} if its associated simplified universal cover $\tilde{X}^{+}$ is torsion-essentially $C(p)$-free.
\end{df}

\section{Simply connected $C(4)$--$T(4)$ graphical complexes}\label{sec:sc_graphical}

In this section, we recall several properties of simply connected \cftfs graphical complexes proved in \cite{Helly}. 
The statements of the lemmas below are the same as those in \cite{Helly}. 
However, \cite{Helly} uses a slightly different notion of reduced disc diagram. For this reason, we reproduce the proofs, following the arguments of \cite{Helly}, with the necessary adjustments and some additional details.

The following lemma is \cite[Lemma 6.15]{Helly}. It is the graphical \cftfs analogue of \cite[Theorem 6.10]{OsajdaPrytula} in the graphical $C(6)$ setting, and of \cite[Propositions 3.4, 3.5, 3.7 and Corollary 3.6]{HodaQuadric} in the classical \cftfs setting.

\begin{lm}
\label{lem:intersections}
Let  $X$ be a simply connected \cftfs graphical complex with associated presentation $\langle f :\Gamma \rightarrow \Theta \rangle$.
Then the following hold:
  \begin{enumerate}
  \item  For every graphical cell $\Gamma_i$, the map
    $\Gamma_i \to \Theta$ is an embedding.
  \item The intersection of (the images of) any two
    graphical cells is either empty or it is a finite tree.
  \item If three graphical cells pairwise intersect then
    they triply intersect and the intersection is a finite tree.		
  \end{enumerate}
\end{lm}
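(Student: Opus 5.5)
All three parts are proved by contradiction in the same way. From a failure of the statement we build a closed path in $\Theta$. By simple connectedness of $X_t$ it bounds a disc diagram, and by Lemma~\ref{l:vank} we take one of minimal area, which is reduced. Proposition~\ref{reduceddiagram} then makes this a $C(4)$--$T(4)$ diagram. Next we invoke the classical combinatorial Gauss--Bonnet (curvature) theorem for $C(4)$--$T(4)$ disc diagrams. Since interior vertices have degree $\geq 4$ and interior faces have $\geq 4$ pieces, the total curvature $2\pi$ must be concentrated near the boundary. Consequently any nontrivial reduced $C(4)$--$T(4)$ disc diagram that is not a single cell has at least two (in fact typically four, or three counted with weights) boundary cells, i.e. shells or corners. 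Each of these meets the rest of the diagram in at most one piece (respectively at most two pieces). The contradiction in each part is obtained by locating such shells along arcs of the boundary where they cannot exist.

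For (1), suppose $\Gamma_i\to\Theta$ is not injective. Take two distinct vertices $u,u'$ of $\Gamma_i$ with the same image, joined by an immersed path $\gamma$ in $\Gamma_i$ whose length is minimal among such pairs. Then $f\circ\gamma$ is a closed path in $\Theta$. Because $\Gamma_i$ is path-cycle extensible, $\gamma$ lies on an immersed cycle of $\Gamma_i$, so some 2-cell $R$ of $\mathrm{Th}(\Gamma_i)$ contains $\gamma$ in its boundary. Take a minimal diagram $D$ for $f\circ\gamma$ and glue $R$ to it along $\gamma$. By minimality of $\gamma$, the boundary of the resulting diagram is essentially the complementary arc of the cycle. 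Now argue on the diagram $D\cup R$, or on $D$ itself. Every boundary cell of $D$ has its outer boundary on $f\circ\gamma$, which is a single path in $\Gamma_i$. A shell of $D$ would therefore share an outer path with $\Gamma_i$ of length at least (cell boundary minus $\le 3$ pieces). Either it is a piece, in which case the cell is covered by $\le 3$ pieces, violating $C(4)$; or it is not a piece, in which case the cell maps to $\Gamma_i$ itself and the diagram is reducible or can be shortened. The corner cases are handled the same way.

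For (2), let $\Gamma_i,\Gamma_j$ be distinct cells, embedded by (1). First, connectedness of the intersection: if $x,y$ lie in different components, join them by geodesics $\alpha\subset\Gamma_i$ and $\beta\subset\Gamma_j$ and take a minimal diagram for $\alpha\bar\beta$. Second, absence of cycles: a cycle in the intersection would be an immersed cycle in $\Gamma_i$ that is a single piece, contradicting $C(4)$. In the connectedness argument the boundary consists of two "sides," each lying in a single graphical cell. Curvature then forces a shell or corner whose exterior lies on one side, or at the two junction vertices. Such a cell either fails $C(4)$, because its interior boundary has $\leq 2$ pieces and its outer boundary is a piece with $\Gamma_i$, or it coincides with $\Gamma_i$ (or $\Gamma_j$) and contradicts reducedness. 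Choosing $x,y$ to minimize the area removes the degenerate case where the diagram is a single cell. Finiteness is automatic because the $\Gamma_i$ are finite.

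For (3), take points in the three pairwise intersections, connect them inside the respective cells, and fill the resulting triangle by a minimal diagram. With three sides, curvature still gives a shell or corner not situated at the three junction points unless the diagram is degenerate. Degeneracy means that the three sides meet at a common point, and that point lies in all three cells. The triple intersection is then a finite tree by (2), being an intersection of trees within a tree. The main obstacle is the curvature bookkeeping at the junction vertices in (2) and (3): corners there may have only one side on the boundary. I would first try the standard trick of choosing the configuration with minimal diagram area, so that any boundary cell at a junction could be absorbed into the adjacent graphical cell. That would give a strictly smaller counterexample.
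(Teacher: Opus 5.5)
You take a different route from the paper. You run a Greendlinger-type curvature count directly on a minimal reduced diagram $D$ bounded by the side paths. The paper instead glues extension cells ($F_1$; $F_1,F_2$; $F_1,F_2,F_3$) onto $D$ along the sides and quotes Hoda's Propositions 3.4, 3.5, 3.7 to force the enlarged diagram to be reducible; this yields a cell of $D$ mapping compatibly into one of the given graphical cells. Your route can be made to work, but as written it has two genuine gaps.

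The first gap is the compatible case. Suppose a boundary cell $F$ of $D$ has outer arc $\sigma$ on a side $\alpha\subset\Gamma_i$ and maps to $\Gamma_i$ compatibly with $\alpha$. This does \emph{not} contradict reducedness of $D$: reducedness only concerns pairs of cells of $D$, and $\alpha$ is not a cell of $D$. Only $D\cup R$ becomes reducible, which by itself is no contradiction. The way out is to replace $\sigma$ in $\alpha$ by the lift to $\Gamma_i$ of $\partial F\setminus\sigma$. This keeps the endpoints, and the new boundary bounds $D\setminus F$. For this to give a contradiction you must minimize the \emph{area} of $D$ over all configurations (endpoints, side paths, diagram), as the paper does. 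Your minimal-length $\gamma$ in (1) and geodesic sides in (2) do not support it, since the new path may be longer. Once you minimize area, you must also rule out cut vertices of $D$ by hand, as the paper does.

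The second gap is the junction bookkeeping, which you leave open and where your stated claims are false. Suppress non-junction vertices of degree $2$, let $i(F)$ be the number of interior arcs of $F$, and use
\[ 4=\sum_{v\in\partial D}(3-d(v))+\sum_{v\notin\partial D}(4-d(v))+\sum_{F}(4-d(F)). \]
A junction of degree $2$ contributes $+1$, and a cell whose outer arc contains exactly one junction contributes at most $2-i(F)$.

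In (3), take two cells sharing one interior arc: one has $v_{12}$ on its outer arc, the other has $v_{23},v_{31}$ (hence all of $\gamma_3$). The total is exactly $4$, with every positive term at a junction. So curvature does not produce a shell away from the junctions. Moreover, the second cell, although a ``shell'', may be covered by four pieces, so $C(4)$ says nothing about it. Likewise in (1), the cell through the basepoint does not have its outer arc on a single path in $\Gamma_i$, contrary to what you assert.

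What is needed in (2) and (3) is the following. A junction cell with $i(F)=1$ is covered by three pieces unless it is compatible with an adjacent side. In that case, absorbing it into that side moves the junction to the far end of the other part of its outer arc. That part lies in the relevant pairwise intersection (in the same component, for (2)), and the area drops. Cells with at least two junctions on their outer arc contribute $\le 0$. Side cells with $i(F)\le 2$ are excluded by $C(4)$ or by absorption. After this, only junction vertices carry positive curvature, at most $2$ resp.\ $3$, both less than $4$. In (1) the total is at most $2$ even without this step.

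Two smaller points. The single-cell case is settled by $C(4)$ or by one side lying in the intersection, not by area minimization. In (3), connectedness of $(\Gamma_1\cap\Gamma_2)\cap(\Gamma_1\cap\Gamma_3)$ does not follow from ``trees within a tree'', since $\Gamma_1$ is not a tree. It needs a short $C(4)$ argument: two components would give a cycle in $\Gamma_1$ covered by two pieces.
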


\begin{proof}
The proofs of all the items (1), (2), (3) follow the same lines: we assume the statement does not hold and we show that this implies existence of a reduced disc diagram contradicting \cftf.

(1) Suppose there is a graphical cell $\Gamma_1$ that does not embed. Since we assume that $\Gamma$ is simplicial, there is a path $\gamma\in \Gamma_1$ with ends $v\neq v'\in$ $\Gamma_1$ such that both $v$ and $v'$ are mapped to a common vertex $v_{11}$ in $X_t$ and $\gamma$ is mapped to a loop $\gamma_{1}$ in $X_t$. 
By simple connectedness and by Lemma~\ref{l:vank}
there exists a reduced disc diagram $D$ for $\gamma_{1}$, see the left part of Figure~\ref{f:c4t4A}. We may assume that we choose $v,v',\gamma,D$  so
that the number of all (0-,1-,2-) cells of $D$ is minimal among all possible choices. 

Note that this choice ensures that $\gamma$ is immersed. 
Indeed, if $\gamma$ is not immersed then it has a spur $S$, and therefore, $\gamma_1$ also has a spur $S_1$. Then disc diagram $D'$, bounded by $\gamma_1\setminus S_1$ has two less cells. 
Then we can choose path $\gamma'=\gamma-S$ mapped to $\gamma_1\setminus S_1$, a contradiction with minimality of choice of $v,v',\gamma$.

Moreover, $\gamma_1$ is also immersed.
Indeed, if $\gamma = (u_0 =v, ..., u_n =v')$ is immersed, and $\gamma_1$ is not. Since $\Gamma \rightarrow \Theta$ is an immersion, $(u_0,u_1)$ and $(u_n,u_{n-1})$ must be mapped to the same edge of $\gamma_1$ and $u_1\neq u_{n-1}$. We can then take $\gamma' = (u_1,...,u_{n-1})$, and $\gamma'_1$ be the image of it. It is clear there is a disc diagram bounded by $\gamma'_1$ that has two less cells than $D$, a contradiction. 

Furthermore, $D$ does not have cut vertices and is non-singular, therefore it contains at least one $2$-cell.
First, observe that $v_{11}$ is not a cut vertex. If it is then there exists $0\leq i< n$ such that $u_i$ is mapped to $v_{11}$. Take $\gamma' = (u_0,u_1,...,u_{i})$, clearly there is a disc diagram bounded by $\gamma'_1$ that has less cells than $D$, a contradiction.

Consequently there is only a single cut component $S_v$ of $D$ that contains $v_{11}$ in its boundary.
Let $S\neq S_v$ be a cut component of $D$ with only a single cut vertex. Since $D$ is a disc diagram, such component has to exist. Let $\gamma_S$ be a subpath of $\gamma_1$ corresponding to the boundary of $S$. Clearly there exists a disc diagram bounded by $\gamma_S$ with less cells than $D$.

Take $\gamma'= (u_i,...,u_{j})$ whose image is $\gamma_S$. 
If $u_i\neq u_j$, then $\gamma'$ is a path that gives us smaller disc diagram, a contradiction.
If $u_i= u_j$, then $\gamma\setminus\gamma'$ is a path that gives us smaller disc diagram, again a contradiction. 

\begin{figure}[h]
  \centering
  \includegraphics[width=1\textwidth]{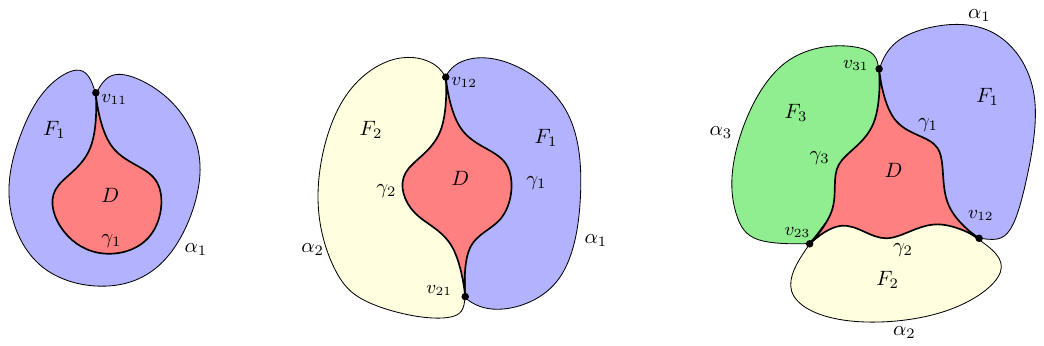}\caption{The proof of Lemma~\ref{lem:intersections}. From left to right: (1), (2), (3).}\label{f:c4t4A}
\end{figure}
	
Now, since $\Gamma$ is path-cycle extensible, there exists a 2-cell $F_1$ whose boundary is the concatenation $\gamma_1\alpha_1$ which is mapped to a loop in $\Gamma_1$. 
Consider a larger disc diagram $D\cup F_1$, and assume that in this disc the only common point of $\gamma_1$ and
$\alpha_1$ is $v_{11}$, see left part of Figure~\ref{f:c4t4A}. Such a disc cannot be reduced, since
otherwise by Proposition~\ref{reduceddiagram} it would be a \cftfs diagram, and this would contradict e.g.\
\cite[Proposition 3.4]{HodaQuadric}. Hence, by the definition of a
reduced diagram, $D\cup F_1$ is reducible along some piece $P$ in $D\cup F_1$. 
Since there are no pieces along $\alpha_1$ and $D$ is reduced, it follows that the piece $P$ has to lie on $\gamma_1$. 
Since $D\cup F_1$ is reducible along $P$
in $X_t$, $P$ factors through $P\to \partial F_1\to \Gamma_1\to X_t$ and some cell $P\to \partial F'\to \Gamma_m\to X_t$, for some cell $F'$, and there is an isomorphism between $\Gamma_m$ and $\Gamma_1$ such that the following diagram commutes
\[
    \begin{tikzcd}
    P\arrow[r] \arrow[rd] & \partial F' \arrow[r] & \Gamma_m \arrow[d,"\cong"] \arrow[rd]  \arrow[d] \\
    &\partial F_1 \arrow[r] & \Gamma_1 \arrow[r] & X_t
    \end{tikzcd}
\]
Thus $\partial F'$ factors through $\Gamma_1$. Consequently $Q=\partial F'\setminus P$ is a path in $\Gamma_1$ see Figure~\ref{f:c4t4C}. 
Take a subpath of $\gamma$ corresponding to $P$. And take a path in $\Gamma_1$ corresponding to $Q$. Take $\gamma'=(\gamma\setminus P)\cup Q$. Since $P$ is a piece in $D\cup F_1$, by definition, $P\rightarrow \partial F_1\rightarrow \Gamma_1$ has to be a subpath of $\gamma$. Therefore $\gamma'$ contains $v$ and $v'$. For corresponding $\gamma'_1$ we get a disc diagram $D'$, such that $D=D'\cup F'$. Clearly $D'$ has smaller number of cells, a contradiction.

\begin{figure}[h]
  \centering
  \includegraphics[width=0.35\textwidth]{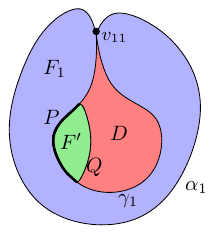}\caption{The proof of Lemma~\ref{lem:intersections}(1).}\label{f:c4t4C}
\end{figure}

(2) First we prove that the intersection of two graphical cells is connected. We proceed analogously to the proof of (1). Suppose not, and let $\Gamma_1,\Gamma_2$ intersect in a non-connected subgraph leading to a reduced disc
diagram as in the middle of Figure~\ref{f:c4t4A}, with the boundary of $F_i$ mapping to $\Gamma_i$. 

Take vertices $v_{12},v_{21}$ in distinct components of the intersection of $\Gamma_1\cap \Gamma_2$. Let $\gamma(i)$ be a path joining $v_{12}$ and $v_{21}$ in $\Gamma_i$. Let $\gamma_i$ be an image of $\gamma(i)$ in $X_t$. It is clear that $\gamma_1\cup\gamma_2$ is a loop in $X_t$.
Analogously to the proof of case (1), we assume that we choose $v_{12},v_{21},\gamma(1),\gamma(2)$ so that resulting disc diagram $D$ has minimal number of all (0-,1-,2-) cells among all possible choices. 
We consider the extended disc diagram $D\cup F_1 \cup F_2$, where $F_i$ is the 2-cell corresponding to some extension of $\gamma(i)$ in $\Gamma_i$. By \cite[Proposition 3.5]{HodaQuadric} the new diagram is not reduced and hence,
as in the proof of (1) we get to a contradiction by finding a new counterexample with a smaller area diagram.
This proves the connectedness of the intersection of two graphical cells.

The fact that such intersections does not contain cycles follows immediately from the $C(4)$ condition.

(3) By (1) and (2) it is enough to show that the triple intersection is non-empty.
Here we proceed analogously to (1) and (2). The corresponding diagrams are depicted in Figure~\ref{f:c4t4A} on the right, and the fact that the extended diagram $D\cup F_1 \cup F_2 \cup F_3$ is not reduced follows
from \cite[Proposition 3.7]{HodaQuadric}.
\end{proof}

We next recall the \textbf{strong Helly property} and the \textbf{Helly property} for simply connected graphical \cftfs complexes. These are \cite[Lemma 6.16 and Lemma 6.17]{Helly}, stated in our notation. 

\begin{lm}[Strong Helly property]\label{l:c4t4strongH}
  Let $\Gamma_1,\Gamma_2,\Gamma_3$ be three pairwise intersecting graphical cells in a simply connected \cftfs graphical complex.
  Then there is an intersection $\Gamma_i\cap \Gamma_j$ of two graphical cells which is contained in the
  third one.
\end{lm}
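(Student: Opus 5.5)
We argue by contradiction, in the same style as the proof of Lemma~\ref{lem:intersections}. By Lemma~\ref{lem:intersections}(1)–(3), each $\Gamma_i$ embeds in $\Theta$, each pairwise intersection $\Gamma_i\cap\Gamma_j$ is a nonempty finite tree, and the triple intersection $T=\Gamma_1\cap\Gamma_2\cap\Gamma_3$ is a nonempty finite tree. Suppose that for every choice of $\{i,j,k\}=\{1,2,3\}$ the tree $\Gamma_i\cap\Gamma_j$ is not contained in $\Gamma_k$. Then for each pair we can pick a vertex $w_{ij}\in(\Gamma_i\cap\Gamma_j)\setminus\Gamma_k$.

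Next we build a geodesic triangle of pieces. Fix a vertex $t\in T$. Inside the tree $\Gamma_i\cap\Gamma_j$ let $\beta_{ij}$ be the unique path from $t$ to $w_{ij}$. Let $x_{ij}$ be the last vertex of $\beta_{ij}$ lying in $T$; this is well defined since $T$ is a subtree. The remaining subpath $\beta'_{ij}$ from $x_{ij}$ to $w_{ij}$ is nontrivial and meets $\Gamma_k$ only at $x_{ij}$, because a vertex of $\beta'_{ij}$ in $\Gamma_k$ would lie in $T$. Connecting the endpoints $w_{12}$, $w_{13}$, $w_{23}$ by paths $\gamma_i$ inside the respective cells $\Gamma_i$ gives a closed path $\gamma_1\gamma_2\gamma_3$ in $\Theta$. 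More naturally, we take the closed path obtained by going out along $\beta'_{12}$, across $\Gamma_2$ (or $\Gamma_1$) to $\beta'_{23}$, and so on. The region it bounds, together with the three cells, shows that the "tripod" configuration forces a nontrivial cycle. By simple connectedness and Lemma~\ref{l:vank} there is a reduced disc diagram $D$ with this boundary. As in the proof of Lemma~\ref{lem:intersections}, we choose all the data so that $D$ has the minimal number of cells.

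We then attach the cells. Using path-cycle extensibility, we attach to $D$ a $2$-cell $F_i$ whose boundary runs along the $\Gamma_i$-portion of the boundary of $D$ and closes up inside $\Gamma_i$. The extended diagram $D\cup F_1\cup F_2\cup F_3$ has three boundary cells meeting pairwise along the nontrivial paths $\beta'_{ij}$. In a reduced \cftfs disc diagram, a configuration of three cells pairwise sharing edges around a region while not sharing a common vertex on that region contradicts the quadric (\cftf) combinatorics, exactly as in \cite[Propositions 3.5, 3.7]{HodaQuadric}. Concretely, the combinatorial Gauss–Bonnet count for \cftfs diagrams allows no such "thin triangle" whose three sides each have a corner of a single cell of degree two. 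Hence the extended diagram is not reduced. As in (1), the reducibility must occur along a piece lying on the boundary of $D$ shared with some $F_i$. Replacing that piece by the complementary path of the other factorization within $\Gamma_i$ produces a new counterexample with a smaller diagram, which contradicts minimality.

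The main obstacle is the second step: setting up the boundary path so that the extended diagram has precisely the forbidden shape. In particular, one must make sure that the three $F_i$ genuinely share the nontrivial segments $\beta'_{ij}$ rather than just a vertex of $T$, since otherwise the configuration is the one already allowed by Lemma~\ref{lem:intersections}(3). If this direct route fails, I would instead use that at $x_{ij}$ the edge of $\beta'_{ij}$ gives, in the link, a $4$-cycle-free configuration. Along with the first edges from $T$ into $\Gamma_i\cap\Gamma_k$ and $\Gamma_j\cap\Gamma_k$, this yields a reduced diagram with an interior vertex of degree $3$, contradicting $T(4)$.
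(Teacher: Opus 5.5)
There is a genuine gap: your main route never produces a concrete diagram that contradicts $C(4)$--$T(4)$. The closed path bounding $D$ is never pinned down. If you route it through the pairwise intersections, as your $\beta_{ij}$ suggest, it lies in $(\Gamma_1\cap\Gamma_2)\cup(\Gamma_1\cap\Gamma_3)\cup(\Gamma_2\cap\Gamma_3)$. That union is a tree, since it consists of three trees pairwise meeting exactly in the subtree $T$. So the minimal $D$ has no $2$-cells, and your concluding step has nothing to act on. That step was a reduction along a piece shared by $D$ and some $F_i$, yielding a smaller counterexample. In fact any reduction occurs between two of the $F_i$ themselves, and the contradiction must come from the fact that such a reduction identifies two of the three cells, not from minimality of area. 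The appeal to a ``thin triangle'' obstruction in Hoda's Propositions 3.5 and 3.7 does not fill this. The paper uses those results for the configurations of Lemma~\ref{lem:intersections}(2),(3), and you do not specify which diagram would be fed into them or carry out the Gauss--Bonnet count.

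The missing idea is the choice of a single branch vertex at which three distinct cells meet. The paper picks $v\in T$ and paths $\gamma_i\subseteq\Gamma_j\cap\Gamma_k$ from $v$ to $v_i\in(\Gamma_j\cap\Gamma_k)\setminus\Gamma_i$ that minimize $|\gamma_1|+|\gamma_2|+|\gamma_3|$. A common initial segment of $\gamma_i$ and $\gamma_j$ would lie in $T$, and moving $v$ along it would shorten the total, so the legs pairwise meet only at $v$. Alternatively, you could take $v$ to be the median in $T$ of your exit points $x_{12},x_{13},x_{23}$. Path-cycle extensibility then extends $\bar\gamma_i\gamma_j\subseteq\Gamma_k$ to an immersed cycle. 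The three resulting $2$-cells, glued along the $\gamma_i$, form a disc diagram in which $v$ is an interior vertex of degree $3$. By $T(4)$ this diagram is reducible, necessarily along a subpath of some $\gamma_i$. That forces $\Gamma_j$ and $\Gamma_k$ to be the same cell, hence $v_j\in\Gamma_j$, a contradiction.

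Your fallback paragraph gestures at a degree-$3$ vertex but places it at $x_{ij}$, where it need not exist. The other two legs leave $T$ at $x_{ik}$ and $x_{jk}$, which in general differ from $x_{ij}$, while the edges of $T$ at $x_{ij}$ lie in all three cells. The fallback also simply asserts that the diagram is reduced, whereas the reducible case is exactly where distinctness of the cells must be used.
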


\begin{proof}
We may assume that the three graphical cells are distinct, since otherwise the conclusion is immediate. Suppose that no intersection of two of them is contained in the third. Then, for each $\{i,j,k\}=\{1,2,3\}$, choose a vertex $v_i\in \Gamma_j\cap \Gamma_k\setminus \Gamma_i$.
  By Lemma~\ref{lem:intersections} there
  exists a vertex $v\in \Gamma_1\cap \Gamma_2 \cap \Gamma_3$ and immersed paths
  $\gamma_i\subseteq \Gamma_j\cap \Gamma_k$ from $v$ to $v_i$, for all triples
  $(i,j,k)$.
  Choose $v$ that minimizes the sum of length $|\gamma_1|+|\gamma_2|+|\gamma_3|$. We claim that each union $\gamma_i\cup\gamma_j$ is immersed.
  Indeed, suppose that some $\gamma_i\cup\gamma_j$ is not immersed. Then there exists $v'\neq v$ such that  $v'\in\gamma_i\cap\gamma_j$ and $\gamma_i\cap\gamma_j=P$, where $P$ is a path from $v'$ to $v$. But $\gamma_i\subseteq \Gamma_j\cap \Gamma_k$, for all triples $(i,j,k)$, so $P$ and in particular $v'$ belongs to $\Gamma_1\cap \Gamma_2 \cap \Gamma_3$. 
  Take $\gamma_i'=\gamma_i\setminus P$, $\gamma_j'=\gamma_j\setminus P$ and $\gamma_k'$ be an immersed subpath of $\gamma_k\cup P$. 
  It is clear that $$|\gamma'_1|+|\gamma'_2|+|\gamma'_3|\leq |\gamma_1|+|\gamma_2|+|\gamma_3|-|P|.$$ 
  By the minimality of the choice of $v$ we have $|P|=0$. Since $|P|$ is the length of path between $v$ and $v'$, it follows that $v'=v$, a contradiction.
  
  Since $\Gamma$ is path-cycle extensible and each $\gamma_i\cup\gamma_j$ is immersed, we may find a cycle in each $\Gamma_i$, for $i=1,2,3$, and a disc diagram $D$ consisting of cells $F_i$ mapped to those cycles, as in Figure~\ref{f:c4t4D}. Since complex $X_t$ satisfies $T(4)$ condition, this disc diagram cannot be reduced. Therefore, $D$ is reducible along a subpath of $\gamma_i$ for some $i$, which implies that $\Gamma_j$ and $\Gamma_k$ have the same image in $\Theta$. Thus $\Gamma_j$ contains $v_j$, a contradiction.
\end{proof}
\begin{figure}[h]
  \centering
  \includegraphics[width=0.24\textwidth]{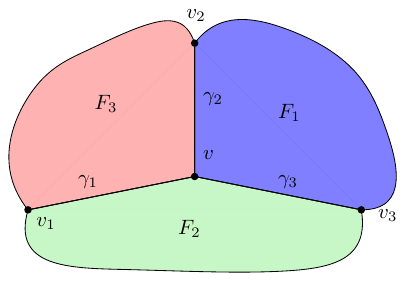}\caption{The proof of Lemma~\ref{l:c4t4strongH}.}\label{f:c4t4D}
\end{figure}

The property below follows directly from Lemmas \ref{l:c4t4strongH} and \ref{lem:intersections}(3). 
\begin{lm}[Helly property]\label{lem:flag}
Let $X$ be a simply connected \cftfs graphical complex.
Consider a collection $\{\Gamma_i \to \Theta\}_{i \in I}$ of graphical cells. If for every $i,j \in I$ the intersection $\Gamma_i \cap \Gamma_j$ is non-empty then the intersection $\bigcap_{i \in I} \Gamma_i$ is a non-empty tree.
\end{lm}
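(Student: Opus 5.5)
The plan is a reduction to finite subfamilies, so that Lemma~\ref{l:c4t4strongH} and Lemma~\ref{lem:intersections}(3) do all the work. Each graphical cell $\Gamma_i$ is finite, and by Lemma~\ref{lem:intersections}(1) it embeds in $\Theta$. So we may regard each $\Gamma_i$ as a finite subgraph of $\Theta$, and intersections are taken as subgraphs.

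\emph{Step 1: any three cells meet in a tree.} Take pairwise intersecting $\Gamma_i,\Gamma_j,\Gamma_k$. By Lemma~\ref{lem:intersections}(3) the triple intersection $\Gamma_i\cap\Gamma_j\cap\Gamma_k$ is a non-empty finite tree. Lemma~\ref{l:c4t4strongH} gives more: some pairwise intersection, say $\Gamma_i\cap\Gamma_j$, lies inside the third cell. Hence $\Gamma_i\cap\Gamma_j\cap\Gamma_k=\Gamma_i\cap\Gamma_j$, which is a tree by Lemma~\ref{lem:intersections}(2).

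\emph{Step 2: finite families, by induction on $|I|$.} I claim that for a finite pairwise intersecting family, the total intersection equals one of the pairwise intersections $\Gamma_a\cap\Gamma_b$. This is trivial for $|I|\le 2$ and is Step 1 for $|I|=3$.

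For the inductive step, add a cell $\Gamma_c$ to a family $\mathcal{F}$ whose total intersection is $\Gamma_a\cap\Gamma_b$. Apply Lemma~\ref{l:c4t4strongH} to the triple $\Gamma_a,\Gamma_b,\Gamma_c$. One of three things happens.
\begin{enumerate}
\item If $\Gamma_a\cap\Gamma_b\subseteq\Gamma_c$, the total intersection is still $\Gamma_a\cap\Gamma_b$.
\item If $\Gamma_a\cap\Gamma_c\subseteq\Gamma_b$, then $\bigcap(\mathcal{F}\cup\{\Gamma_c\})=\Gamma_a\cap\Gamma_b\cap\Gamma_c=\Gamma_a\cap\Gamma_c$.
\item If $\Gamma_b\cap\Gamma_c\subseteq\Gamma_a$, symmetrically the total intersection is $\Gamma_b\cap\Gamma_c$.
\end{enumerate}
In every case the total intersection is a pairwise intersection, hence a non-empty tree by Lemma~\ref{lem:intersections}(2).

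\emph{Step 3: arbitrary families.} Fix $i_0\in I$. Every intersection $\bigcap_{i\in J}\Gamma_i$ with $i_0\in J$ is a subgraph of the finite graph $\Gamma_{i_0}$. Consider the finite subfamilies $J$ containing $i_0$. By Step 2 their intersections are non-empty, and they form a downward directed system of subgraphs of a finite graph. So there is one finite $J_0$ whose intersection is minimal.

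Minimality means that intersecting with any further $\Gamma_j$ leaves $\bigcap_{J_0}$ unchanged. Hence $\bigcap_{i\in I}\Gamma_i=\bigcap_{i\in J_0}\Gamma_i$, which is a non-empty tree by Step 2.

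The only potential obstacle is the infinite case, and it is handled by this finiteness argument. One point needs care: intersections must be taken in the subgraph sense, with edges included, so that the tree conclusion transfers. This is fine, since each cell is embedded.
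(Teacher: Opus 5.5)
Your proof is correct and is essentially the paper's argument. The paper only remarks that the Helly property follows directly from Lemma~\ref{l:c4t4strongH} and Lemma~\ref{lem:intersections}(3); your induction showing that the intersection of a finite pairwise intersecting family equals one of its pairwise intersections, together with the stabilization argument inside the finite graph $\Gamma_{i_0}$ for infinite $I$, supplies exactly the details that remark leaves implicit. One caveat, which the statement itself shares: the family must contain at least two distinct cells, since for a single cell the intersection is $\Gamma_a$ itself, which is not a tree. So read your base case $|I|\le 2$ with $a\neq b$ before invoking Lemma~\ref{lem:intersections}(2).
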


\section{Quadric complexes}\label{sec:quad}

In this section, we first recall the definition of quadric complexes, then we describe the quadrization of a non-thickened graphical complex, finally, we show that the quadrization of a simply connected non-thickened graphical \cftfs small cancellation complex is quadric.

\begin{df}\cite[Definition 1.1.]{HodaQuadric}
A \textit{locally quadric complex} is a square complex $Y$ satisfying the following conditions.
\begin{enumerate}[(A)]
\item The attaching map of every square is an immersion.
\item If there are two squares $F_1,F_2\in Y$ that they share at least three edges, then $\partial F_1=\partial F_2$.
\item If there are two squares $F_1,F_2\in Y$ such that $\partial(F_1\cup F_2)$ is a cycle of length $4$, then there is $F\in Y$ such that $\partial F=\partial(F_1\cup F_2)$.
\item If there are three squares $F_1,F_2,F_3\in Y$ such that $\partial(F_1\cup F_2\cup F_3)$ is a cycle of length $6$, then there exist $F,F'\in Y$ such that $\partial(F_1\cup F_2\cup F_3)=\partial(F\cup F')$, i.e.~this cycle has a diagonal that divides it into two $4$-cycles.
\end{enumerate}

Conditions (C) and (D) are called \textit{rules of replacement}. 
A \textit{quadric complex} is a simply connected locally quadric complex.
\end{df}

\begin{figure}[h]
\begin{center}
\includegraphics[scale=2.2]{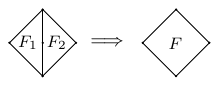}
\end{center}
\begin{center}
(C)
\end{center}
\begin{center}
\includegraphics[scale=2.2]{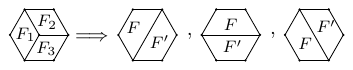}
\end{center}
\begin{center}
(D)
\end{center}
\caption{Rules of replacement for quadric complexes}
\end{figure}

\begin{pr}\label{mgchar}
\cite[Proposition 1.19]{HodaQuadric}
Let $X$ be a square complex.  The following are
  equivalent.
  \begin{enumerate}
  \item  $X$ is quadric.
  \item  $X$ is $4$-flag and the $1$-skeleton $X^1$
    of $X$ is $4$-bridged.
  \item $X$ is $4$-flag and simply connected and the
    $1$-skeleton $X^1$ of $X$ is simplicial and every $6$-cycle
    in $X^1$ has a diagonal.
  \end{enumerate}
\end{pr}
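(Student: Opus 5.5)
The plan is to prove the cycle $(1)\Rightarrow(3)\Rightarrow(2)\Rightarrow(1)$. Throughout I use reduced disc diagrams over a square complex together with combinatorial Gauss--Bonnet for square diagrams. The definitions I rely on are these: $4$-flag means every $4$-cycle in $X^1$ bounds a square, and $X^1$ is $4$-bridged if it is simplicial and every isometric cycle in it has length $4$.

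\emph{Key lemma.} If $X$ is locally quadric and $D\to X$ is a minimal-area disc diagram for a closed path, then $D$ contains no two squares meeting along two consecutive edges, and no interior vertex of degree $\le 3$. Both claims come from replacing part of $D$ by something of smaller area.
\begin{itemize}
\item Two squares sharing two consecutive edges have a union whose boundary is a $4$-cycle, or which degenerates via condition (B). Rule (C) then lets us replace the pair by one square.
\item An interior vertex of degree $2$ is handled by (B)/(C).
\item An interior vertex of degree $3$ carries three squares whose union has a $6$-cycle boundary. Rule (D) lets us replace them by two squares.
\end{itemize}
So the minimal diagram is combinatorially $C(4)$--$T(4)$. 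Gauss--Bonnet then gives at least four boundary vertices of positive curvature, i.e.\ boundary vertices of degree $2$ in $D$ ("corners" lying in a single square). The standard consequences, as in the quadric/$C(4)$--$T(4)$ literature, are:
\begin{itemize}
\item a minimal diagram with boundary of length $4$ is a single square;
\item a minimal diagram with boundary of length $6$ consists of at most two squares.
\end{itemize}

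\emph{$(1)\Rightarrow(3)$.} Simple connectivity is part of the definition of quadric. For simpliciality, a loop or a bigon in $X^1$ would be null-homotopic of length $\le 2$. A minimal diagram for it must then be trivial by the curvature count, which forces the edges to coincide. A $4$-cycle in $X^1$ bounds a minimal diagram, which by the key lemma is a single square; this gives $4$-flagness. A $6$-cycle bounds a diagram with at most two squares, and their common edge is the required diagonal.

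\emph{$(3)\Rightarrow(2)$.} Suppose $X^1$ contains an isometric cycle $\sigma$ of length $n\ne 4$. Length $3$ cannot occur: a $3$-cycle would need a diagram whose squares produce an odd cycle, and the curvature count rules this out. Length $5$ is excluded by the same curvature/parity argument on a minimal diagram. For $n\ge 6$, fill $\sigma$ by a minimal diagram $D$; this is possible since $X$ is simply connected. Here the $6$-cycle diagonal condition together with $4$-flagness plays the role of the rules of replacement, so the key lemma's conclusion still holds for $D$. Gauss--Bonnet gives a boundary corner, i.e.\ a square in $D$ meeting $\sigma$ in two consecutive edges. Its opposite vertex gives a path shortcutting $\sigma$ between vertices at distance $2$ along $\sigma$; isometry forces those vertices to be at distance exactly $2$. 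Pushing on, a region with at least two adjacent corners yields a shortcut of length $<\lfloor n/2\rfloor$, contradicting isometry. I expect this to be the main obstacle: locating enough positive curvature on $\partial D$ to produce a genuine shortcut, rather than only a path of equal length. I would first try the standard ``ladder'' argument: the boundary corners split $\partial D$ into arcs, and Gauss--Bonnet with total curvature $4$ forces two corners within distance $\le 3$ along $\sigma$.

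\emph{$(2)\Rightarrow(1)$.} Condition (A) follows from simpliciality of $X^1$. Condition (B) follows because in a simplicial graph, two $4$-cycles sharing three edges coincide; I treat squares with identical boundaries as equal, as in Hoda's convention. For (C), $\partial(F_1\cup F_2)$ is a $4$-cycle, which bounds a square by $4$-flagness. For (D), the $6$-cycle is not isometric by $4$-bridgedness, so it has a chord or a vertex-shortcut. Since $X^1$ is bipartite-like (no odd isometric cycles), this yields a diagonal, and the two resulting $4$-cycles bound squares by $4$-flagness. Finally, simple connectivity follows from the $4$-bridged condition: every cycle is generated by isometric cycles, which are $4$-cycles and hence bound squares. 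Together these give that $X$ is quadric.
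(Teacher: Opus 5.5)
You have a genuine gap in $(3)\Rightarrow(2)$ for isometric cycles of length $n\ge 6$. Note that the paper does not prove this proposition; it only cites Hoda, so your argument has to stand on its own. This step carries essentially all the content of the proposition, since $(3)\Rightarrow(1)$ is immediate: $4$-flagness, simpliciality and the $6$-cycle diagonals give (A)--(D) directly. Yet you leave the step as an ``expected obstacle'', and the plan you propose does not work as stated.

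Gauss--Bonnet does not force two corners within distance $3$ along $\partial D$. A large grid diagram has exactly four corners, all far apart. Moreover, two nearby corners give no shortcut if a boundary vertex of negative curvature lies between them, and your sketch never accounts for such vertices.

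The missing bookkeeping is as follows.
\begin{enumerate}
\item Parity already excludes odd $n$, so let $n=2k$ and let $D$ be a minimal diagram for $\sigma$. Then $D$ is nonsingular and CAT(0), and each boundary vertex has $\kappa^{\square}_D=2-\rho^{\square}_D\le 1$.
\item Since $D^1\to X^1$ does not increase distances and $\sigma$ is isometric, every subpath of $\partial D$ of length at most $k$ is a geodesic of $D$.
\item Let $c$ be the number of corners and $-N$ the total negative boundary curvature. Proposition~\ref{gb} gives $c-N\ge 4$.
\item The $c$ corners cut $\partial D$ into $c$ arcs, and at most $N$ of these arcs contain a negatively curved vertex. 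Hence at least four arcs are clean, i.e.\ all their interior vertices are flat. Their total length is at most $n$, so some clean arc, from a corner $p_i$ to a corner $p_j$, has length $\ell\le\lfloor n/4\rfloor$.
\item The squares along this clean arc form a plateau (ladder), as in Propositions~\ref{geo} and~\ref{plateau}. It yields a path of length $\ell$ from $p_{i-1}$ to $p_{j+1}$.
\item The boundary subpath between $p_{i-1}$ and $p_{j+1}$ has length $\ell+2\le\lfloor n/4\rfloor+2\le k$, valid for all $n\ge 6$. By step 2 it is therefore geodesic, which contradicts the shorter ladder path.
\end{enumerate}
With this argument in place, your chain of implications goes through.

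A smaller point concerns your definition of $4$-flag (``every $4$-cycle bounds a square''). Under it, (A) does not follow from simpliciality, since a square could be attached along $a\,b\,a\,b$. You need Hoda's convention that $X$ is the $4$-flag completion of $X^1$, so that squares correspond exactly to embedded $4$-cycles. The same convention is what justifies identifying squares with equal boundaries.
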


Let $X$ be a graphical complex with embedded graphical cells. 
Let $\mathcal{G}_X$ be the graph obtained from the $1$-skeleton of the non-thickened graphical complex $X_c$ by removing all the edges from $\Theta$.
The \textbf{quadrization} $Y_X$ of $X$ is the square complex obtained by gluing a unique square to every embedded $4$-cycle in the graph $\mathcal{G}_X$ (also called the \textbf{$4$-flag completion} of $\mathcal{G}_X$). 
Note that $\mathcal{G}_X$ is bipartite with one set of vertices corresponding to vertices of $\Theta$ (denoted $v_i$) and the other to tips of cones (denoted $\widehat{\Gamma}_i$).

If every $1$-cell of $X$ appears on the boundary of a graphical cell then the quadrization $Y_X$ of $X$ is connected.
For the rest of this section, we will assume that every $1$-cell of a graphical complex $X$ is contained in the boundary of at least one graphical cell.
This is not a serious restriction, since for each $1$-cell $e$ of $X$ not
appearing on the boundary of a graphical cell we
can subdivide it (if it is not
embedded) and consider it as $\Gamma'_e$. Take $\Gamma':=\Gamma\cup \Gamma'_e$. 
Thickened and non-thickened realizations of obtained graphical complex
$X'$ deformation retracts to $X$. The original complex $X$
embeds $Aut(X)$-equivariantly into $X'$ via a continuous map $X \to
X'$, which thus faithfully preserves group actions.  Furthermore, if
$X$ is \cftfs then so is $X'$.

The following lemmas are the graphical analogues of \cite[Lemma 3.9 and 3.10]{HodaQuadric}.

\begin{lm}\label{quadsc} Let $X$ be a graphical complex with embedded graphical cells. If $X$ is simply connected then so is its quadrization $Y_X$.
\end{lm}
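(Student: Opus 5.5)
We compare $Y_X$ with the non-thickened realization $X_c$, which is simply connected by assumption, since simple connectedness of $X$ means that of either realization. The plan is to show directly that every closed edge path in $Y_X$ is null-homotopic, by transporting it into $X_c$, contracting it there, and lifting the contraction back into $Y_X$ combinatorially.

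First, the two complexes share their vertex sets: the vertices of $\Theta$ together with the cone tips $\widehat{\Gamma}_i$. The graph $\mathcal{G}_X$ is a subgraph of $X_c^1$. Every closed path $\sigma$ in $\mathcal{G}_X$ alternates between vertices of $\Theta$ and tips. It is therefore a closed path in $X_c$, and it is homotopic in $X_c$ to a closed path $\sigma'$ in $\Theta$. To obtain $\sigma'$, replace each subpath $v\,\widehat{\Gamma}_i\,w$ by a path from $v$ to $w$ inside the embedded cell $\Gamma_i$; this is legitimate because the cone on $\Gamma_i$ is contractible.

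Conversely, each edge $e=(v,w)$ of $\Theta$ lies in some graphical cell $\Gamma_i$, by the standing assumption of this section. Send $e$ to the path $v\,\widehat{\Gamma}_i\,w$ of length two in $\mathcal{G}_X$. We must check that this is well defined up to homotopy in $Y_X$. If $e$ lies in two cells $\Gamma_i,\Gamma_j$, then $v\,\widehat{\Gamma}_i\,w\,\widehat{\Gamma}_j$ is an embedded $4$-cycle in $\mathcal{G}_X$ (embedded because $v\neq w$ and $i\neq j$), so $Y_X$ has a square filling it. Thus the two choices are homotopic rel endpoints.

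This yields a map $\Phi$ from closed paths in $\Theta$ to closed paths in $\mathcal{G}_X$. Applying $\Phi$ to $\sigma'$ gives a path homotopic in $Y_X$ to $\sigma$. To see this, note that consecutive edges of the replacement path inside $\Gamma_i$ are all sent to paths through the same tip $\widehat{\Gamma}_i$ (choosing $\Gamma_i$ for them), and backtracks cancel.

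It remains to show that $\Phi$ carries null-homotopic loops of $\Theta$ in $X_c$ to null-homotopic loops in $Y_X$. A loop in $\Theta$ is null-homotopic in $X_c$ if and only if it is a product of conjugates of boundary cycles of triangles of the cones, equivalently of loops in some $\Gamma_i$. Hence it suffices, by the usual van Kampen argument, to check that $\Phi$ kills every closed path lying in a single cell $\Gamma_i$. For such a path we may choose $\Gamma_i$ for every edge, and the image $v_0\widehat{\Gamma}_i v_1\widehat{\Gamma}_i\cdots$ freely reduces to the trivial path. Homotopies in $\Theta$ such as backtracking are respected trivially.

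The main obstacle is the well-definedness step: independence of the choice of cell containing an edge, and compatibility with the homotopy relation. Both rest on the observation that two cells sharing an edge produce an embedded $4$-cycle in $\mathcal{G}_X$. This uses that graphical cells embed, so that $v\ne w$ and the tips are distinct.
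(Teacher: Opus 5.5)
Your proof is correct, but it takes a genuinely different route from the paper.

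\textbf{The paper's route.} The paper argues by contradiction using a minimal disc diagram. It takes an essential loop $\alpha$ in $Y_X$ and replaces each passage through a tip by a path inside the corresponding cell, giving a loop $\delta$ in $\Theta$. It fills $\delta$ by a disc diagram $D\to X_c$, choosing $\alpha$, the $\delta_{2i}$ and $D$ to minimize the number of $1$- and $2$-cells of $D$. It then performs surgery. A spur of $D$ is eliminated by rerouting $\alpha$ across a square $\widehat\Gamma_{2i}\,v_{2i+1}\,\widehat\Gamma_{2i+2}\,u$. A $2$-cell is eliminated by cutting out the closed star of an interior cone vertex $\widehat\Gamma_F$ and inserting the backtrack $w\,\widehat\Gamma_F\,w$ into $\alpha$. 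This forces $D$ to be a single vertex, and $\alpha$ is then visibly null-homotopic.

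\textbf{Your route.} You build the edge-path map $\Phi$ from $\Theta$ to $\mathcal{G}_X$ and check three things:
\begin{enumerate}
\item $\Phi$ respects backtracking. The only nontrivial case is handled by the squares on $v\,\widehat\Gamma_i\,w\,\widehat\Gamma_j$, the same kind of square the paper uses for spurs.
\item $\Phi$ kills every loop lying in a single cell. By the van Kampen description $\pi_1(X_c)\cong\pi_1(\Theta)/\langle\!\langle\bigcup_i f_i(\pi_1(\Gamma_i))\rangle\!\rangle$, it therefore factors through $\pi_1(X_c)$.
\item Every loop of $Y_X$ is homotopic to some $\Phi(\sigma')$.
\end{enumerate}
So you in fact prove that $\pi_1(Y_X)$ is a quotient of $\pi_1(X_c)$. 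You need no minimality argument and no diagram surgery; in particular you never have to check that deleting the star of a cone vertex leaves a disc diagram with the right boundary. The paper's proof, in exchange, stays in the disc-diagram language used throughout and closely mirrors Hoda's argument in the classical setting.

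\textbf{Small points.}
\begin{itemize}
\item Phrase the relators as conjugates, by paths in $\Theta$, of loops $f\circ\gamma$ with $\gamma$ a closed path in some $\Gamma_i$. Boundaries of cone triangles pass through tips, so they are not loops in $\Theta$. Your ``equivalently'' clause is the correct formulation.
\item You do not need $\sigma\simeq\sigma'$ in $X_c$. It suffices that $\sigma'$ is a loop in $\Theta$ and $X_c$ is simply connected.
\item Say explicitly that $Y_X$ is connected. This follows from your $\Phi$, since $\Theta$ is connected and every tip is adjacent to a vertex of $\Theta$.
\end{itemize}
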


\begin{proof}
  The proof follows the same strategy as the proof of \cite[Lemma 3.9]{HodaQuadric}, with the necessary modifications for graphical cells. 
  
  Suppose $Y_X$ is not simply connected.  
  Let $\alpha$ be a closed essential path in $Y_X$.
  Represent $\alpha$ by a cyclic sequence $\widehat{\Gamma}_0$, $v_1$, $\widehat{\Gamma}_2$, $v_3$, \ldots, $\widehat{\Gamma}_{2n-2}$, $v_{2n-1}$, $\widehat{\Gamma}_0$ of cone tips and vertices of $\Theta$, such that, for every $i$,
  the graphical cell $\Gamma_{2i}$ contains the $0$-cells $v_{2i-1}$ and $v_{2i+1}$ (indices modulo $2n$).  
  Let $\phi^j_{2i}$ be an immersed path in $\Gamma_{2i}$ from
  $v_{2i-1}$ to $v_{2i+1}$. 
  Let $\Phi_{2i}$ be the set of all such paths $\phi^j_{2i}$. 
  Let $\delta_0$, $\delta_2$, \ldots, $\delta_{2n-2}$ be any sequence with each $\delta_{2i} \in \Phi_{2i}$.  
  Then the concatenation $\delta = \delta_0 \delta_2 \cdots \delta_{2n-2}$ is a closed path in $\Theta \subset X_c$. 
  Since $X_c$ is simply connected, there is a disc diagram $D$ in $X_c$ with boundary path $\partial D = \delta$.
  
  Choose $\alpha$, $\delta_{2i}$ and $D$ so as to minimize
  the total number of $1$-cells and $2$-cells of $D$.
  Because every graphical cell $\Gamma_{2i}$ is embedded, any tip of a spur of $D$ must be
  one of the $v_{2i+1}$.  
  Let $u$ be the $0$-cell of the spur incident to this $v_{2i+1}$. Then $\widehat{\Gamma}_{2i}$, $v_{2i+1}$, $\widehat{\Gamma}_{2i+2}$,
  $u$, $\widehat{\Gamma}_{2i}$ is a $4$-cycle in $Y_X$ and so is nullhomotopic.  
  But then we can replace $v_{2i+1}$ with $u$ in $\alpha$ and replace $D$ with $D \setminus e$, where $e$ is the $1$-cell of the spur joining $u$ and $v_{2i+1}$.  
  This contradicts the minimality of our choices and therefore, we can conclude that $D$ has no spurs.
  
  Suppose $D$ has a $2$-cell. Then there is a $2$-cell $F$ with $\partial F\cap\partial D\neq\emptyset$. We remind the reader that all $2$-cells in $X_c$ are triangles with one vertex being the tip of a cone.
  But $\widehat{\Gamma}_i\not\in \delta$ for any $i$. Consequently, $\widehat{\Gamma}_F$ is internal vertex of $D$, thus there is a cycle in $D$ which corresponds to a cycle in $\Gamma_F$. 
  Let $\mathfrak F$ be the closed star in $D$ of the cone vertex $\widehat{\Gamma}_F$, that is, the union of all triangles of $D$ containing $\widehat{\Gamma}_F$.
  Let $w$ be a $0$-cell in the
  intersection of $\mathfrak{F}$ and $\partial D$, then $w$ is contained in some $\delta_{2i}$.  
  But then we can expand the vertex $\widehat{\Gamma}_{2i}$ in $\alpha$ to a subpath $\widehat{\Gamma}_{2i}$, $w$, $\widehat{\Gamma}_F$,
  $w$, $\widehat{\Gamma}_{2i}$ and replace $D$ by $D\setminus \mathfrak{F}$. The new disc diagram $D\setminus \mathfrak{F}$ contains at most one more $0$-cell and at least one less $2$-cell compared to $D$, contradicting the minimality of our choices.
  Thus, $D$ has no $2$-cells.
  
  Therefore, $D$ is a finite spurless tree, i.e., a single $0$-cell, $x$.  It
  follows that $v_{2i+1} = x$, for every $i$, so that $\alpha$ is
  $\widehat{\Gamma}_0$, $x$, $\widehat{\Gamma}_2$, $x$, \ldots, $\widehat{\Gamma}_{2n-2}$, $x$, $\widehat{\Gamma}_0$.  This path
  is clearly nullhomotopic, which is a contradiction.
\end{proof}

\begin{lm}
    Let $X$ be a simply connected graphical complex. If $X$ is
  \cftfs then its quadrization $Y_X$ is quadric.
\end{lm}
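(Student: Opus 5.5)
We verify characterization (3) of Proposition~\ref{mgchar} for $Y_X$: it is $4$-flag and simply connected, its $1$-skeleton is simplicial, and every $6$-cycle in $Y_X^1$ has a diagonal. By Lemma~\ref{lem:intersections}(1) every graphical cell embeds in $\Theta$, so Lemma~\ref{quadsc} applies and $Y_X$ is simply connected. The $4$-flag property holds by construction, since $Y_X$ is the $4$-flag completion of $\mathcal{G}_X$.

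For simpliciality, note that $Y_X^1=\mathcal{G}_X$ is bipartite, so it has no loops. A double edge would mean that a cone tip $\widehat{\Gamma}_i$ is joined twice to a vertex $v$ of $\Theta$. This would require two distinct vertices of $\Gamma_i$ with the same image $v$, which is excluded because $\Gamma_i\to\Theta$ is an embedding. Hence the edges of $\mathcal{G}_X$ from $\widehat{\Gamma}_i$ correspond bijectively to the vertices of $\Gamma_i\subseteq\Theta$.

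The main step is the $6$-cycle condition, which I split by bipartiteness. A $6$-cycle alternates between the two colour classes, so it either has three cone tips $\widehat{\Gamma}_1,\widehat{\Gamma}_2,\widehat{\Gamma}_3$ alternating with three vertices of $\Theta$, or it has three vertices $v_1,v_2,v_3$ of $\Theta$ alternating with three cone tips.

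\emph{First type.} The vertices in the cycle witness that the three cells pairwise intersect. By Lemma~\ref{lem:intersections}(3) they have a common vertex $w$. Then $w$ is adjacent in $\mathcal{G}_X$ to all three tips, and the edges from $w$ give diagonals splitting the $6$-cycle into $4$-cycles. Strictly only one diagonal is required, namely an edge between opposite vertices: the vertex opposite $\widehat{\Gamma}_1$ is a $\Theta$-vertex lying in $\Gamma_2\cap\Gamma_3$, so we need some pair $\widehat{\Gamma}_i$ and the opposite vertex, which lies in $\Gamma_j\cap\Gamma_k$, to satisfy that this vertex belongs to $\Gamma_i$. This is exactly the strong Helly property, Lemma~\ref{l:c4t4strongH}: some $\Gamma_j\cap\Gamma_k$ is contained in $\Gamma_i$, so the opposite vertex lies in $\Gamma_i$ and gives the diagonal.

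\emph{Second type.} We have $v_1,v_2,v_3$ with $v_i,v_j\in\Gamma_{ij}$ for three cells. If two of the tips coincide, the cycle is degenerate or the diagonal is immediate. Otherwise the three cells pairwise intersect, because consecutive cells in the cycle share one of the $v$'s. By Lemma~\ref{l:c4t4strongH}, some pairwise intersection, say $\Gamma_{12}\cap\Gamma_{23}$, is contained in $\Gamma_{13}$. The common vertex of $\Gamma_{12}$ and $\Gamma_{23}$ in the cycle is $v_2$, so $v_2\in\Gamma_{13}$, giving the diagonal $v_2\widehat{\Gamma}_{13}$.

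I expect the main obstacle to be checking carefully that the cycles are embedded and the cells involved are distinct, including that the diagonal vertex is genuinely opposite and the two resulting $4$-cycles are embedded so that squares exist. I would handle this using simpliciality and the embeddedness of cells.
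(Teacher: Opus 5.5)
Your proof is correct and follows the paper's argument. You verify characterization (3) of Proposition~\ref{mgchar}: $4$-flagness holds by construction, simple connectedness comes from Lemma~\ref{quadsc}, and the strong Helly property (Lemma~\ref{l:c4t4strongH}) supplies a diagonal joining opposite vertices of each embedded $6$-cycle. You additionally make explicit the simpliciality of $\mathcal{G}_X$ and the appeal to Lemma~\ref{lem:intersections}(1) needed for Lemma~\ref{quadsc}, both of which the paper leaves implicit.

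Two pieces are redundant. Your two ``types'' of $6$-cycle are the same case, since bipartiteness forces every $6$-cycle to alternate three cone tips with three vertices of $\Theta$. The detour through the common vertex $w$ from Lemma~\ref{lem:intersections}(3) can be dropped, because $w$ need not lie on the cycle and so does not give a diagonal.
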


\begin{proof}
  By construction, $Y_X$ is $4$-flag. By Lemma~\ref{quadsc}, it is simply connected. Hence, by Proposition~\ref{mgchar}, it remains to show that every embedded $6$-cycle in $Y_X^1$ has a diagonal.
  An embedded $6$-cycle in $Y_X$ corresponds to a triple of pairwise intersecting graphical cells $\Gamma_1$, $\Gamma_2$ and $\Gamma_3$ in $X$ and three $0$-cells, one contained in each of the three pairwise intersections. 
  By Lemma \ref{l:c4t4strongH}, $\Gamma_1 \cap \Gamma_2 \subset \Gamma_3$, after possibly reindexing. Then the
  $0$-cell contained in $\Gamma_1 \cap \Gamma_2$ is incident to $\Gamma_3$. Hence the $6$-cycle has a diagonal.
\end{proof}

\section{Simply connected $C(6)$ graphical complexes}\label{sec:scv_graphical}

In this section, we recall and establish several properties of simply connected \css graphical complexes that will be used later. Many of these properties are due to Osajda and Prytu{\l}a \cite{OsajdaPrytula}. In a few places, we isolate intermediate steps from their arguments and formulate them separately, in order to make the later applications more convenient. The new results in this section concern intersection assembly trees, defined below.

The following Lemma is \cite[Lemma 6.10]{OsajdaPrytula}. It is the \css analogue of Lemma \ref{lem:intersections}.

\begin{lm}\label{lem:intersectionsc6}
Let $X$ be a simply connected \css graphical complex associated to a graphical presentation $\langle f :\Gamma \rightarrow \Theta \rangle$.
Then the following hold:
  \begin{enumerate}
  \item  For every graphical cell $\Gamma_i$, the map
    $\Gamma_i \to \Theta$ is an embedding.
  \item The intersection of (the images of) any two
    graphical cells is either empty or it is a finite tree.
  \item If three graphical cells pairwise intersect then
    they triply intersect and the intersection is a finite tree.		
  \end{enumerate}
\end{lm}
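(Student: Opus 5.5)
We follow the template of the proof of Lemma~\ref{lem:intersections}, replacing the \cftfs combinatorics of \cite{HodaQuadric} with the $C(6)$ disc diagram combinatorics. The basic input is Greendlinger-type curvature information: by Proposition~\ref{reduceddiagram}, a reduced disc diagram $D\to X_t$ is a $C(6)$ diagram. Hence, by the standard combinatorial Gauss--Bonnet argument, every nonsingular $C(6)$ diagram with at least two $2$-cells has at least two boundary $2$-cells whose outer boundary path is a concatenation of at most three pieces in $D$. Since the $C(6)$ condition has no $T(q)$ companion, we do not use vertex degrees. All counting is done on faces, and interior vertices are allowed to have degree $3$.

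For (1), suppose $\Gamma_1\to\Theta$ is not an embedding. Choose distinct $v,v'\in\Gamma_1$ with the same image, a path $\gamma$ in $\Gamma_1$ between them, and a reduced diagram $D$ for the image loop $\gamma_1$. Among all such choices we minimise the number of cells. As in Lemma~\ref{lem:intersections}(1), minimality forces $\gamma$ and $\gamma_1$ to be immersed and forces $D$ to be nonsingular. Path-cycle extensibility gives a $2$-cell $F_1$ with boundary $\gamma_1\alpha_1$, and we form $D\cup F_1$.

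If $D\cup F_1$ is reduced, it is a $C(6)$ disc diagram. Its boundary is $\alpha_1$ together with the single vertex $v_{11}$, so the boundary consists of one face $F_1$ meeting the rest along $\gamma_1$. Now apply Greendlinger to $D$ itself. The faces of $D$ along $\gamma_1$ are adjacent to $F_1$ along pieces, so their outer boundary arcs are single pieces. Since $v_{11}$ is a single point, at most one face of $D$ can use it as an extra break. Each such face therefore has at most $2$ pieces on the part of its boundary exposed in $D$, plus its interior pieces. The curvature count then forces two shells each having at most $3+1$ pieces total, which contradicts $C(6)$. If $D$ is a single cell $F$, then $\partial F$ is covered by one piece shared with $F_1$ together with a cut at $v_{11}$, which again contradicts $C(6)$.

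If $D\cup F_1$ is not reduced, the reducing piece lies on $\gamma_1$ and is shared with a cell $F'$ mapping to $\Gamma_1$. Exactly as in Lemma~\ref{lem:intersections}(1), we reroute $\gamma$ through $\partial F'\setminus P$ inside $\Gamma_1$ and obtain a smaller counterexample.

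For (2), we first prove that the intersection of two graphical cells is connected. We take vertices in distinct components of $\Gamma_1\cap\Gamma_2$, paths $\gamma(1)\subset\Gamma_1$ and $\gamma(2)\subset\Gamma_2$ joining them, and a minimal reduced diagram $D$. We then extend by $F_1,F_2$. In the reduced case, the diagram $D\cup F_1\cup F_2$ has all its interior faces bordering $F_1$ or $F_2$ in single pieces, with only two corner vertices available as extra breaks. The same counting as in (1) gives a face covered by fewer than six pieces, a contradiction. In the non-reduced case, we reroute as before. Acyclicity follows directly from $C(6)$: a cycle in $\Gamma_1\cap\Gamma_2$ is a cycle in $\Gamma_1$ that is a single piece.

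For (3), we argue in the same way with three cells. If $\Gamma_1,\Gamma_2,\Gamma_3$ pairwise intersect but have no common vertex, we pick minimal paths $\gamma(i)$ joining the pairwise intersections, take a minimal reduced diagram $D$ for the resulting triangle, and add $F_1,F_2,F_3$. If $D$ is empty, the resulting three-cell configuration is already a reduced diagram, and $F_1$ meets the others in two pieces, but we still need a cycle in $\Gamma_1$ covered by fewer than six pieces. For this we extend $F_i$ so that the boundary cycle passes through both intersections. Then $\partial F_1$ consists of the piece shared with $F_2$, the piece shared with $F_3$, and the outer part, which a priori is not covered by few pieces.

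This step is where we expect the main obstacle. The correct contradiction has to come from the $T$-free Greendlinger count on the $6$-gon region rather than from a single face. Following \cite[Lemma 6.10]{OsajdaPrytula}, we will apply the curvature count to $D$ itself: the boundary of $D$ is a concatenation of three arcs, each lying in one cell, so $D$ is a ``triangle'' diagram. By the $C(6)$ classification of such diagrams, either $D$ degenerates, giving a common vertex, or some face on the $\gamma(i)$ side contradicts the minimality of the chosen paths. We expect to import that classification rather than rederive it.
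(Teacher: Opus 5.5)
The paper does not prove this lemma; it quotes \cite[Lemma 6.10]{OsajdaPrytula}. Your template is the natural $C(6)$ analogue of the proof of Lemma~\ref{lem:intersections}: a minimal counterexample diagram $D$, extension by the cells $F_i$, rerouting in the non-reduced case, and a Greendlinger shell count in the reduced case. With it, parts (1) and (2) go through, provided you fix your statement of Greendlinger's lemma. It is the \emph{inner} path of a shell (the part of its boundary not on $\partial D$) that is a concatenation of at most three pieces, not the outer one. The count is then as follows. Each maximal subarc of a shell's outer path lying in one $\gamma(i)$ is a piece shared with $F_i$. A shell with no corner vertex in the interior of its outer path is covered by at most $3+1$ pieces. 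A shell with one such corner is covered by at most $3+2$ pieces. Both are below six.

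The genuine gap is part (3). You leave it unproved and propose to import a classification, and both routes you sketch there are misdirected.
\begin{itemize}
\item \textbf{$D$ has no $2$-cells.} There is nothing to contradict here. Choose each $\gamma(i)$ injective in $\Gamma_i$ and choose the corners to minimise $D$. Then the closed path $\gamma(1)\gamma(2)\gamma(3)$ in a tree is a tripod whose centre lies on all three $\gamma(i)$, hence in $\Gamma_1\cap\Gamma_2\cap\Gamma_3$.
\item \textbf{Looking at $\partial F_1$.} This can never help, since the outer arc of $F_1$ is free.
\end{itemize}

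The contradiction comes from exactly the count you already used in (1) and (2), applied to $D$. In the reduced case, the outer path of a shell is covered by $1+c$ pieces, where $c$ is the number of corners among $v_{12},v_{23},v_{13}$ in its interior. A nonsingular $C(6)$ diagram with at least two $2$-cells has at least two shells of degree at most $3$; in the ladder case these are the two end cells, of degree $1$. There are only three corners, so some shell has $c\le 1$. Its boundary is then covered by at most $3+2=5$ pieces, contradicting $C(6)$. If $D$ is a single cell, its boundary is covered by three pieces, again a contradiction.

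You also need to exclude cut vertices and spurs of $D$ by moving corners: a boundary vertex lying on both $\gamma(i)$ and $\gamma(j)$ lies in $\Gamma_i\cap\Gamma_j$, so it can replace the corresponding corner and shrink $D$. Finally, you do not show that the triple intersection is a tree. It equals $(\Gamma_1\cap\Gamma_2)\cap(\Gamma_1\cap\Gamma_3)$. If it had two components, joining them inside each of these two trees would give an embedded cycle in $\Gamma_1$ covered by two pieces, which $C(6)$ forbids.
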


Unlike \cftfs graphical complexes, \css graphical complexes do not satisfy the strong Helly property in general; for instance, one may consider the regular tiling of the plane by hexagons. The proof of the Helly property in the $C(6)$ case therefore uses a different argument, due to \cite{OsajdaPrytula}. 
We shall need the following intermediate step from their proof.

\begin{pr}\label{relatortrees}
Let $X$ be a simply connected \css graphical complex, and let
$\{\Gamma_i \to \Theta\}_{i \in I}$ be a collection of pairwise intersecting graphical cells.
Then, for every $i\in I$, the subgraph
\[
T_i:=\Gamma_i\cap \bigcup_{j\in I\setminus\{i\}}\Gamma_j
=\bigcup_{j\in I\setminus\{i\}}(\Gamma_i\cap \Gamma_j)
\]
is a finite tree.
\end{pr}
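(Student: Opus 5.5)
We need to show $T_i$ is a finite tree. It is finite since $\Gamma_i$ is a finite graph (components of $\Gamma$ are finite) and $T_i\subseteq\Gamma_i$ embeds in $\Theta$ by Lemma~\ref{lem:intersectionsc6}(1). Each $\Gamma_i\cap\Gamma_j$ is a nonempty finite tree by Lemma~\ref{lem:intersectionsc6}(2). For $j,k\neq i$, Lemma~\ref{lem:intersectionsc6}(3) gives $\Gamma_i\cap\Gamma_j\cap\Gamma_k\neq\emptyset$, so the trees $\Gamma_i\cap\Gamma_j$ and $\Gamma_i\cap\Gamma_k$ meet. Hence $T_i$ is connected. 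Since $\Gamma_i$ is finite, only finitely many distinct subgraphs $\Gamma_i\cap\Gamma_j$ occur, so we may assume the union is over a finite subfamily. It remains to show that $T_i$ contains no embedded cycle.

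Suppose $c$ is an embedded cycle in $T_i$. We write $c$ as a concatenation of maximal subpaths $c_1,\dots,c_m$, each lying in a single tree $\Gamma_i\cap\Gamma_{j_s}$. We choose this decomposition, and the cycle $c$ itself, to minimize $m$. Each $c_s$ is a path in $\Gamma_i\cap\Gamma_{j_s}$ with $\Gamma_{j_s}\neq\Gamma_i$ (the presentation is simplified and cells embed), so $c_s$ is a piece. Thus $c$ is an immersed cycle in $\Gamma_i$ that is a concatenation of $m$ pieces, and the $C(6)$ condition forces $m\ge 6$. Note $m\ge 2$, since a single intersection is a tree. The goal is to derive a contradiction by producing a reduced disc diagram that violates $C(6)$, in the spirit of the proofs of Lemma~\ref{lem:intersections} and Lemma~\ref{l:c4t4strongH}.

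Concretely, for consecutive indices $s,s+1$, the cells $\Gamma_{j_s}$, $\Gamma_{j_{s+1}}$ and $\Gamma_i$ pairwise intersect, so by Lemma~\ref{lem:intersectionsc6}(3) they share a vertex. Consider the vertices $x_s=c_s\cap c_{s+1}$. Inside $\Gamma_{j_s}$, join $x_{s-1}$ to $x_s$ by an alternative path $\delta_s$ that leaves $\Gamma_i$ where possible. Consider the closed path $\delta=\delta_1\cdots\delta_m$ together with $c$. We fill $\delta$ by a minimal-area reduced disc diagram $D$ using Lemma~\ref{l:vank}, and glue in $2$-cells $F_s$ in $\Gamma_{j_s}$ bounded by cycles through $c_s\cup\delta_s$ (this uses path-cycle extensibility) and a $2$-cell $F$ in $\Gamma_i$ along $c$. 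The result is a diagram where $F$ is a $2$-cell whose boundary is covered by only $m$ pieces, surrounded by a ring of cells.

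The argument then splits according to $m$. If $m<6$, the outer boundary already contradicts $C(6)$ via Proposition~\ref{reduceddiagram}. If $m\ge 6$, we instead use the fact that every pair $\Gamma_{j_s},\Gamma_{j_t}$ also intersects, together with a common point in $\Gamma_i$ (Lemma~\ref{lem:intersectionsc6}(3) again). This lets us find a shortcut through $\Gamma_i\cap\Gamma_{j_s}\cap\Gamma_{j_t}$ for nonconsecutive $s,t$. That shortcut splits $c$ into two cycles in $T_i$, each using fewer maximal intersection segments, which contradicts the minimality of $m$. A remaining degenerate case is when the triple intersections all lie on $c$ only at the junction points. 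There we use the diagram for three cells $F,F_s,F_t$ with an interior vertex of degree $3$; since $C(6)$ complexes satisfy $T(3)$ trivially, this must instead be handled through the hexagonal-tiling-type Euler characteristic count (Greendlinger-type lemma) for $C(6)$ diagrams.

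The main obstacle is this last step: making the shortcut argument rigorous. Specifically, one must show that the common vertex of $\Gamma_i\cap\Gamma_{j_s}\cap\Gamma_{j_t}$ can be connected inside $T_i$ so as to genuinely reduce $m$, and must ensure that reducibility of the glued diagram can only occur along the $c_s$. I would first try to pick $s,t$ at distance $2$ along the cyclic order and argue via the tree structure of $\Gamma_i\cap\Gamma_{j_t}$ that the connecting path avoids creating new segments.
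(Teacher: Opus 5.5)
\textbf{There is a genuine gap: the acyclicity of $T_i$ is never proved.} Your finiteness and connectedness arguments are fine, but acyclicity is the actual content of the proposition.

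\begin{itemize}
\item The disc diagram you glue around $c$ is never used to reach a contradiction, and its reducedness is exactly what would need checking.
\item The shortcut step is only asserted.
\item The ``degenerate case'' is deferred to an unspecified Greendlinger-type count.
\item The shortcut as stated does not follow. Cutting $c$ along a chord through $w\in\Gamma_i\cap\Gamma_{j_s}\cap\Gamma_{j_t}$ gives two closed paths that need be neither embedded nor homotopically nontrivial. So ``two cycles in $T_i$, each with fewer segments'' is unjustified. Minimality of $m$ yields nothing until you say which closed path survives and why its segment count does not grow.
\end{itemize}

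\textbf{The missing idea: no diagrams are needed.} Simple connectedness enters only through Lemma~\ref{lem:intersectionsc6}; the rest is combinatorics inside the graph $\Gamma_i$.

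Your shortcut can be completed as follows. For $m\ge 4$, choose $w\in\Gamma_i\cap\Gamma_{j_1}\cap\Gamma_{j_3}$. Let $\delta_1\subseteq\Gamma_i\cap\Gamma_{j_1}$ run from the initial vertex $x_0$ of $c_1$ to $w$. Let $\delta_3\subseteq\Gamma_i\cap\Gamma_{j_3}$ run from $w$ to the terminal vertex of $c_3$.
\begin{itemize}
\item In $\pi_1(\Gamma_i,x_0)$ we have $c\simeq L_1L_2$, where $L_1=c_1c_2c_3\bar\delta_3\bar\delta_1$ and $L_2=\delta_1\delta_3c_4\cdots c_m$.
\item After merging cyclically adjacent paths in the same tree, $L_1$ and $L_2$ are cyclic concatenations of $3$ and $m-1$ tree paths respectively.
\item Since $c$ is embedded, it is nontrivial in $\pi_1(\Gamma_i)$, so one of $L_1,L_2$ is nontrivial.
\item Cyclically reducing that loop, and then extracting an embedded cycle, does not increase the number of tree segments. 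This contradicts the minimality of $m$.
\end{itemize}
Hence the minimal $m$ is at most $3$, which contradicts $C(6)$ directly. The position of $w$ plays no role, so there is no degenerate case.

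\textbf{An equivalent route.} Write $A_j=\Gamma_i\cap\Gamma_j$.
\begin{itemize}
\item By Lemma~\ref{lem:intersectionsc6}(3), each $A_j\cap A_k$ is a nonempty tree.
\item Suppose some $A_j\cap A_k\cap A_l$ were empty. Join chosen points of the pairwise intersections by geodesics in $A_j,A_k,A_l$. The resulting loop is nontrivial in $\pi_1(\Gamma_i)$: otherwise its lift to the universal covering tree would be a tripod whose center projects into $A_j\cap A_k\cap A_l$. Reducing it gives an immersed cycle covered by at most three pieces, contradicting $C(6)$.
\item So all triple intersections are nonempty. Add the $A_j$ one at a time: each new $A_r$ meets the previous union in a union of pairwise intersecting subtrees of $A_r$, which is connected. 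Hence the union remains a tree.
\end{itemize}
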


We call $T_i$ the \textbf{intersection assembly tree} in $\Gamma_i$ associated to the collection $\{\Gamma_i \to \Theta\}_{i \in I}$.

\begin{rem}\label{piecesinrelatortrees}
    For any two vertices $u,v$ of the intersection assembly tree $T$, there exists $j,k\in I\setminus\{i\}$ such that $\Gamma_i\cap(\Gamma_{j}\cup \Gamma_{k})$ is a tree containing $u,v$. In particular, any immersed path in $T$ can be covered by at most two pieces.
\end{rem}

The following Helly property is \cite[Lemma 6.11]{OsajdaPrytula}. We include the short proof using Proposition \ref{relatortrees}, in order to fix the terminology of intersection assembly trees. 

\begin{lm}[Helly property for \cs]\label{lem:hellyc6}
Let  $X$ be a simply connected \css graphical complex associated to a graphical presentation $\langle f :\Gamma \rightarrow \Theta \rangle$.
Consider a collection $\{\Gamma_i \to \Theta\}_{i \in I}$ of graphical cells. If for every $i,j \in I$ the intersection $\Gamma_i \cap \Gamma_j$ is non-empty then the intersection $\bigcap\limits_{i \in I} \Gamma_i$ is a non-empty tree.
\end{lm}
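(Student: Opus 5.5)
The approach is to use the intersection assembly trees from Proposition~\ref{relatortrees}, combined with the $C(6)$ condition, to show that some graphical cell of the collection contains the pairwise intersections of all the others. Helly then reduces to a finite problem and, ultimately, to Lemma~\ref{lem:intersectionsc6}(3). First I handle the finite case $|I|=n$ by induction on $n$. The cases $n\le 2$ are trivial, and $n=3$ is Lemma~\ref{lem:intersectionsc6}(3). For $n\ge 4$, I apply the induction hypothesis to the subcollections $I\setminus\{i\}$. Each such subcollection has a nonempty intersection, and that intersection is a tree because it is an intersection of trees inside the embedded cells (Lemma~\ref{lem:intersectionsc6}(1),(2)). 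Intersections of subtrees of a tree are connected, so it remains only to prove non-emptiness.

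For non-emptiness with $n\ge 4$, pick any $i\in I$ and consider the intersection assembly tree $T_i\subseteq\Gamma_i$. It is the union of the subtrees $S_j:=\Gamma_i\cap\Gamma_j$ for $j\ne i$. By Lemma~\ref{lem:intersectionsc6}(3), these subtrees pairwise intersect: $\Gamma_i,\Gamma_j,\Gamma_k$ pairwise meet, hence they triply meet. Subtrees of a tree satisfy the Helly property, because the tree $T_i$ is a median graph. Applying this inside $T_i$, the family $\{S_j\}_{j\ne i}$ has a common vertex, which lies in $\bigcap_{j\in I}\Gamma_j$. Here the finite tree assumption from Proposition~\ref{relatortrees} is exactly what is used. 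One must check that the $S_j$ are subtrees of the single tree $T_i$; this holds because $T_i$ is a tree and the $S_j$ are connected subgraphs of it. Note that this argument already works directly for arbitrary $n$ once $T_i$ is known to be a finite tree, so the induction is only a safety net.

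For infinite $I$, I use finiteness. Fix $i_0\in I$. The sets $S_j=\Gamma_{i_0}\cap\Gamma_j$ are pairwise intersecting subtrees of the finite tree $T_{i_0}$ given by Proposition~\ref{relatortrees}. Only finitely many distinct subtrees occur, so the finite Helly property for subtrees of a tree gives a common vertex. Connectedness of the total intersection follows as before, since it is an intersection of subtrees of the finite tree $\Gamma_{i_0}$.

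The main obstacle is Proposition~\ref{relatortrees} itself, specifically that $T_i$ is connected and acyclic. I am allowed to assume it. If I had to verify it, acyclicity would be the delicate point: a cycle in $T_i$ would be covered by pieces $\Gamma_i\cap\Gamma_j$, and one would need a minimal-area reduced diagram argument to bound the count below six, contradicting $C(6)$.
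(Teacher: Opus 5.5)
Your argument is correct and is essentially the paper's proof. You fix one cell $\Gamma_{i_0}$, use Lemma~\ref{lem:intersectionsc6}(3) to see that the sets $\Gamma_{i_0}\cap\Gamma_j$ are pairwise intersecting subtrees of the finite intersection assembly tree $T_{i_0}$ from Proposition~\ref{relatortrees}, and apply the Helly property for subtrees of a finite tree; the induction on $|I|$ is indeed superfluous.

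There are two small slips to fix. First, connectedness of the total intersection must be deduced inside the tree $T_{i_0}$, not inside $\Gamma_{i_0}$, which is not a tree and in which intersections of subtrees can be disconnected. Second, your opening description of the strategy (some cell containing the pairwise intersections of all the others) is the strong Helly property, which fails for $C(6)$; your actual argument never uses it.
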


\begin{proof}

Let $0\in I$, and let $T := \Gamma_0\cap \bigcup_{i \in I\setminus \{0\}} \Gamma_i$ be the intersection assembly tree in $\Gamma_0$ associated to the collection $\{\Gamma_i \to \Theta\}_{i \in I}$.
By Lemma~\ref{lem:intersectionsc6}, all intersections $\{\Gamma_0 \cap \Gamma_i\}_{i \in I\setminus \{0\}}$ are pairwise intersecting, non-empty subtrees of $T$. 
Since $T$ is a finite tree, the Helly property for subtrees applies to arbitrary collections of subtrees of $T$. Hence, $\bigcap_{i\in I}\Gamma_i=\bigcap_{i\in I\setminus\{0\}}(\Gamma_0\cap \Gamma_i)$ is a non-empty tree.
\end{proof}

\begin{lm}\label{lem:relatortreesintersections}
Let $X$ be a simply connected \css graphical complex associated to a graphical presentation $\langle f :\Gamma \rightarrow \Theta \rangle$ and $\Gamma_i$ be a graphical cell.
Then the intersection of any two intersection assembly trees in $\Gamma_i$ is either empty or a finite tree.
Moreover, if $\Gamma_i$ is not essentially $C(6)$ then any triple of intersection assembly trees in $\Gamma_i$ that pairwise intersects, triply intersect and the intersection is a finite tree.		
\end{lm}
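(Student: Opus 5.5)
Both parts will be read off from the tree structure of $\Gamma_i$ restricted to its intersection assembly trees, with the $C(6)$ condition used to exclude the one bad configuration. Throughout, an intersection assembly tree in $\Gamma_i$ means $T=\Gamma_i\cap\bigcup_{j\in J}\Gamma_j$ for some collection $\{\Gamma_i\}\cup\{\Gamma_j\}_{j\in J}$ of pairwise intersecting graphical cells. By Proposition~\ref{relatortrees} each such $T$ is a finite subtree of the embedded graph $\Gamma_i$ (Lemma~\ref{lem:intersectionsc6}(1)).

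For the first claim, take two intersection assembly trees $T,T'\subseteq\Gamma_i$. Their intersection is a subgraph of a finite tree, so it has no cycles, and it only remains to prove connectedness. Suppose $u,v\in T\cap T'$ lie in different components of $T\cap T'$. Let $\alpha$ be the unique immersed path from $u$ to $v$ in $T$, and $\alpha'$ the one in $T'$. Since $u,v$ are in different components, $\alpha\neq\alpha'$. Then $\alpha\cup\alpha'$ contains an immersed cycle $C$ in $\Gamma_i$: take a maximal subpath of $\alpha$ leaving $T'$ together with the corresponding subpath of $\alpha'$. By Remark~\ref{piecesinrelatortrees}, any immersed subpath of $T$ is covered by at most two pieces, and likewise for $T'$. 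Hence $C$ is a concatenation of at most four pieces, contradicting $C(6)$. So $T\cap T'$ is empty or a finite tree.

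For the second claim, let $T_1,T_2,T_3\subseteq\Gamma_i$ be pairwise intersecting intersection assembly trees, and suppose $T_1\cap T_2\cap T_3=\emptyset$. Choose $x_{jk}\in T_j\cap T_k$. The three immersed paths $x_{12}\to x_{13}$ in $T_1$, $x_{13}\to x_{23}$ in $T_3$ and $x_{23}\to x_{12}$ in $T_2$ concatenate to a closed path. I would pick the $x_{jk}$ to minimise the total length, exactly as in the proof of Lemma~\ref{l:c4t4strongH}. The aim is to show that this makes the closed path an immersed cycle $C$ in $\Gamma_i$: any backtracking at a junction produces a point of the triple intersection or allows the total length to be shortened. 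This is the step I expect to need the most care, since it is where the emptiness of the triple intersection has to be used. Granting it, each of the three sides is covered by at most two pieces (Remark~\ref{piecesinrelatortrees}), so $C$ is covered by at most six pieces. $C(6)$ forbids fewer than six, so exactly six occur, and $\Gamma_i$ is essentially $C(6)$ by Definition~\ref{df:essentially}. This contradicts the hypothesis, so the triple intersection is non-empty. By the first part it is a finite tree, since $T_1\cap T_2\cap T_3$ is an intersection of the tree $T_1\cap T_2$ with $T_3$ inside $\Gamma_i$.

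One caveat concerns the count in the second part: Definition~\ref{df:essentially} is phrased as "concatenation of exactly $p$ pieces", so when pieces overlap I need the covering count to translate into a decomposition. The plan is to trim overlapping pieces so that they become consecutive subpaths; subpaths of pieces are again pieces, so $C$ becomes a concatenation of at most six pieces, and $C(6)$ forces exactly six.
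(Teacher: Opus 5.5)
Your proposal is correct and follows the paper's proof. In both parts you build an immersed cycle in $\Gamma_i$ from two (resp.\ three) paths in intersection assembly trees, each covered by at most two pieces by Remark~\ref{piecesinrelatortrees}, and contradict $C(6)$ (resp.\ the assumption that $\Gamma_i$ is not essentially $C(6)$); your minimal-total-length choice and the trimming remark only make explicit what the paper compresses into ``without loss of generality''.

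Two small precisions. First, in the first part the endpoints of a maximal subpath of $\alpha$ leaving $T'$ need not lie on $\alpha'$, so close the cycle with the path in $T'$ between them, or take a subpath leaving $\alpha'$. Second, for the final tree claim the first part does not literally apply to $T_1\cap T_2$, which need not be an intersection assembly tree; instead note that $T_1\cap T_2$ and $T_1\cap T_3$ are subtrees of the tree $T_1$, so their intersection is connected.
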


\begin{proof}
Let $T,T'$ be two intersection assembly trees in $\Gamma_i$ with non-empty intersection. Since both of them are finite trees, the intersection $T\cap T'$ is a forest. 
Assume that $T\cap T'$ contains two disjoint connected components $S,S'$. 
Choose any $u\in S$, $v\in S'$. There are paths $P\to T$, $P'\to T'$  joining $u$ with $v$. Since $S$ and $S'$ are disjoint connected components of $T\cap T'$. 
Without loss of generality, we can assume that the concatenation $PP'$ is an immersed cycle in $\Gamma_i$.
By Remark \ref{piecesinrelatortrees} paths in $T$ (resp. $T'$) between any point of $S$ and any point of $S'$ can be covered by at most two pieces. 
Consequently both $P$ and $P'$ can be covered by at most two pieces each, and cycle $PP'$ can be covered by at most four pieces, a contradiction to condition \cs. 
Therefore, $T\cap T'$ has a single connected component and is a subtree of a finite tree.

Let $T_1,T_2,T_3$ be three intersection assembly trees in $\Gamma_i$ with pairwise non-empty intersection. 
As discussed in the previous paragraph, each pairwise intersection is a finite tree. Assume that the triple intersection $T_1\cap T_2\cap T_3$ is empty. Take vertices $v_1,v_2,v_3$ such that $v_i\notin T_i$ and $v_1\in T_2\cap T_3$, $v_2\in T_1\cap T_3$ and $v_3\in T_1\cap T_2$.
There are paths $P_1\to T_1$ joining $v_2$ with $v_3$, $P_2\to T_2$ joining $v_1$ with $v_3$ and $P_3\to T_3$ joining $v_1$ with $v_2$. Without loss of generality, we can assume that concatenation $P_1P_2P_3$ is an immersed cycle in $\Gamma_i$.
Again, these paths can be covered by at most two pieces each, and cycle $P_1P_2P_3$ can be covered by at most six pieces, a contradiction to the fact that $\Gamma_i$ is not essentially  \cs. Therefore, the triple intersection is non-empty. To show that it is a finite tree, repeat the reasoning from the first part of the proof to the intersections of $T_1\cap T_2$ and $T_2\cap T_3$.

\end{proof}

\begin{lm}[Helly property for intersection assembly trees]\label{lem:hellyrelatortrees} Let $X$ be a simply connected \css graphical complex associated to a graphical presentation  $\langle f :\Gamma \rightarrow \Theta \rangle$  and $\Gamma_0$ be a graphical cell which is not essentially \cs.
Consider a collection of intersection assembly trees $T_i, i\in I$ in $\Gamma_0$. 
If for every $k,l$ the intersection $T_k \cap T_l$ is non-empty then the intersection $\bigcap\limits_{i\in I} T_i$ is a non-empty tree.
\end{lm}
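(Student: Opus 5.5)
We will reduce the statement to the classical Helly property for subtrees of a finite tree, using Lemma~\ref{lem:relatortreesintersections} as the key input. The main step is to show that the trees $T_i$ pairwise and triply intersect inside one common finite tree.

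First, observe that every intersection assembly tree $T_i$ is a subtree of the finite graph $\Gamma_0$, which embeds in $\Theta$ by Lemma~\ref{lem:intersectionsc6}(1). Since $\Gamma_0$ is finite, it has only finitely many subgraphs. Hence the collection $\{T_i\}_{i\in I}$ contains only finitely many distinct members, and we may assume $I$ is finite, say $I=\{1,\dots,n\}$. We then argue by induction on $n$, proving the stronger statement that any finite family of pairwise intersecting intersection assembly trees in $\Gamma_0$ has a non-empty intersection which is a tree.

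The case $n=1$ is immediate. The case $n=2$ is the first part of Lemma~\ref{lem:relatortreesintersections}, and the case $n=3$ is its second part, which uses that $\Gamma_0$ is not essentially \cs. For $n\ge 4$ the obstacle is that $T_1\cap T_2$ need not itself be an intersection assembly tree, so the induction hypothesis does not apply to it directly. We avoid this by working inside the finite tree $T_1$. For $k\ge 2$ set $S_k:=T_1\cap T_k$. By the $n=2$ case each $S_k$ is a non-empty subtree of $T_1$. By the $n=3$ case, for $k,l\ge 2$ we have $S_k\cap S_l=T_1\cap T_k\cap T_l\neq\emptyset$. So $\{S_k\}_{k\ge 2}$ is a family of pairwise intersecting subtrees of the finite tree $T_1$. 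The classical Helly property for subtrees of a tree, already invoked in the proof of Lemma~\ref{lem:hellyc6}, then gives that $\bigcap_{k\ge2}S_k=\bigcap_{i\in I}T_i$ is a non-empty subtree. In fact this argument handles all $n$ at once, so no separate induction is needed.

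The only step needing care is the reduction to finitely many distinct trees, or alternatively a direct check that the classical Helly property holds for arbitrary families of subtrees of a finite tree. Both are routine: a finite tree has finitely many subtrees, so any family is effectively finite. The genuine content, namely the triple intersection property, is already contained in Lemma~\ref{lem:relatortreesintersections}. That is precisely where the hypothesis that $\Gamma_0$ is not essentially \css enters.
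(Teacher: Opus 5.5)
Your proposal is correct and follows essentially the same route as the paper's proof. The paper also reduces to finitely many trees, fixes one $T_j$, and uses Lemma~\ref{lem:relatortreesintersections} to see that the sets $T_j\cap T_i$ are pairwise intersecting non-empty subtrees of $T_j$. It then applies the Helly property for subtrees of a finite tree.
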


\begin{proof}
Choose $j\in I$. Since $T_j$ is finite tree, it has finitely many subtrees and without loss of generality we can assume that $I$ is finite.
By Lemma~\ref{lem:relatortreesintersections} all intersections $\{T_j \cap T_i\}_{i\in I\setminus\{j\}}$ are pairwise intersecting, non-empty subtrees of a tree $T_j$.
Therefore, by the Helly property for trees $\bigcap\limits_{i\in I} T_i$ it is a non-empty tree.
\end{proof}

We remind the reader that we are especially interested in \textbf{torsion-essentially \cs-free} graphical small cancellation complexes. 
Such a complex $X$ is a \css graphical small cancellation complex where every finite order automorphism with no fixed point on the $1$-skeleton of $X$ is not essentially $C(6)$.

The torsion-essentially \css-free condition is automatic, for example, if every essentially \css graphical cell has the property that every automorphism of its $1$-skeleton fixes a point. 
For example it can be a $\Theta$ graph (i.e. a graph obtained by subdivision of edges of graph consisting of three edges joining a single pair of vertices), with uneven sides. 
Another type of examples of such complexes are complexes where some automorphisms of $1$-skeleton of essentially \css cell are fixed point free, but do not extend to an automorphism of whole complex. Basic example is septagon glued to 6 other cells, 5 of which intersect along single edge and one intersecting along two edges.

\section{Systolic complexes}\label{sec:sys}

In this section, we first recall the definition of systolic complexes, then we describe the Wise complex of a non-thickened graphical complex. Finally, we show that the Wise complex of a simply connected non-thickened graphical \css small cancellation complex is systolic.

The study of groups acting on systolic complexes was introduced by T. Januszkiewicz and J. Świątkowski \cite{JŚ06} and independently by Haglund \cite{syshag}.
Systolic complexes might be seen as a simplicial version of the cubical combinatorial nonpositive curvature theory of $\mathrm{CAT(0)}$ cube complexes. We follow here the notation of \cite{JŚ06}.

\begin{df}
Let $X$ be a simplicial complex. 
$X$ is $k$-\textit{large} if $X$ is flag and every cycle in $X$ of length less than $k$ has a diagonal, i.e. in $X$ there is an edge connecting nonconsecutive vertices of the cycle.
\end{df}

\begin{df}
A simplicial complex $X$ is \textit{systolic} if $X$ is simply connected, and the links of all vertices of $X$ are $6$-large.
\end{df}

\begin{df}\label{def:unionofconecells} Let $X$ be a simply connected \css graphical complex associated to a graphical presentation $\langle f :\Gamma \rightarrow \Theta \rangle$.
Assume that $X$ is the union of its graphical cells,  i.e.\ $\Theta = f(\Gamma)$.
Let ${\bf U}$ be the covering of $X$ by these graphical cells. Define the simplicial complex $W(X)$ to be the nerve of the covering ${\bf U}$. 
This complex was introduced by D.\ Wise in the classical $C(p)$ setting \cite{Wise}, therefore we will refer to $W(X)$ as the \emph{Wise complex}. 
\end{df}

\begin{thm}\label{t:ss}
\cite[Theorem 7.10]{OsajdaPrytula}
Let $X$ be a simply connected \css graphical complex satisfying the conditions of definition \ref{def:unionofconecells}. Then its Wise complex $W(X)$ is systolic.
\end{thm}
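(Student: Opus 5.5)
Since this theorem is quoted from \cite[Theorem 7.10]{OsajdaPrytula}, the plan is to follow their strategy. We verify the two defining conditions of systolicity for $W(X)$: simple connectedness, and $6$-largeness of vertex links. Equivalently, we show that $W(X)$ is simply connected and \emph{locally $6$-large}, meaning flag with every cycle of length $4$ or $5$ in a link having a diagonal. By \cite{JŚ06}, a simply connected locally $6$-large complex is systolic, and in fact $6$-large.

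\textbf{Flagness.} Let $\Gamma_{i_0},\dots,\Gamma_{i_k}$ be graphical cells whose vertices in $W(X)$ are pairwise adjacent. Then the cells pairwise intersect. The Helly property for \cs\ (Lemma~\ref{lem:hellyc6}) gives a common point, so they span a simplex of the nerve.

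\textbf{Simple connectedness.} I would compare $W(X)$ with $X_c$ through the nerve theorem. The covering $\mathbf U$ is by graphical cells, or better by their open stars, thickened to cones in $X_c$. By Lemma~\ref{lem:intersectionsc6} each cell is embedded, and every nonempty finite intersection is a finite tree, hence contractible. The multiple-intersection case follows from Lemma~\ref{lem:hellyc6}, since intersections of trees in a tree are trees. Since $\Theta=f(\Gamma)$, the cells together with their cones cover $X_c$. The nerve theorem then gives $W(X)\simeq X_c$, so $W(X)$ is simply connected. A small technical point is to pass to open neighbourhoods deformation retracting onto the cones, so that the nerve lemma applies.

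\textbf{Links.} Let $\sigma$ be a simplex with $\bigcap\sigma=S$, and suppose $\Gamma_1,\dots,\Gamma_m$ with $m\in\{4,5\}$ form a cycle in the link of $\sigma$ without diagonals. Then consecutive cells $\Gamma_j$ and $\Gamma_{j+1}$ meet inside each cell of $\sigma$, while non-consecutive ones have no common point with $S$. Using Lemma~\ref{lem:intersectionsc6}(3), choose points $v_j\in\Gamma_j\cap\Gamma_{j+1}\cap S'$, where $S'$ is a cell of $\sigma$ (or any cell if $\sigma=\emptyset$ is handled globally). Join consecutive points $v_{j-1}$ and $v_j$ by paths inside $\Gamma_j$. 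This yields a closed path bounding a disc diagram. Adding the cells $\Gamma_j$, possibly together with a central cell from $\sigma$, we obtain a diagram with an interior configuration of at most $5$ cells around a vertex or cell.

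The $C(6)$ condition, together with the standard curvature count for reduced $C(6)$ disc diagrams (Greendlinger-type lemmas / combinatorial Gauss--Bonnet), forbids such a configuration. The central cell would have boundary made of at most $5$ pieces. Alternatively, the minimal diagram would need a positively curved interior vertex of degree less than $6$ after reduction, which contradicts $C(6)$ via Proposition~\ref{reduceddiagram}. As in Lemma~\ref{lem:intersections}, non-reducedness is handled by a minimal-counterexample argument.

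\textbf{Main obstacle.} I expect the main obstacle to be this last step. It requires setting up the boundary paths so that the resulting disc diagram is reduced and nondegenerate, and then controlling the piece count. As in the proof of Lemma~\ref{lem:intersections}, I would first pick the choices minimizing total diagram area. I would then argue that any reduction produces a smaller counterexample.
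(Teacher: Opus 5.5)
\paragraph{The first two steps.} Your flagness step (Helly property, Lemma~\ref{lem:hellyc6}) and your simple-connectedness step are correct and are the standard ones. You do not need the detour through open neighbourhoods. The cones $\mathrm{Cone}(\Gamma_i)$ are subcomplexes covering $X_c$, and their nonempty finite intersections are the trees $\bigcap_i f(\Gamma_i)$. So the nerve theorem for covers of a CW complex by subcomplexes gives $W(X)\simeq X_c$ directly, with no local finiteness assumption.

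\paragraph{The gap: the link step.} This step is the real content of the theorem, and you leave it as the ``main obstacle'' with a plan that does not work as written.
\begin{itemize}
\item Your fallback is false. $C(6)$ puts no restriction on the degrees of interior vertices of reduced diagrams; that is a $T(q)$ condition. The hexagonal tiling is $C(6)$ and all its vertices have degree $3$, so no contradiction can come from an interior vertex of degree less than $6$.
\item The case $\sigma=\emptyset$ is not required, since systolicity only asks for $6$-large vertex links. Your recipe for it also fails, because the points $v_j$ need not lie in a common cell.
\item Joining $v_{j-1}$ to $v_j$ by an arbitrary path in $\Gamma_j$ produces a loop with no reason to be essential anywhere. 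So the sentence ``the central cell would have boundary made of at most $5$ pieces'' is asserted, not derived.
\end{itemize}

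\paragraph{The missing idea.} The whole argument takes place inside one graphical cell, and no new disc diagram is needed beyond those already used for Lemma~\ref{lem:intersectionsc6}.
\begin{itemize}
\item Since $W(X)$ is flag, the link of any simplex is a full subcomplex of a vertex link. So take $\sigma=\{\Gamma_0\}$ and a full cycle $\Gamma_1,\dots,\Gamma_m$ in its link, with $m\in\{4,5\}$.
\item Put $T_j=\Gamma_0\cap\Gamma_j$; these are subtrees of the finite graph $\Gamma_0$. By the definition of the nerve, $T_j\cap T_{j+1}=\Gamma_0\cap\Gamma_j\cap\Gamma_{j+1}$ is nonempty, and it is a tree by Lemma~\ref{lem:intersectionsc6}(3). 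For non-consecutive $j,l$ we have $T_j\cap T_l=\emptyset$.
\item Pick $v_j\in T_j\cap T_{j+1}$, and let $p_j$ be the geodesic in $T_j$ from $v_{j-1}$ to $v_j$.
\item Inductively, $A=T_1\cup\dots\cup T_{m-1}$ is a tree. It meets the tree $T_m$ in the two sets $T_{m-1}\cap T_m$ and $T_m\cap T_1$, which are disjoint because $m\ge 4$.
\item Hence the loop $p_1\cdots p_m$ is essential in $A\cup T_m\subset\Gamma_0$. It runs in $A$ from one gluing tree to the other and returns through $T_m$.
\item Its cyclic reduction is therefore a nontrivial immersed cycle in $\Gamma_0$. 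It is a concatenation of at most $m\le 5$ subpaths of the $p_j$.
\item Each such subpath lies in $f(\Gamma_0)\cap f(\Gamma_j)$ for two distinct cells of a simplified presentation, so it is a piece. This contradicts $C(6)$.
\end{itemize}
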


\section{Distributed Euler characteristic and curvature}\label{sec:cv}

In this section, we recall the definition of the Euler characteristic, define the distributed Euler characteristic and the curvature of a vertex, and prove several properties of curvature for square disc diagrams.

Let $X$ be a finite $2$-complex.
The \textbf{Euler characteristic} $\chi(X)$ is the sum of number of $0$- and $2$-cells of $X$, reduced by the number of $1$-cells of $X$.
Note that every disc diagram has Euler characteristic equal to $1$ and every spherical diagram has Euler characteristic equal to $2$.

The \textbf{distributed Euler characteristic} of a vertex $v\in X$ is obtained by taking $1$ (corresponding to Euler characteristic of $v$), then adding $\frac{1}{n}$ for each $n$-gon containing $v$, and subtracting $\frac{1}{2}$ for each edge incident to $v$.
The terminology comes from distributing the contribution $-1$ of each $1$-cell equally between its two endpoints, and the contribution $1$ from each $2$-cell equally among its boundary $0$-cells.
Therefore, summing up the distributed Euler characteristic over all vertices of $X$ yields the Euler characteristic of $X$. 

\subsection{Square disc diagrams}
Let $D$ be a disc diagram that is a square complex. Throughout this subsection, such diagrams will be called \textbf{square disc diagrams}.

The \textbf{square curvature} of a vertex $v\in D$ is defined as
$$
\kappa^{\square}_D(v)=4 - 2\delta_D(v) + \rho^{\square}_D(v),
$$
where $\delta_D(v)$ denotes the valence of $v$ and $\rho^{\square}_D(v)$ denotes the number of squares incident to $v$, cf. Fig.\ref{fig:class}. Note that, for a square disc diagram, the curvature of a vertex is exactly four times its distributed Euler characteristic.

\begin{figure}[h!]
\begin{center}
\includegraphics[scale=1.5]{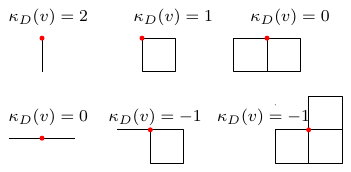}
\end{center}
\caption{All possible neighborhoods of a boundary vertex $v$ (red) of a square disc diagram $D$ such that $-1\leq\kappa^{\square}_D(v)\leq 2$.}\label{fig:class}
\end{figure}

We now state a version of the combinatorial Gauss-Bonnet theorem for CAT(0) square disc diagrams first introduced by Hoda in \cite[Proposition 1.8]{HodaQuadric}.

\begin{pr}[Gauss-Bonnet Theorem for CAT(0) Square Disc Diagrams]\label{gb}
Let $D$ be a CAT(0) square disc diagram. Then: 
$$
\sum\limits_{v \in D}\kappa^{\square}_D(v)=4 
$$
moreover:
$$
\sum\limits_{v \in \partial D}\kappa^{\square}_D(v)\geq 4
$$
\end{pr}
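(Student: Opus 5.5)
The plan is to get the equality from the distributed Euler characteristic and the inequality from a sign condition at interior vertices.

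\emph{The equality.} As noted right after the definition of $\kappa^{\square}_D$, for a square disc diagram the square curvature of a vertex is four times its distributed Euler characteristic. Indeed, each edge end at $v$ contributes $-\tfrac12$ and each square corner at $v$ contributes $\tfrac14$, so
$$4\Bigl(1-\tfrac12\delta_D(v)+\tfrac14\rho^{\square}_D(v)\Bigr)=\kappa^{\square}_D(v).$$
Here $\delta_D(v)$ counts edge ends (a loop counts twice) and $\rho^{\square}_D(v)$ counts square corners at $v$, with multiplicity. Summing over all vertices redistributes exactly the contributions of all $0$-, $1$- and $2$-cells, so
$$\sum_{v\in D}\kappa^{\square}_D(v)=4\chi(D)=4,$$
because a disc diagram is contractible and has $\chi(D)=1$. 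The only thing to check is that the bookkeeping is consistent with these multiplicities, so that each edge gives total $-1$ and each square total $+1$.

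\emph{The inequality.} We split the sum into boundary vertices and interior vertices:
$$\sum_{v\in\partial D}\kappa^{\square}_D(v)=4-\sum_{v\in D\setminus\partial D}\kappa^{\square}_D(v).$$
It therefore suffices to show that every interior vertex has non-positive curvature. Let $v$ be an interior vertex. A small neighbourhood of $v$ in $D$ is a disc, so the link $D_v$ is a single cycle. Its vertices are the $\delta_D(v)$ edge ends at $v$, and its edges are the $\rho^{\square}_D(v)$ corners of squares at $v$. A cycle has equally many vertices and edges, so $\delta_D(v)=\rho^{\square}_D(v)$ and hence
$$\kappa^{\square}_D(v)=4-\rho^{\square}_D(v).$$
The CAT(0) hypothesis on a square complex is Gromov's link condition: no link contains a cycle of length less than $4$. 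Applied to the cycle $D_v$, this gives $\rho^{\square}_D(v)\ge 4$, so $\kappa^{\square}_D(v)\le 0$.

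Substituting into the displayed identity yields $\sum_{v\in\partial D}\kappa^{\square}_D(v)\ge 4$.

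The main obstacle is the interior-vertex step, specifically the degenerate cases. These are a square meeting $v$ in several corners, or a loop edge at $v$. They must be handled by counting corners and edge ends with multiplicity, which the link-based description does automatically. The rest is bookkeeping.
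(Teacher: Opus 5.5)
Your argument is correct and is the standard one: the paper does not reprove this statement but cites Hoda for it, and it records exactly the facts you use for the equality, namely that $\kappa^{\square}_D(v)$ is four times the distributed Euler characteristic and that $\chi(D)=1$. The inequality then follows as you show: the link of an interior vertex is a single cycle, so $\kappa^{\square}_D(v)=4-\rho^{\square}_D(v)\le 0$ by the CAT(0) link condition.
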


The following proposition states a well known property (see. e.g. \cite[Proposition 6.2]{dudaall3}) of curvature along geodesics.

\begin{pr}\label{geo}
Let $D$ be a square disc diagram and let $\gamma\subset\partial D$ be a geodesic in $D$. Then no internal vertex of $\gamma$ has curvature greater than $1$. Moreover, if $u,v$ are internal vertices of $\gamma$ with $\kappa_D(u)=\kappa_D(v)=1$, then there is a vertex $w\in \gamma$ between $v$ and $u$ with $\kappa_D(w)\leq -1$.
\end{pr}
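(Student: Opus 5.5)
The statement is local: it concerns only the neighbourhood of a geodesic segment $\gamma\subset\partial D$ in the square disc diagram $D$. The plan is therefore to classify the possible neighbourhoods of an internal vertex of $\gamma$ and use the geodesicity of $\gamma$ in $D$ to rule out the positively curved ones.

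For the first claim, let $v$ be an internal vertex of $\gamma$. Since $v\in\partial D$, the squares incident to $v$ form fans separated by boundary edges. Consider the maximal fan of squares at $v$ that lies between the two edges of $\gamma$ at $v$, on one side of $\gamma$.
\begin{enumerate}
\item If $v$ has valence $2$ with no square, then $\kappa^{\square}_D(v)=4-4+0=0$.
\item If $v$ has valence $2$ with exactly one square $F$, the two edges of $\gamma$ at $v$ are consecutive sides of $F$. Then the two neighbours of $v$ on $\gamma$ are joined by the other two sides of $F$, a path of length $2$. This is not yet a contradiction, and $\kappa^{\square}_D(v)=4-4+1=1$.
\item If $v$ has valence $\delta\ge 3$, a short count gives $\kappa^{\square}_D(v)=4-2\delta+\rho^{\square}_D(v)\le 1$. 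Indeed, on a disc boundary vertex we have $\rho^{\square}_D(v)\le\delta-1$, so $\kappa^{\square}_D(v)\le 3-\delta\le 0$.
\end{enumerate}
So the only potentially problematic case is valence $1$, i.e.\ a spur. Its curvature is $4-2=2$. However, a spur cannot occur at an internal vertex of $\gamma$, because the path would then backtrack, contradicting geodesicity. Hence every internal vertex of $\gamma$ has curvature at most $1$, with equality exactly in case (2): valence $2$ and a single square corner.

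For the second claim, let $u$ and $v$ be internal vertices of $\gamma$ with curvature $1$. Each is a corner of a single square, at which $\gamma$ turns. The idea is to use the discrete turning argument that underlies Proposition~\ref{gb}: if every vertex of $\gamma$ strictly between $u$ and $v$ has curvature $\ge 0$, then the subpath from $u$ to $v$ is a ``convex'' boundary arc that turns twice in the same direction. Curvature $0$ means either valence $2$ with no square, or valence $3$ with two squares, where the path goes straight.

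In that situation I would build a ladder of squares along the arc, using the fact that the intermediate vertices of curvature $0$ with two squares glue consecutive squares together. This produces a strip of squares between the corner square at $u$ and the corner square at $v$. The opposite side of the strip gives a path between the neighbours of $u$ and $v$ on $\gamma$ that is shorter by $2$ than the corresponding subpath of $\gamma$, contradicting that $\gamma$ is geodesic. Thus some intermediate vertex $w$ has $\kappa^{\square}_D(w)<0$, and since curvature takes integer values, $\kappa^{\square}_D(w)\le -1$.

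The main obstacle is the strip construction. One has to handle intermediate vertices of valence $2$ with no square: these occur along a bare path segment, but then the squares at $u$ and $v$ do not connect. I would treat this by choosing $u$ and $v$ to be consecutive curvature-$1$ vertices and arguing by induction on the distance between them. In the no-square case, I would alternatively invoke Proposition~\ref{gb} applied to the subdiagram bounded by $\gamma$ and a geodesic in $D$, to force negative curvature somewhere on $\gamma$ between $u$ and $v$.
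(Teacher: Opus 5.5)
Your argument for the first claim is correct. At a boundary vertex the link is a disjoint union of paths, so $\rho^{\square}_D(v)\le\delta_D(v)-1$ and $\kappa^{\square}_D(v)\le 3-\delta_D(v)$. A spur is excluded by geodesicity, and curvature $1$ forces valence $2$ with a single square cornered on the two edges of $\gamma$. (The paper gives no proof of its own here, only a citation of \cite[Proposition 6.2]{dudaall3}; the local argument you attempt is the natural one.)

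For the second claim the ladder is the right idea, but the proof has a gap exactly at the case you flag: intermediate vertices of valence $2$ with no square. Neither proposed remedy closes it. Proposition~\ref{gb} is stated only for CAT(0) square disc diagrams, while here $D$ is arbitrary. Even for CAT(0) diagrams it is a global count over $\partial D$ and cannot place a vertex of curvature $\le -1$ strictly between $u$ and $v$. Induction on the distance has nothing to reduce to once $u$ and $v$ are consecutive curvature-$1$ vertices.

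The missing observation is that the bare case cannot occur when the arc has nonnegative curvature. Write $\gamma=(\dots,x_0,x_1=u,\dots,x_n=v,x_{n+1},\dots)$ and suppose every $x_i$ with $1<i<n$ has curvature $\ge 0$. Passing to consecutive curvature-$1$ vertices, you may assume all of them have curvature exactly $0$.
\begin{enumerate}
\item The edges $x_1x_2$ and $x_{n-1}x_n$ lie in the corner squares at $u$ and $v$.
\item Suppose some edge $x_jx_{j+1}$ with $1\le j\le n-1$ lay in no square. Then the least such $j$ satisfies $2\le j\le n-2$.
\item At $x_j$, the end of $x_{j-1}x_j$ lies in a nontrivial link component, so $\delta_D(x_j)\ge 3$. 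The end of $x_jx_{j+1}$ is an isolated link vertex.
\item Hence the link has at least two components, $\rho^{\square}_D(x_j)\le\delta_D(x_j)-2$, and $\kappa^{\square}_D(x_j)\le 2-\delta_D(x_j)\le -1$, a contradiction.
\end{enumerate}
So every edge of $\gamma$ between $u$ and $v$ lies in a square, and no intermediate vertex is of the bare valence-$2$ type. Every intermediate vertex is therefore of the valence-$3$, two-square type, whose link is a path with the two $\gamma$-edges as endpoints. Since a boundary edge lies in at most one square, consecutive squares share the interior edge at each intermediate vertex, and the last square turns the corner at $v$.

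The far side of the ladder is then a path of length $n-1$ from $x_0$ to $x_{n+1}$, against length $n+1$ along $\gamma$. With this step inserted your proof is complete, and it needs no CAT(0) hypothesis.
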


\begin{df}
    Let $D$ be a square disc diagram and $\gamma:=\{x'_0,x_0,\ldots x_n,x'_n\}$ a path in its boundary.
    Suppose that $\kappa_D(x_i)$ is equal to $0$ for $1\leq i\leq n-1$ and equal to $1$ for $i\in\{1,n\}$.
    Then there exists a path 
    $$\gamma':=\{x'_0\ldots x'_n\},$$ 
    and a subcomplex $\mathcal{P}$ of $D$ bounded by $\gamma\cup\gamma'$ consisting of squares $$F_i:=\{x_i,x_{i+1},x'_{i+1},x'_i\},$$ cf. Fig.\ref{fig:class}.
    The subcomplex $\mathcal{P}$ is called a \textbf{plateau along} $\gamma$.
\end{df}

\begin{figure}[h!]
\begin{center}
\includegraphics[scale=1]{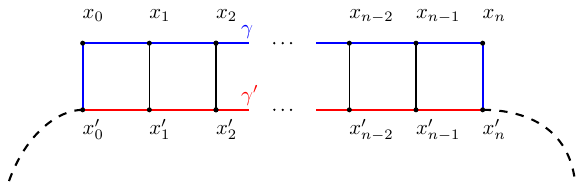}
\end{center}
\caption{Example of a plateau}\label{fig:plateau}
\end{figure}

From now on, by the \textbf{curvature along} $\gamma$ we mean the sum of the curvature of all internal vertices of $\gamma$.

\begin{pr}\label{plateau}
    Let $D$ be a square disc diagram and $\gamma$ an injective path in its boundary. Assume that every internal vertex of $\gamma$ has curvature at most $1$, and that the sum of the curvature along $\gamma$ is at least $2$. Then there exists a subpath $\beta\subset\gamma$ together with a plateau along $\beta$.
\end{pr}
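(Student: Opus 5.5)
The plan is to locate a maximal run of internal vertices of $\gamma$ with curvature $0$ that sits between two vertices of curvature exactly $1$, and then to build the plateau by induction along that run, one square at a time.

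\emph{Finding the subpath.} Every internal vertex of $\gamma$ has curvature at most $1$, and the total curvature along $\gamma$ is at least $2$. So at least two internal vertices have curvature exactly $1$. Among all such pairs, choose consecutive ones $x_1,x_n$ (in the order along $\gamma$) so that every internal vertex strictly between them has curvature $\le 0$.

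We then want those in-between vertices to have curvature exactly $0$. The positive-curvature vertices carry total at least $2$. Since each carries at most $1$, a simple counting argument applies: order the vertices of curvature $1$ along $\gamma$ and look at the intervals between consecutive ones. If every such interval contained a vertex of negative curvature, then the number of positive vertices minus the number of gaps containing a negative vertex would be at most $1$. That contradicts total curvature $\ge 2$ once the negative contributions are counted; more precisely, if each gap contains curvature $\le -1$, the sum is at most $1$. Hence some gap between two curvature-$1$ vertices $x_1,x_n$ contains only vertices of curvature $0$. Set $\beta=\{x'_0,x_1,\dots,x_n,x'_n\}$, where $x'_0$ and $x'_n$ are the neighbours of $x_1$ and $x_n$ on $\gamma$ just outside the run. (Re-indexing to match the definition as needed.)

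\emph{Local structure.} Next we read off local structure from the curvature values using Fig.~\ref{fig:class}. At a boundary vertex $v$ of a square disc diagram, $\kappa(v)=4-2\delta(v)+\rho(v)$.
\begin{itemize}
\item If $v$ is not a cut vertex, the squares at $v$ form a fan, so $\rho=\delta-1$ and $\kappa=3-\delta$.
\item Curvature $1$ therefore means $\delta=2$ and $\rho=1$, i.e.\ $v$ is a corner of a single square.
\item Curvature $0$ means $\delta=3$ and $\rho=2$: two squares meet along an interior edge at $v$.
\end{itemize}
Configurations with cut vertices or spurs would lower $\rho$ relative to $\delta$. One checks from the list in Fig.~\ref{fig:class} that these cannot give curvature $0$ or $1$ in the interior of an injective boundary path with both $\gamma$-edges present, except for the listed cases. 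This case check is the main obstacle, and I would handle it by enumerating $\delta\le 3$.

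\emph{Building the plateau.} The construction proceeds square by square.
\begin{itemize}
\item \emph{First square.} At $x_1$ the unique square is $F_0=\{x_1,x_2,x'_2,x'_1\}$, suitably labelled, using the two $\gamma$-edges at $x_1$.
\item \emph{Inductive step.} At each curvature-$0$ vertex $x_i$, the third edge $x_ix'_i$ is shared by the square from the left and a new square $F_i$ containing $x_ix_{i+1}$. The fourth vertex of $F_i$ gives $x'_{i+1}$.
\item \emph{Closing up.} At $x_n$ the single square must coincide with the last $F_{n-1}$, which closes the path $\gamma'=\{x'_0,\dots,x'_n\}$.
\end{itemize}
The squares are distinct because $\gamma$ is injective and each $F_i$ is the unique square containing the edge $x_ix_{i+1}$ on the boundary side. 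Their union is the required subcomplex $\mathcal P$ bounded by $\beta\cup\gamma'$.
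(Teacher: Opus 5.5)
Your proposal is correct and takes the paper's route: the paper picks a minimal subpath of $\gamma$ with curvature exactly $2$, which by the integer gap-counting you give has curvature pattern $1,0,\dots,0,1$, and then says the plateau exists ``by definition''; you simply spell out the square-by-square construction behind that definition. One small correction to your local analysis is that a degree-two cut vertex with no incident squares also has curvature $0$, but it cannot occur at $x_i$ in your inductive step because the edge $x_{i-1}x_i$ already lies in $F_{i-1}$, so $\rho^{\square}_D(x_i)\geq 1$, which forces $x_i$ to be a non-cut vertex with $\delta=3$ and $\rho^{\square}_D(x_i)=2$.
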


\begin{proof}
    Since each internal vertex of $\gamma$ has curvature at most $1$ and the sum is at least $2$, there exists a subpath $\beta$ with curvature along $\beta$ equal to $2$ and any proper subpath of $\beta$ of curvature strictly smaller than $2$.
    By definition, there is a plateau along $\beta$.
\end{proof}

The following Lemma relates quadric complexes and CAT(0) square complexes.

\begin{lm}\cite[Lemma 1.6]{HodaQuadric}\label{quad}
Let $Y$ be a quadric complex and $\alpha$ be some cycle in $Y$. If $D$ is a minimal area disc diagram for $\alpha$ in $Y$, then $D$ is a CAT(0) square complex.
\end{lm}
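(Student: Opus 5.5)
The plan is to verify Gromov's link condition for $D$. Since $D$ is a disc diagram, it is simply connected. A simply connected square complex is CAT(0) as soon as the link of every interior vertex contains no cycle of length less than $4$. Throughout, I regard such a short cycle in the link of an interior vertex $v$ as $k$ squares of $D$ glued cyclically around $v$, with $k\in\{1,2,3\}$. For each $k$ I will show that this configuration lets us replace a subdiagram of $D$ by one of strictly smaller area with the same boundary, contradicting minimality of $D$. I will use the square-complex version of Lemma~\ref{l:vank}(2)--(3), and the characterization in Proposition~\ref{mgchar}: $Y$ is $4$-flag, $Y^1$ is simplicial (and bipartite), and every $6$-cycle has a diagonal.

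\textbf{Cases $k=1$ and non-simplicial links.} A cycle of length $1$ in the link means a square whose attaching map is not locally injective at $v$; this is excluded by condition (A) for the images. A cycle of length $2$ that comes from two squares sharing two consecutive edges $a\!-\!b\!-\!c$ at the corner $b=v$ is handled as follows. Let $d$ and $d'$ be the fourth vertices of the two squares.
\begin{itemize}
\item If the images of $d$ and $d'$ in $Y$ coincide, the two squares map to the same square of $Y$. This is because $Y$ is $4$-flag and $Y^1$ is simplicial, so a square of $Y$ is determined by its boundary $4$-cycle. Hence the pair is a cancellable pair, and we can fold it away, lowering the area.
\item If the images of $d$ and $d'$ differ, then $a,d,c,d'$ is an embedded $4$-cycle in $Y$, using bipartiteness and simpliciality. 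By condition (C) it bounds a single square, so replacing the two squares by one lowers the area.
\end{itemize}

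\textbf{Case $k=2$.} Here $v$ is interior of degree $2$. The two squares around $v$ share both of their edges at $v$. This falls into the situation just treated: their union is a disc whose boundary is a $4$-cycle, which is either degenerate (the squares cancel) or filled by one square via (C).

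\textbf{Case $k=3$.} Here $v$ is interior of degree $3$. The three squares around $v$ form a hexagonal subdisc $H\subset D$ whose boundary path has length $6$.
\begin{itemize}
\item If the image of $\partial H$ in $Y$ is an embedded $6$-cycle, condition (D) gives two squares $F,F'$ with $\partial(F\cup F')$ equal to this cycle. Replacing $H$ by $F\cup F'$ lowers the area from $3$ to $2$.
\item If the image of $\partial H$ is not embedded, then by bipartiteness two vertices at distance $2$ along it coincide. This forces two of the three squares to share two consecutive edges in the image. We then reduce to the previous cases, either by folding or by replacing via (C).
\end{itemize}

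\textbf{Expected main obstacle.} The real difficulty is bookkeeping in the degenerate subcases. After identifications in $Y$, the subdiagram being replaced (for instance the union of two squares sharing a corner, when the shared vertices lie on $\partial D$) need not be a nonsingular disc with embedded boundary. One must then check that the cut-and-paste still produces a genuine disc diagram with the same boundary $\alpha$.

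My first approach is to always pass to the minimal sub-configuration in the image. When the boundary of the configuration is not embedded, I would use simpliciality of $Y^1$ to find a backtrack; that backtrack yields a fold of $D$, which strictly decreases area. So every case ends with a contradiction to minimality. It follows that all interior vertices have link girth at least $4$ and the links are simplicial, so $D$ is a CAT(0) square complex.
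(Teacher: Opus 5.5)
The paper does not prove this lemma. It imports it verbatim from Hoda, so on the paper's side there is only the citation.

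Your proof is correct. It is the standard direct argument behind that citation. Links of boundary vertices in a (possibly singular) disc diagram are forests, so the link condition reduces to every interior vertex having degree at least $4$. Interior degrees $1$, $2$, $3$ are then excluded by (A), by cancellation or (C), and by (D) respectively, each time lowering area.

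What your version buys is transparency about exactly which local axioms are used. What it costs is that your handling of degenerate images leans on the global facts of Proposition~\ref{mgchar}. This covers opposite corners of a square having distinct images, which also rules out coincidences among the three neighbours of $v$ in the $k=3$ hexagon, and uniqueness of a square with given boundary. Your proof is therefore only as self-contained as that proposition's proof.

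One step should be stated more carefully. A fold does not by itself lower area, and folding an interior V of $D$ in place does not even give a planar diagram. Instead, excise the open star of $v$ (area $2$ or $3$), so that the backtrack lies on the boundary of the hole, and fold there. Then glue in a diagram of area at most $1$ for the reduced boundary path (one square, or a tree), discarding any spherical pieces. This is the same cut-and-paste that underlies Lemma~\ref{l:vank}(2).
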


\subsection{Triangle disc diagrams.}

We define curvature analogously for triangle complexes. Let $D$ be a disc diagram that is a triangle complex. Throughout this subsection, such diagrams will be called \textbf{triangle disc diagrams}.

The \textbf{triangle curvature} of a vertex $v\in D$ is defined as follows
$$
\kappa^{\triangle}_D(v)=6 - 3\delta_D(v) + 2\rho^{\triangle}_D(v),
$$
where $\rho^{\triangle}_D(v)$ denotes the number of triangles incident to $v$, cf. Fig.\ref{fig:class2}. Note that, for a triangle disc diagram, the curvature of a vertex is exactly six times its distributed Euler characteristic.

\begin{figure}[h!]
\begin{center}
\includegraphics[scale=1.5]{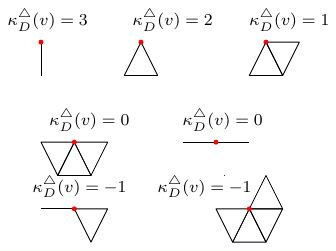}
\end{center}
\caption{All possible neighborhoods of a boundary vertex $v$ (red) of a triangle disc diagram $D$ such that $-1\leq\kappa^{\triangle}_D(v)\leq 3$.}\label{fig:class2}
\end{figure}

We now state properties of triangle curvature, analogous to properties from the previous subsection. 
We begin with a version of the combinatorial Gauss-Bonnet theorem for CAT(0) triangle disc diagrams.

\begin{pr}[Gauss-Bonnet Theorem for CAT(0) Triangle Disc Diagrams]\label{gbsys}
Let $D$ be a CAT(0) triangle disc diagram. Then: 
$$
\sum\limits_{v \in D}\kappa^{\triangle}_D(v)=6, 
$$
moreover:
$$
\sum\limits_{v \in \partial D}\kappa^{\triangle}_D(v)\geq 6. 
$$
\end{pr}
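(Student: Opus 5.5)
Both identities come from the distributed Euler characteristic, in the same way as for the square case in Proposition~\ref{gb}. The plan is to write $\chi(D)=V-E+F$, with $V,E,F$ the numbers of vertices, edges and triangles of $D$. First we count incidences. Each edge has two endpoints, so $\sum_v \delta_D(v)=2E$. Each triangle has exactly three corners; since the attaching maps are immersions and $D$ is a CAT(0) triangle complex, its boundary has three distinct vertices. Hence $\sum_v \rho^{\triangle}_D(v)=3F$. Summing the definition of curvature then gives
\[
\sum_{v\in D}\kappa^{\triangle}_D(v)=6V-3\cdot 2E+2\cdot 3F=6\chi(D).
\]
Since $D$ is a disc diagram, $\chi(D)=1$, which proves the first identity. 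This is just the remark that $\kappa^{\triangle}_D(v)$ equals six times the distributed Euler characteristic of $v$.

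For the inequality, we split the sum into interior and boundary vertices:
\[
\sum_{v\in\partial D}\kappa^{\triangle}_D(v)=6-\sum_{v\in D\setminus\partial D}\kappa^{\triangle}_D(v),
\]
so it suffices to show that every interior vertex has nonpositive curvature. Let $v$ be an interior vertex. Its link is a cycle of length $\delta_D(v)$, and the triangles at $v$ correspond bijectively to the edges of this link, so $\rho^{\triangle}_D(v)=\delta_D(v)$. This gives $\kappa^{\triangle}_D(v)=6-\delta_D(v)$. The CAT(0) condition for a triangle complex, with triangles taken equilateral of angle $\pi/3$, is the link condition that every cycle in the link has angular length at least $2\pi$, that is, $\delta_D(v)\ge 6$. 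Therefore $\kappa^{\triangle}_D(v)\le 0$, and the inequality follows.

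The only delicate point is the bookkeeping at singular diagrams. If $D$ has cut vertices or spurs, a vertex may appear several times along $\partial D$, and edges not contained in any triangle contribute to $\delta_D$ but not to $\rho^{\triangle}_D$. The first identity is unaffected, since the incidence count is purely combinatorial. For the inequality, we only need that every vertex not on $\partial D$ has a cycle as its link, and this holds for any disc diagram. I expect no genuine obstacle here. The argument is the triangle analogue of Hoda's proof of Proposition~\ref{gb}, with the weights $4,2,1$ replaced by $6,3,2$.
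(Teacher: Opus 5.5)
Your argument is correct and follows the route the paper intends. The paper gives no separate proof: it presents the statement as the triangle analogue of Hoda's square version (Proposition~\ref{gb}), relying on the remark that $\kappa^{\triangle}_D(v)$ is six times the distributed Euler characteristic, so the total is $6\chi(D)=6$, and on the link condition, which gives $\delta_D(v)\ge 6$ and hence $\kappa^{\triangle}_D(v)\le 0$ at interior vertices. You also make explicit two points the paper leaves implicit: that each triangle has three distinct vertices, and that boundary vertices are summed as a set rather than with multiplicity along $\partial D$.
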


\begin{pr}\label{geosys}
Let $D$ be a triangle disc diagram and let $\gamma\subset\partial D$ be a geodesic in $D$. Then no internal vertex of $\gamma$ has curvature greater than $1$. Moreover, if $u,v$ are internal vertices of $\gamma$ with $\kappa_D(u)=\kappa_D(v)=1$, then there is a vertex $w\in \gamma$ between $v$ and $u$ with $\kappa_D(w)\leq -1$.
\end{pr}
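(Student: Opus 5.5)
The plan is to mirror the proof of Proposition~\ref{geo} in the square case, working entirely with the local structure at a boundary vertex of a triangle disc diagram. Let $v$ be an internal vertex of the geodesic $\gamma\subset\partial D$. Since $D$ is a disc diagram and $v$ is a boundary vertex, a small neighbourhood of $v$ is a union of $k\ge 1$ ``sectors'' (blocks of consecutive triangles joined along interior edges), possibly separated by spurs or cut-vertex configurations. In the simplest non-singular situation, $v$ has $\rho$ triangles arranged in a fan and $\delta=\rho+1$ edges, so
\[
\kappa^{\triangle}_D(v)=6-3(\rho+1)+2\rho=3-\rho .
\]
Singular situations (cut vertex, spurs, several fans) only add edges, each decreasing the curvature by $3$ while adding at most $2$ per new triangle. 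So in general $\kappa^{\triangle}_D(v)\le 3-\rho$ for a single fan, and it is strictly smaller otherwise. I would first record this classification, matching Fig.~\ref{fig:class2}.

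For the first assertion, I would rule out the values $3$ and $2$ via geodesicity. If $\kappa(v)=3$, then $v$ lies in no triangle and the two $\gamma$-edges at $v$ are the only edges, or $v$ is the tip of a spur. In the spur case, $\gamma$ would backtrack, contradicting geodesicity. I expect that, under the paper's convention, the no-triangle interior case gives curvature $6-3\cdot2=0$, not $3$, so this fits the classification. If $\kappa(v)=2$, then $v$ lies in exactly one triangle, whose two sides at $v$ are the consecutive edges $u v$ and $v w$ of $\gamma$. The third side $uw$ then yields a path of length $1<2$ between $u$ and $w$, contradicting that $\gamma$ is geodesic in $D$. Hence $\kappa(v)\le 1$, with equality exactly when $v$ has a fan of two triangles $u v x$ and $x v w$.

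For the second assertion, suppose $u,v$ are internal vertices of $\gamma$ with $\kappa=1$, and every vertex strictly between them has $\kappa\ge 0$, hence $\kappa=0$. A vertex with $\kappa=0$ is a fan of three triangles. I would then build a triangle analogue of the plateau: starting from the fan at $u$, the triangles along the segment of $\gamma$ from $u$ to $v$ form a strip whose inner boundary $\gamma'$ runs parallel to $\gamma$. A curvature-$1$ endpoint contributes two triangles, and each curvature-$0$ vertex contributes three, with consecutive fans sharing a triangle. Counting lengths, the inner path $\gamma'$ between the outer neighbours of $u$ and $v$ on $\gamma$ is one edge shorter than the corresponding subpath of $\gamma$, which contradicts geodesicity. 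Therefore some vertex between them has $\kappa\le -1$; since curvature is an integer, a negative value is at most $-1$.

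The main obstacle is making the strip argument rigorous. I need to check that consecutive fans glue as claimed, with the last edge of one fan equal to the first edge of the next. This follows because they share the $\gamma$-edge and the unique triangle containing it. I also need the length count to be done carefully: the concatenated inner path has length $m$ where the $\gamma$-segment has length $m+1$. I would handle this by induction along $\gamma$, as in the construction of a plateau in the square case.
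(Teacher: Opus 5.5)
Your argument is correct and is the standard one the paper relies on; the paper gives no separate proof and treats the statement as the triangle analogue of the cited square case. The boundary-vertex count $\kappa=6-3c-\rho$ (with $c$ the number of components of the link) rules out $\kappa\ge 2$ at internal vertices of $\gamma$. Your strip is exactly the paper's triangle plateau, whose inner path $x'_0,\dots,x'_{n+1}$ is one edge shorter than the segment $x'_0,x_0,\dots,x_n,x'_{n+1}$ of $\gamma$.

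Two small repairs are needed. First, pass to a pair $u,v$ with no curvature-$1$ vertex strictly between them, since as written ``$\kappa\ge 0$, hence $\kappa=0$'' does not follow. Second, note that $\kappa=0$ also occurs at a degree-$2$ vertex lying in no triangle; this cannot appear here because each $\gamma$-edge leaving a fan vertex lies in a triangle.
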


\begin{df}
    Let $D$ be a triangle disc diagram and $\gamma:=\{x'_0,x_0,\ldots x_n,x'_{n+1}\}$ a path in its boundary.
    Suppose that $\kappa_D(x_i)$ is equal to $0$ for $1\leq i\leq n-1$ and equal to $1$ for $i\in\{1,n\}$.
    Then there exists a path $$\gamma':=\{x'_0\ldots x'_{n+1}\},$$ and a subcomplex $\mathcal{P}$ of $D$ bounded by $\gamma\cup\gamma'$ consisting of triangles $$\hat{F}_i:=\{x'_{i},x_{i},x'_{i+1}\} \text{ for } 0\leq i \leq n \text{ and } \check{F}_i:=\{x_{i},x_{i+1},x'_{i+1}\} \text{ for } 0\leq i\leq n-1$$ cf. Fig.\ref{fig:class2}.
    The subcomplex $\mathcal{P}$ is called a \textbf{plateau along} $\gamma$.
\end{df}

\begin{figure}[h!]
\begin{center}
\includegraphics[scale=1]{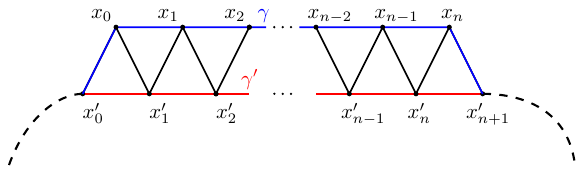}
\end{center}
\caption{Example of the plateau in a triangle disc diagram}\label{fig:plateausys}
\end{figure}

\begin{pr}\label{plateautriangle}
    Let $D$ be a triangle disc diagram and $\gamma$ an injective path in its boundary. Assume that the curvature of each internal vertex of $\gamma$ is at most $1$ and the sum of the curvature along $\gamma$ is at least $2$. Then there exists a subpath $\beta\subset\gamma$ and a plateau along $\beta$.
\end{pr}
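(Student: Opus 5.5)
The plan is to mimic the proof of Proposition~\ref{plateau} almost verbatim, since the statement is purely combinatorial and only uses the curvature values along $\gamma$ and the local structure of $D$ near boundary vertices of curvature $0$ and $1$. Write the internal vertices of $\gamma$ as $y_1,\dots,y_m$ in order. By hypothesis each $\kappa^{\triangle}_D(y_j)\le 1$ and $\sum_j \kappa^{\triangle}_D(y_j)\ge 2$.

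\emph{Choosing the subpath.} Take a subpath $\beta\subset\gamma$ that is minimal with respect to inclusion among subpaths whose curvature along $\beta$ is at least $2$. Such a $\beta$ exists because $\gamma$ itself qualifies and paths are finite. By minimality, removing an endpoint of $\beta$ drops the curvature along it below $2$, so its first and last internal vertices $x_1,x_n$ have curvature exactly $1$ (curvatures are integers bounded by $1$). Moreover, no intermediate internal vertex can have curvature $1$, since otherwise a shorter subpath ending there would already reach $2$ together with $x_1$. Also no intermediate vertex can be negative, as then the total would fall below $2$: the only positive contributions are $x_1$ and $x_n$, each equal to $1$, so the total would be at most $2$ minus a positive amount. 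Hence the internal vertices between them have curvature exactly $0$, and the curvature along $\beta$ equals $2$.

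\emph{Building the plateau.} Use Figure~\ref{fig:class2}, the classification of boundary neighborhoods. A boundary vertex with $\kappa^{\triangle}=1$ has valence $3$ and is incident to two triangles, forming a fan at the vertex. A boundary vertex with $\kappa^{\triangle}=0$ has valence $4$ and is incident to three triangles. One then constructs $x'_0,\dots,x'_{n+1}$ inductively. At $x_1$, the two triangles are $\{x'_0,x_0,x_1\}$-type and $\{x_1,x'_1,\dots\}$, producing $\hat F_0$ and $\check F_0$ and the new vertex $x'_1$. At each curvature-$0$ vertex $x_i$, the three triangles fan around $x_i$ and continue the strip, producing $\hat F_i,\check F_i$. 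At $x_n$, the two triangles close the strip with $\hat F_n$ and the vertex $x'_{n+1}$. This step is exactly the content of the definition of a plateau in the triangle case, which asserts the existence of $\gamma'$ under these curvature conditions, so one may simply invoke it.

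The main point requiring care is that the boundary labelling $x'_0,x_0,\dots,x_n,x'_{n+1}$ matches the definition. Here $x'_0$ and $x'_{n+1}$ are the neighbours of $x_1$ and $x_n$ on $\gamma$ outside $\beta$, and they are not part of $\beta$ itself. So $\beta$ is taken to include these outer vertices. Injectivity of $\gamma$ guarantees that these vertices are distinct boundary vertices, so the fans do not wrap around.
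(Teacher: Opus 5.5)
Your route is the same as the paper's: take a subpath $\beta\subset\gamma$ that is minimal among those with curvature along it at least $2$, check that its internal curvatures read $1,0,\dots,0,1$, and invoke the definition of a plateau. There is one flawed justification. You argue that no intermediate internal vertex $x_i$ ($1<i<n$) has curvature $1$ because the subpath ending at $x_i$ ``would already reach $2$ together with $x_1$''. This ignores vertices of negative curvature that may lie between $x_1$ and $x_i$. Your next step (no negative vertices) uses the conclusion of this one, so as written the two steps are circular.

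The repair is short. After obtaining $\kappa(x_1)=\kappa(x_n)=1$, dropping an endpoint of $\beta$ gives a proper subpath of curvature $S-1\le 1$, hence $S=2$. Suppose some intermediate $x_i$ had curvature $1$. Then the proper subpaths with internal vertices $x_1,\dots,x_i$ and $x_i,\dots,x_n$ each have curvature at most $1$ by minimality. But their curvatures sum to $S+\kappa(x_i)=3$, a contradiction. So every intermediate curvature is $\le 0$, and these curvatures sum to $S-2=0$; hence they all vanish.

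Two smaller remarks concern your explicit construction, which you end up replacing by a citation of the definition, just as the paper does.

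\begin{enumerate}
\item A boundary vertex of curvature $0$ need not have valence $4$ and three triangles. For a boundary vertex whose link has $c$ path components and which lies in $\rho$ triangles, one has $\kappa^{\triangle}_D=6-3c-\rho$. Thus $\kappa^{\triangle}_D=0$ also occurs for $(c,\rho)=(2,0)$, namely a valence-$2$ vertex on a bridge lying in no triangle. This case is excluded here, and only for an inductive reason. The first curvature-$1$ vertex is a single fan of two triangles, so all its edges lie in triangles, and so the next vertex lies in a triangle. Inductively, each curvature-$0$ vertex is a fan of three triangles.
\item Your description of the triangles at the first curvature-$1$ vertex is garbled. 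In the definition's labelling the curvature-$1$ vertices should be $x_0$ and $x_n$, with triangles $\{x'_0,x_0,x'_1\}$ and $\{x_0,x_1,x'_1\}$ at $x_0$. The paper's own definition has the same index slip.
\end{enumerate}
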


\begin{proof}
    Since each internal vertex of $\gamma$ has curvature at most $1$ and the sum is at least $2$, there exists a subpath $\beta$ with curvature along $\beta$ equal to $2$ and any proper subpath of $\beta$ of curvature strictly less than $2$.
    By definition, there is a plateau along $\beta$.
\end{proof}

The following lemma relates systolic complexes and CAT(0) triangle complexes.
This follows from \cite[Claim 1, Theorem 8.1]{CHEPOI2000125}. We note that the \textbf{bridged complexes} considered there are precisely systolic complexes.

\begin{lm}\label{sys}
Let $X$ be a systolic complex and $\alpha$ be some cycle in $X$. If $D$ is a minimal area disc diagram for $\alpha$ in $X$, then $D$ is a CAT(0) triangle complex.
\end{lm}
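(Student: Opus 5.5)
Lemma~\ref{sys} is an existing result: the paper says it follows from \cite[Claim~1, Theorem~8.1]{CHEPOI2000125}. So the plan is to derive it from that source rather than prove it from scratch. The dictionary is that systolic complexes are exactly the bridged simplicial complexes of \cite{CHEPOI2000125}. A systolic complex is simply connected with $6$-large links, and this matches Chepoi's characterization of bridged complexes as simply connected flag complexes whose links have no induced $4$- or $5$-cycles.

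First I check the hypotheses. $X$ is systolic, hence flag and simply connected, hence bridged. Any cycle $\alpha$ in $X^1$ is null-homotopic, so a disc diagram exists by the usual van Kampen argument. Take a minimal area diagram $D\to X$ for $\alpha$. By simplicial approximation we may assume $D$ is a triangulated disc mapped simplicially.

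Second, I argue that minimality forces nonpositive curvature at interior vertices; this is the content of Claim~1. Suppose an interior vertex $v$ of $D$ has degree $k\le 5$. Its link in $D$ is a $k$-cycle, which maps to a closed path in the link of the image of $v$ in $X$. If $k\le 3$, or if the map is not locally injective, there are folds. These can be cancelled, and cancelling them strictly reduces area.

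If $k\in\{4,5\}$ and the path is embedded, it is a short cycle in a $6$-large link. It therefore has a diagonal. Flagness then lets us retriangulate the star of $v$ with $k-2$ triangles instead of $k$, again reducing area. This contradicts minimality.

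Hence every interior vertex has degree at least $6$, i.e.\ $\kappa^{\triangle}_D(v)\le 0$ at interior vertices. For a triangulated disc this is exactly Gromov's link condition for the piecewise Euclidean equilateral metric, so $D$ is CAT(0). Theorem~8.1 of \cite{CHEPOI2000125} packages this conclusion.

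The main obstacle is degenerate behaviour: $D$ may be singular, and the folding and cut-vertex cases must be handled. Here I would apply the argument to each cut component separately, since a disc whose nonsingular pieces are CAT(0) is itself CAT(0). I would also check that the retriangulation step preserves the boundary, which holds because $v$ is interior.
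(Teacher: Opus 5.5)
Your proposal is correct and takes the paper's route: the paper only cites \cite[Claim~1, Theorem~8.1]{CHEPOI2000125}, using the identification of systolic and bridged complexes. Your sketch (cancel folds, and otherwise use flagness and $6$-largeness of the link of the image vertex to retriangulate the star of an interior vertex of degree $\le 5$ with fewer triangles, so that all interior degrees are $\geq 6$) is the content of Chepoi's Claim~1. One small correction: an interior vertex of degree $3$ whose link maps injectively is not a fold, but the same retriangulation step handles it, since a $3$-cycle in a flag link spans a simplex and the star can be replaced by a single triangle.
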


\section{Finite order automorphism fixes a cell}\label{sec:fix}

For many simply connected non-positively curved complexes, an automorphism fixes a point if and only if it (or more precisely, the cyclic subgroup generated by it) has bounded orbits. However, this fails for non-locally finite $C(p)$--$T(q)$ small cancellation complexes with $q<5$. 

To see this, consider the following complex $X$ (see Figure \ref{fig:infclique}). The $1$-skeleton $X^1$ is the infinite clique spanned by $\mathbb{Z}$. We attach a $2$-cell along every cycle of the form $v_i,v_{i+1},\ldots v_{i+k}, v_i$ for any $i$ and $k\geq 2$. 

\begin{figure}[h]
    \centering
    \includegraphics[width=0.7\linewidth]{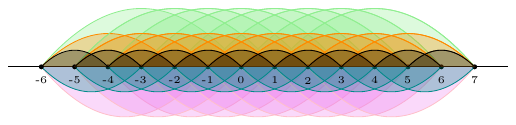}
    \caption{A non-locally finite $C(p)$--$T(4)$ complex}
    \label{fig:infclique}
\end{figure}
Since none of the $2$-cells is fully covered by pieces, $X$ is $C(p)$ for any $p$. Moreover, $X$ satisfies $T(q)$ for $q\leq 4$.
Note that there is an obvious action of $\mathbb{Z}$ on $X$. This action has a bounded orbit, as $X$ is a bounded set of diameter $1$. However, the action of $\mathbb{Z}$ on $X$ has no fixed point.

Since we do not assume local finiteness for graphical \cftfs or \css complexes, an action of a group whose cyclic subgroups all have bounded orbits need not have a fixed point. 
Nevertheless, we show that every action of a finite cyclic group on a simply connected graphical \cftfs or \css complex admits a fixed point. 

We begin by proving that simply connected non-thickened \cftfs and \css graphical complexes are contractible. 
Related results were obtained by Osajda and Prytu{\l}a, who proved that  locally finite non-thickened graphical $C(6)$ complexes are contractible \cite{OsajdaPrytula}. 
Moreover, Gruber proved that the presentation complex associated to a graphical presentation satisfying a stronger form of the $C(6)$ condition is aspherical \cite{Gruber-graphical}. 
Our result does not require local finiteness, and in the $C(6)$ case it applies under the usual graphical $C(6)$ condition.

Let $X$ be a graphical complex with associated presentation $\langle f:\Gamma\rightarrow \Theta \rangle$. We remind the reader that we assume that all graphical complexes are simplified, this assumption is necessary for Lemma \ref{lm:fixpt}, as otherwise a finite cyclic group acting on $X_c$ would not necessarily stabilize a cone cell of $X_c$, but instead stabilize the union of multiple cone cells, all spanned by the same $1$-skeleton.

Recall that a simply connected 2-complex $Y$ is contractible if and only if its second homotopy group $\pi_2(Y)$ is trivial. 
Indeed, by the Hurewicz theorem, $\pi_2(Y)\cong H_2(Y)$, and since the complex is 2-dimensional, the vanishing of $\pi_2$ implies the vanishing of all higher homotopy groups; the conclusion then follows from Whitehead's theorem.

Therefore, if $X$ is a simply connected graphical complex and its non-thickened realization $X_c$ is not contractible, then $\pi_2(X_c)$ is nontrivial. 
Hence, there exists a spherical diagram $S\rightarrow X_c$ representing a nontrivial element of $\pi_2(X_c)$. 
We call such a spherical diagram \textbf{minimal} if it has the least number of 2-cells among all homotopically nontrivial spherical diagrams in $X_c$. 

\begin{pr}\label{spheresthicknonthick}
Let $X_c$ be a non-thickened graphical complex with $\pi_2(X_c)$ nontrivial.  
Then, for every minimal homotopically nontrivial spherical diagram $S\rightarrow X_c$, there is an associated spherical diagram $S_t\rightarrow X_t$ in the thickened realization.    
\end{pr}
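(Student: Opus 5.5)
The plan is to build $S_t$ from $S$ by collapsing, for each cone vertex, its star in $S$ into a single polygonal $2$-cell that maps into the corresponding thick cell of $X_t$.

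\textbf{Local structure.} Every $2$-cell of $X_c$ is a triangle with one vertex at a cone tip $\widehat{\Gamma}_i$ and opposite edge in $\Theta$. Hence the preimages in $S$ of cone tips form a set $V_{\mathrm{cone}}$ of vertices such that every triangle of $S$ contains exactly one of them. No two cone vertices are adjacent, since every edge at a cone tip goes to a vertex of $\Theta$. So $S$ is partitioned into the closed stars $\mathrm{St}(w)$, $w\in V_{\mathrm{cone}}$, and these stars meet only along edges and vertices lying over $\Theta$. The link of $w$ in $S$ is a circle, and the link map sends it to $\Gamma_i$, the link of $\widehat{\Gamma}_i$ in $\mathrm{Cone}(\Gamma_i)$. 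Thus the boundary of $\mathrm{St}(w)$ is a closed path $C_w\to\Gamma_i\to\Theta$.

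\textbf{Using minimality.} We need $C_w$ to be an immersed cycle in $\Gamma_i$, and $\mathrm{St}(w)$ to be a genuine disc, so that a $2$-cell of $X_t$ is attached along $C_w$. We argue by contradiction.

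First, suppose the link path has a backtrack, i.e.\ two consecutive triangles at $w$ map to the same triangle of $X_c$ with opposite orientations. Fold that pair of triangles. This yields a spherical diagram with two fewer $2$-cells that is homotopic to $S$ (folding is a homotopy), contradicting minimality. Likewise, if $w$ has a repeated vertex on $\partial\mathrm{St}(w)$, the star is pinched. Then cutting along the corresponding loop through $w$ splits $S$ into two spheres, at least one of which is nontrivial and has fewer cells, again a contradiction. If $C_w$ is a multiple cover of a cycle, we need the same kind of surgery, or we accept it, since $X_t$ has $2$-cells along every immersed cycle, and a covering path is still an immersed closed path in $\Gamma_i$. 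So $C_w$ is an immersed closed path $C\to\Gamma_i$. If it were a proper power, I would replace it by the primitive cycle, noting that the definition attaches cells along all immersed cycles $C\to\Gamma$, so it is harmless either way.

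\textbf{Gluing.} Define $S_t$ as the quotient of $S$ obtained by erasing $w$ and the interior edges of each $\mathrm{St}(w)$, so that each star becomes one polygonal $2$-cell $R_w$ with $\partial R_w=C_w$. The result is still a $2$-sphere with a finite combinatorial structure, because the stars are discs whose interiors are disjoint. Map $R_w$ to the $2$-cell of $\mathrm{Th}(\Gamma_i)\subset X_t$ attached along $C_w\to\Gamma_i\to\Theta$. The $1$-skeleton of $S_t$ maps to $\Theta$ exactly as before, so these maps agree on overlaps and give a combinatorial map $S_t\to X_t$.

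\textbf{Main obstacle.} The main obstacle is the second step: showing that minimality excludes degenerate stars, namely backtracking links and pinched or non-disc stars. Only then are $C_w$ honest immersed cycles and $S_t$ a genuine sphere. I expect to handle both cases by the standard diamond-move and surgery arguments, where the point to check carefully is that each reduction preserves homotopic nontriviality of at least one resulting sphere.
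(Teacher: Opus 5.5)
Your construction is exactly the paper's. You replace the closed star of each vertex $w$ over a cone tip $\widehat{\Gamma}_i$ by the $2$-cell of $X_t$ attached along the link cycle $C\to\Gamma_i\to\Theta$. You then use minimality, by folding a backtracking pair of triangles at $w$, to make $C\to\Gamma_i$ immersed. That part is fine; if the fold pinches $S$, it produces two spheres with fewer cells, at least one of them nontrivial, which still contradicts minimality. Multiple covers also need no treatment. $X_t$ has a $2$-cell along every immersed cycle, coverings included (cf.\ Remark~\ref{r:2-complex}), so keep $C_w$ as it is. Replacing it by the primitive cycle would change the boundary length and break the gluing.

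What you should remove is the pinched-star step, which you present as the main obstacle.

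\emph{It is not needed.} Every triangle of $S$ has exactly one vertex over a cone tip, and edges at $w$ are not loops. So the open star of $w$ is the open cone on the link circle, i.e.\ an open disc. Coarsening the cell structure of $S$ as you describe therefore gives a cell structure on $S^2$ however $\partial\,\mathrm{St}(w)$ self-intersects. A combinatorial complex only requires the attaching path $C_w$ to be immersed in $S_t^1$, not embedded. That follows from the link path being immersed, because $f$ is an immersion.

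\emph{The claim is also false in the generality of the statement}, so the surgery cannot be made to work. Suppose the two edges from $w$ to the repeated vertex lie over cone edges at distinct vertices $a\neq b$ of $\Gamma_i$ with $f(a)=f(b)$. Then the loop you cut along is not of the form $p\bar p$, and the two discs cannot be zipped closed into spheres.

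Here is a concrete instance. Let $\Theta$ be a triangle $pqr$, let $\Gamma_j$ be a triangle mapped isomorphically, and let $\Gamma_i$ be a hexagon mapped by the double cover. Take $\mathrm{Cone}(\Gamma_i)$, identify two opposite boundary vertices (both over $p$), and fill the two loops of the resulting figure-eight with copies of $\mathrm{Cone}(\Gamma_j)$. This is a homotopically nontrivial sphere with $12$ triangles.

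It is minimal. $X_c\simeq S^2$ by collapsing the disc $\Theta\cup\mathrm{Cone}(\Gamma_j)$, so a nontrivial sphere has some degree $d\neq 0$ on $\mathrm{Cone}(\Gamma_i)$. It must then cover each triangle of $\mathrm{Cone}(\Gamma_i)$ at least $|d|$ times and each triangle of $\mathrm{Cone}(\Gamma_j)$ at least $2|d|$ times.

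Yet the star of the vertex over $\widehat{\Gamma}_i$ is pinched, and its $S_t$ (three faces) is still a valid spherical diagram. So drop this step; your argument then coincides with the paper's.
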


\begin{proof}
    Every $2$-cell of $S$ is a triangle. 
    Moreover, for every triangle $T$ in $S$, there is some $Cone(\Gamma_i)$ such that one vertex of $T$ maps to the cone tip $\widehat{\Gamma}_i$.
    Let $v$ be a vertex of $S$ mapping to a cone tip $\widehat{\Gamma}_i$. Denote by $St_S(v)$ the closed star of $v$ in $S$, that is, the union of all closed cells of $S$ containing $v$. 
    Since $S$ is a minimal spherical diagram, the link $S_v$ is a single cycle. 
    Moreover, $S_v$ corresponds to an immersed cycle in $\Gamma_i$.
    Indeed, if the corresponding cycle in $\Gamma_i$ is not immersed, then one could reduce the spherical diagram near $v$ and obtain a homotopically nontrivial spherical diagram with fewer 2-cells, contradicting the minimality of $S$.  
    Let $C\rightarrow \Gamma_i$ be the immersed cycle corresponding to the link $S_v$, and let $F_v$ be the 2-cell of the thickened graphical complex $X_t$ with the attaching map $C\rightarrow \Gamma_i\rightarrow \Theta$. 
    We now replace $St_S(v)$ by the cell $F_v$, for every vertex $v$ of $S$ mapping to a cone tip. After performing this replacement at all such vertices, we obtain a spherical diagram $S_t\rightarrow X_t$. 
\end{proof}

\begin{lm}
    Let $X$ be a simply connected \cftfs or \css graphical small cancellation complex, then its non-thickened realization $X_c$ is contractible.
\end{lm}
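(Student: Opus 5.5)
We argue by contradiction, following the reduction already prepared before the statement. Since $X_c$ is simply connected and $2$-dimensional, it suffices to show $\pi_2(X_c)=0$. Suppose not, and choose a minimal homotopically nontrivial spherical diagram $S\to X_c$. By Proposition~\ref{spheresthicknonthick} we obtain a spherical diagram $S_t\to X_t$ by replacing each cone-tip star by the $2$-cell $F_v$ of $X_t$ along the immersed cycle given by the link of $v$. The goal is then to show that $S_t$ is reduced. This contradicts the $C(4)$--$T(4)$ or $C(6)$ condition, because by Proposition~\ref{reduceddiagram} a reduced spherical diagram would be a $C(4)$--$T(4)$ (resp.\ $C(6)$) spherical diagram. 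Such diagrams do not exist: by Gauss--Bonnet the total distributed Euler characteristic is $2$, whereas $C(p)$--$T(q)$ with $1/p+1/q\le 1/2$ forces it to be $\le 0$ (the standard curvature count, as in the classical theory).

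First I will check that $S_t$ is a genuine spherical diagram, i.e.\ a combinatorial $2$-sphere. Minimality of $S$ gives that each link $S_v$ at a cone tip is a single cycle, and $S$ is a sphere, so the stars $St_S(v)$ are discs. Replacing each star by a single $2$-cell along its boundary cycle therefore preserves the sphere. I also need that no vertex of $\Theta$-type is a degenerate point. If $S$ had a vertex of $\Theta$-type with a degenerate neighbourhood, a smaller nontrivial sphere could be cut out, again contradicting minimality.

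Next comes the reducedness of $S_t$, which I expect to be the main obstacle. Suppose $S_t$ is reducible along a piece $P$ shared by two cells $F_v,F_w$. These cells come from factorizations through the same graphical cell $\Gamma_i$, essentially the same in $X_t$; since the presentation is simplified, $v$ and $w$ map to the same cone tip $\widehat{\Gamma}_i$. The plan is to use Proposition~\ref{reducible_spherical}: either there is an injective cycle $C$ in $S_t$ cutting it into two discs, one of which is reducible, or $S_t$ consists of two cells mapped to the same thick cell.

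In the first case, the two cone stars at $v$ and $w$ in $S$, glued along $P$, map into $Cone(\Gamma_i)$ compatibly along $P$, since the identifications in $\Gamma_i$ agree. We then perform a diamond-move-type surgery: cut $S$ along the path through $v$, $P$ and $w$, and reglue so that the two cone points are merged. This yields a spherical diagram with fewer triangles (the triangles over $P$ cancel), or splits $S$ into two spheres whose classes sum to $[S]$. At least one of these is homotopically nontrivial and smaller, contradicting minimality.

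In the degenerate case, $S$ is two cones over the same cycle in $Cone(\Gamma_i)$. The map from $S$ then factors through $Cone(\Gamma_i)$, which is contractible, so $S$ is null-homotopic, again a contradiction.

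The delicate point is verifying that the cut-and-reglue in $X_c$ preserves the homotopy class up to splitting. I would handle it exactly as in the classical proof that minimal spherical diagrams are reduced (Lyndon–Schupp), using that both stars map into the contractible cone $Cone(\Gamma_i)$, so the homotopy can be performed inside that cone.
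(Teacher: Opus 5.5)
Your proposal is correct and follows the paper's proof essentially step for step. The steps match:
\begin{itemize}
\item take a minimal nontrivial spherical diagram $S\to X_c$ and thicken it to $S_t\to X_t$;
\item rule out reducedness of $S_t$ by the distributed Euler characteristic count;
\item via Proposition~\ref{reducible_spherical}, dispose of reducibility either by merging the two cone tips and cancelling the triangles over the piece (contradicting minimality) or by noting that the degenerate two-cell case factors through a single contractible cone.
\end{itemize}
Your remark that the surgery may split $S$ into spheres whose classes sum to $[S]$ makes explicit a point the paper leaves implicit.
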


\begin{proof}
    Suppose, for a contradiction, that $X_c$ is not contractible.
    Since $X_c$ is a simply connected $2$-complex, the discussion preceding Proposition \ref{spheresthicknonthick} implies that $\pi_2(X_c)$ is nontrivial. 
    Hence, there exists a homotopically nontrivial spherical diagram in $X_c$. 
    Let $S\rightarrow X_c$ be a minimal one.
    
    By Proposition \ref{spheresthicknonthick}, there is an associated spherical diagram $S_t\rightarrow X_t$ in the thickened graphical complex.
    We first claim that $S_t\rightarrow X_t$ cannot be reduced. 
    Indeed, if $S_t\rightarrow X_t$ were reduced, then Proposition \ref{reduceddiagram} would imply that $S_t$ satisfies the \cftfs or \css condition. 
    We show that this is impossible for a spherical diagram. 
    
    We may assume that $S_t$ has no vertices of degree $2$: whenever such a vertex occurs, we remove it together with its incident edges and replace them by a single edge. 
    This operation does not decrease the relevant $C(p)$ or $T(q)$ parameters.
    Since $S_t$ is spherical, every vertex of $S_t$ is internal. 
    
    In the \cftfs case, the $T(4)$ condition implies that every vertex $v$ of $S_t$ has degree $n\geq 4$. If $n$ cells contain $v$, then the distributed Euler characteristic at $v$ satisfies 
    $\chi_{S_t}(v)\leq 1-\frac{n}{2}+\frac{n}{4}\leq 0$.
    
    In the \css case, every vertex has degree $n\geq 3$, and the $C(6)$ condition implies that each incident 2-cell contributes at most $\frac{1}{6}$ to the distributed Euler characteristic at $v$. 
    Thus $\chi_{S_t}(v)\leq 1-\frac{n}{2}+\frac{n}{6}\leq 0$.
    
    In both cases, summing over all vertices gives $\chi_{S_t}\leq 0$ contradicting $\chi_{S_t}= 2$. 
    Therefore $S_t\rightarrow X_t$ is not reduced.
    
    Thus $S_t\rightarrow X_t$ is reducible along some piece.
    By Proposition \ref{reducible_spherical}, either there exists a disc subdiagram $D_t\subseteq S_t$ such that $D_t\rightarrow X_t$ is reducible, or $S_t$ consists of two $2$-cells with the same boundary and mapping to the same thick cell of $X_t$.
    
    Consider first the case where such a reducible disc subdiagram $D_t$ exists. 
    Then there is a piece $P\rightarrow D_t$ along which $D_t\rightarrow X_t$ is reducible. Let $F_1$, $F_2$ be the $2$-cells in $D_t$ sharing the piece $P\rightarrow D_t$, then the boundaries of $F_1$ and $F_2$ must map to the same graphical cell.  
    Hence, a reduction can be performed in $S$: the cone pieces corresponding to $F_1$ and $F_2$ have the same cone tip, and after identifying the cone tips and canceling the duplicate triangles based on the piece $P\rightarrow D_t$, one obtains a spherical diagram $S'\rightarrow X_c$ with fewer $2$-cells than $S$ and the same homotopy class as $S\rightarrow X_c$, contradicting the minimality of $S$.
    
    It remains to consider the second case, where $S_t$ consists of two $2$-cells with the same boundary and mapping to the same thick cell of $X_t$. Then the associated diagram $S\rightarrow X_c$ is contained in a single cone of $X_c$. Hence it is homotopically trivial, contradicting the choice of $S$.

    Therefore, no homotopically nontrivial spherical diagram exists in $X_c$, so $\pi_2(X_c)=0$. Since $X_c$ is simply connected, it follows that $X_c$ is contractible.

    \end{proof}
    
    We will use the following fixed-point theorem of Casacuberta-Dicks: every action of a finite solvable group on a 2-dimensional $\mathbb{Z}$-acyclic CW complex has a fixed point \cite[Corollary 1.2]{Casacuberta1992}. Since cyclic groups are solvable, and since a contractible complex is $\mathbb{Z}$-acyclic, we obtain the following consequence. 

    \begin{cor}
    \label{cor:fixpt}
      Let $X$ be a simply connected \cftfs or \css graphical small cancellation complex, and let $X_c$ be the associated non-thickened graphical complex.
      Then every action of finite cyclic group on $X_c$ has a fixed point.
    \end{cor}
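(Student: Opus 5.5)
This is essentially immediate from the preceding lemma combined with the theorem of Casacuberta--Dicks, so the plan is simply to verify the hypotheses of \cite[Corollary 1.2]{Casacuberta1992}. Let $X$ be a simply connected \cftfs or \css graphical complex and let a finite cyclic group $\mathbb{Z}/n\mathbb{Z}$ act on $X_c$. We understand the action to be by automorphisms of the graphical complex, hence cellular. The requirement that the group be finite and solvable is clear, since every cyclic group is abelian.

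For the hypotheses on the space, we note first that $X_c$ is a $2$-dimensional CW complex. By definition it is obtained from the graph $\Theta$ by attaching simplicial cones $Cone(\Gamma_i)$, so all of its cells have dimension at most $2$. Next, the preceding lemma shows that $X_c$ is contractible. A contractible space has the homology of a point, so $\tilde H_*(X_c;\mathbb{Z})=0$, and therefore $X_c$ is $\mathbb{Z}$-acyclic.

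The only point needing care is whether the action is cellular in the sense required by the fixed point theorem. The CW structure we use is the one in which every triangle of every cone is a $2$-cell. An automorphism of $X$ permutes the graphical cells, and it maps the underlying graph $\Theta$ to itself. It therefore induces a map $Cone(\Gamma_i)\to Cone(\Gamma_j)$ that sends cone tips to cone tips and triangles to triangles. This map is a cellular homeomorphism of $X_c$. If the version of the theorem we invoke requires an admissible action, meaning that a cell is fixed pointwise whenever it is stabilized, we pass to the barycentric subdivision. This keeps the complex $2$-dimensional and contractible, and makes the action admissible.

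Applying \cite[Corollary 1.2]{Casacuberta1992} then gives a fixed point of $\mathbb{Z}/n\mathbb{Z}$ in $X_c$. No step here is a genuine obstacle; the substantive input is the contractibility lemma that was already proved.
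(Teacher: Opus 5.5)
Your proposal is correct and is essentially the paper's argument: contractibility of $X_c$ from the preceding lemma gives $\mathbb{Z}$-acyclicity, $X_c$ is $2$-dimensional, cyclic groups are solvable, and Casacuberta--Dicks then yields a fixed point. Your extra remark about passing to a subdivision so that the action becomes admissible is a harmless technical refinement that the paper leaves implicit.
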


    \begin{lm}\label{lm:fixpt}
        Let $X$ be a simply connected \cftfs or \css graphical small cancellation complex. Let $G$ be a finite cyclic group acting on $X_t$, such that the action of $G$ on the $1$-skeleton of $X_t$ is free.
    Then $G$ stabilizes a thick cell of $X_t$.
    \end{lm}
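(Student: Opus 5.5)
The plan is to transfer the action from $X_t$ to the non-thickened realization $X_c$, apply Corollary~\ref{cor:fixpt} to get a fixed point there, and then show that the fixed point must be a cone tip.

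First I check that an automorphism of the graphical complex induces an action on both realizations. The action of $G$ on $X_t$ is by automorphisms of the graphical complex, so it permutes the graphical cells. Here I use that the presentation is simplified: a graphical cell is determined by its image $\Gamma_i\to\Theta$ up to the isomorphisms in the definition, so permuting thick cells is the same as permuting cones. Thus $G$ acts on $X_c$ cellularly, sending cone tips to cone tips and $\Theta$ to $\Theta$. By Corollary~\ref{cor:fixpt}, $G$ fixes some point $x\in X_c$.

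Next I locate $x$ by cases. If $x$ lies in $\Theta$, it is either a vertex or an interior point of an edge. A fixed vertex contradicts freeness on $X^1$. A fixed interior point of an edge $e$ forces every $g\in G$ to stabilize $e$. Since the action is free on vertices, any $g\neq 1$ must swap the endpoints of $e$. Then $g^2$ fixes both endpoints, so $g^2=1$ by freeness, and $g$ acting on the graph $X^1$ inverts an edge. I would rule this out by reading ``free on the $1$-skeleton'' as freeness on the topological realization, which excludes edge inversions. If that reading is not available, I would pass to the barycentric subdivision, where the midpoint of $e$ becomes a fixed vertex; the midpoint of an edge counts as a point of $X^1$, so freeness is again contradicted.

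Otherwise $x$ lies in $Cone(\Gamma_i)\setminus\Gamma_i$ for some $i$. Each cone is the set of points at positive height over a unique graphical cell (on the open part), so $G$ must map $Cone(\Gamma_i)$ to itself. Hence $G$ stabilizes the graphical cell $\Gamma_i$, and therefore also the thick cell $\mathrm{Th}(\Gamma_i)$ in $X_t$.

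The main obstacle is the bookkeeping in the first step: making sure a cellular automorphism of $X_t$ really descends to a well-defined cellular action on $X_c$. This needs two things. Distinct graphical cells must give distinct thick cells, which is simplifiedness together with the embedding statements of Lemma~\ref{lem:intersections}(1) and Lemma~\ref{lem:intersectionsc6}(1). And the 2-cells of a single thick cell must be permuted only among themselves. I would prove the second point by noting that a 2-cell of $X_t$ determines its graphical cell. This uses $C(4)$ or $C(6)$: a 2-cell boundary cannot be a single piece, so it cannot factor through two essentially different graphical cells.
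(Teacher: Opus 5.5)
Your proposal is correct and is essentially the paper's proof: pass to $X_c$, get a fixed point from Corollary~\ref{cor:fixpt}, use freeness on $\Theta$ to place it in an open cone, and conclude that the corresponding graphical cell, hence its thick cell, is stabilized. Your extra bookkeeping is not needed, because the paper takes the action to be by automorphisms of the graphical complex. On edge inversions, only your first reading is valid: freeness on $X^1$ as a topological space, which is what the paper uses. The barycentric-subdivision fallback is circular, and with mere freeness on vertices the statement fails, e.g.\ for the half-turn of the hexagonal tiling about an edge midpoint.
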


    \begin{proof}
        Consider the associated non-thickened graphical complex $X_c$. 
        By Corollary \ref{cor:fixpt}, the action of $G$ on $X_c$ has a fixed point. By the freeness of the action on $X_t$ and therefore on $\Theta$, it fixes a point inside of a cone. Clearly it has to fix a tip of a cone. If tip of a cone is fixed, then thick cell corresponding to a cone is stabilized.
    \end{proof}

\section{Concatenations of geodesics}\label{sec:main}

Let $X$ be a simply connected \cftfs or \css graphical small cancellation complex such that every $1$-cell of a
non-thickened graphical complex $X_c$ is contained in the boundary of at least one cone.
Let $G$ be a group acting on $X$ by automorphisms and assume that this action induces a free action on the $1$-skeleton $X^{1}$ of $X$. 

Let $Y$ be either the quadrization or the Wise complex of $X$ (depending whether $X$ is \cftfs or \cs). 
For any pair of vertices $x,y\in Y$ corresponding to two graphical cells of $X$, by $\mathfrak{G}(x,y)$ we denote the set of all geodesics between $x$ and $y$ and by $\mathfrak{X}_i(x,y)$, be the set of all vertices at the distance $i$ from $x$ in $\mathfrak{G}(x,y)$.
\subsection{Case of $C(4)$--$T(4)$}
\begin{lm}\label{lmklcftf}
Let $f\in G$ be a nontrivial element of finite order $m$.
Let $x\in \mathrm{Fix}_Y(f)$ and $y$ be a vertex distinct from $x$.
There exists $k$, such that $\mathfrak{X}_1(x,y)\cap f^k\mathfrak{X}_1(x,y)=\emptyset$.
\end{lm}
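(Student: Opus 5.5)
The plan is to argue by contradiction, using the Helly property for subtrees of a finite tree together with the freeness of the action on $X^1$.

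\textbf{Setup.} Since $G$ acts freely on $X^1$, $f$ fixes no vertex of $\Theta$. So the fixed vertex $x\in Y$ is a cone tip $\widehat{\Gamma}_x$, and $f$ stabilizes the graphical cell $\Gamma_x$. By Lemma~\ref{lem:intersections}(1), $\Gamma_x$ is embedded in $\Theta$. Write $d=d_Y(x,y)$. Then $\mathfrak{X}_1(x,y)$ is exactly the set of vertices $u$ of $\Gamma_x$ with $d_Y(u,y)=d-1$.

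\textbf{Reduction to invariance.} Suppose, for a contradiction, that $\mathfrak{X}_1(x,y)\cap f^k\mathfrak{X}_1(x,y)\neq\emptyset$ for every $k$. Then the finite family $\{f^k\mathfrak{X}_1(x,y)\}_{0\le k<m}$ is pairwise intersecting, since
\[
f^a\mathfrak{X}_1\cap f^b\mathfrak{X}_1=f^a\bigl(\mathfrak{X}_1\cap f^{b-a}\mathfrak{X}_1\bigr).
\]
Suppose we know that $\mathfrak{X}_1(x,y)$ spans a subtree $T$ of the embedded graph $\Gamma_x$, and that $T$ is contained in a subtree of $\Gamma_x$ on which the translates $f^kT$ live. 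Then the Helly property for subtrees gives a nonempty subtree $\bigcap_k f^kT$. This subtree is $f$-invariant, since $f^m=1$. A finite order automorphism of a finite tree fixes a vertex or inverts an edge. Either outcome gives a fixed point of $f$ on $X^1$, contradicting freeness.

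\textbf{Tree structure of $\mathfrak{X}_1(x,y)$.} Take $u,v\in\mathfrak{X}_1(x,y)$. They are both adjacent to $x$, so $d_Y(u,v)=2$ and $d_Y(u,y)=d_Y(v,y)=d-1$. Since $Y$ is quadric, its $1$-skeleton is $4$-bridged by Proposition~\ref{mgchar}. Therefore the quadrangle condition holds: there is a cone tip $w$ adjacent to both $u$ and $v$ with $d_Y(w,y)=d-2$ (if $d=2$, take $w=y$).

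Every vertex of $\Gamma_x\cap\Gamma_w$ is adjacent to $w$, so it lies at distance $d-1$ from $y$ and hence in $\mathfrak{X}_1(x,y)$. By Lemma~\ref{lem:intersections}(2), $\Gamma_x\cap\Gamma_w$ is a finite tree containing $u$ and $v$. Consequently $\mathfrak{X}_1(x,y)$ is the union of the trees $\Gamma_x\cap\Gamma_w$, over cone tips $w$ at distance $2$ from $x$ and $d-2$ from $y$, and any two of its vertices lie in a common such tree. In particular the subgraph it spans is connected.

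\textbf{Main obstacle: excluding cycles.} It remains to show that this connected subgraph contains no cycle of $\Gamma_x$. This is the step I expect to be hardest. A cycle would be covered by several pieces $\Gamma_x\cap\Gamma_{w_i}$, and $C(4)$ alone only rules out covers by fewer than four pieces. I would first try the strong Helly property (Lemma~\ref{l:c4t4strongH}) together with Lemma~\ref{lem:intersections}(3), aiming to show that any two of the relevant cells $\Gamma_{w_i},\Gamma_{w_j}$ intersect. For this, consider a $6$-cycle in $Y$ through $x$, $u$, $w_i$, a geodesic toward $y$, $w_j$ and $v$, and apply the diagonal condition of Proposition~\ref{mgchar}. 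If this works, the Helly property (Lemma~\ref{lem:flag}) collapses all the $w_i$ onto a common intersection. That yields a single tree in $\Gamma_x$ containing all of $\mathfrak{X}_1(x,y)$, which is what the reduction step needs.

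As a fallback, if pairwise intersection fails, I would use a minimal area disc diagram in $Y$ (CAT(0) by Lemma~\ref{quad}) bounded by two geodesics from $x$ to $y$. I would then apply Gauss--Bonnet (Proposition~\ref{gb}) and Proposition~\ref{geo} near $x$ to force the two first steps of the geodesics to lie in a common cell $\Gamma_w$.
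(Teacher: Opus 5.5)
There is a genuine gap in your reduction step. The Helly property for subtrees needs all the subtrees to lie in one common tree, and $\Gamma_x$ is not a tree. Knowing that $\mathfrak{X}_1(x,y)$ spans a subtree $T\subseteq\Gamma_x$ says nothing about how the translates $f^kT$ sit relative to each other. For example, $T,fT,f^2T$ could be three pairwise overlapping arcs covering a cycle of $\Gamma_x$ that $f$ rotates, and then the total intersection is empty.

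Your ``main obstacle'' paragraph only tries to show that $T$ is a tree, and then says this is what the reduction needs. The second hypothesis you stated, a common subtree of $\Gamma_x$ containing every $f^kT$, is never established. It also cannot come from the tree structure of $T$ alone. If $T$ were the union of two pieces $\Gamma_x\cap\Gamma_w$ and $\Gamma_x\cap\Gamma_{w'}$, the three translates would cover that cycle by six pieces, which $C(4)$ does not forbid. In addition, the $6$-cycle you propose, $x,u,w_i,\dots,y,\dots,w_j,v$, has length $2d$, so the diagonal condition of Proposition~\ref{mgchar} does not apply to it.

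The missing idea is to put all of $\mathfrak{X}_1(x,y)$ inside a single graphical cell and then use the Helly property for cells, which carries the global $C(4)$--$T(4)$ information. Your quadrangle-condition step already shows that every pair $u,v\in\mathfrak{X}_1(x,y)$ lies in a common cell $w\in\mathfrak{X}_2(x,y)$. The paper gets this from a curvature argument on a minimal disc diagram spanned by two geodesics from $x$ to $y$. If you use $4$-bridgedness instead, you must cite or prove that it implies the quadrangle condition.

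Next, induct using the strong Helly property (Lemma~\ref{l:c4t4strongH}):
\begin{itemize}
\item Suppose each $\mathfrak{x}_i\in\mathfrak{X}_2(x,y)$ contains every one of $x_1,\dots,x_{k+1}$ except $x_i$.
\item Then $\mathfrak{x}_1,\mathfrak{x}_2,\mathfrak{x}_3$ pairwise intersect, so one of them contains the intersection of the other two.
\item That cell therefore contains all $k+1$ vertices.
\end{itemize}
Since $\mathfrak{X}_1(x,y)$ is finite, this gives one cell $\mathfrak{x}$ with $\mathfrak{X}_1(x,y)\subseteq\Gamma_x\cap\mathfrak{x}$.

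Now suppose $\mathfrak{X}_1\cap f^k\mathfrak{X}_1\neq\emptyset$ for all $k$. Because $f^k\mathfrak{X}_1\subseteq\Gamma_x\cap f^k\mathfrak{x}$, the cells $\Gamma_x,\mathfrak{x},f\mathfrak{x},\dots,f^{m-1}\mathfrak{x}$ pairwise intersect. Lemma~\ref{lem:flag} then gives a nonempty finite tree $\Gamma_x\cap\bigcap_k f^k\mathfrak{x}$. This tree is $f$-invariant, so $f$ has a fixed point on $X^1$, contradicting freeness. With this route, whether $\mathfrak{X}_1(x,y)$ spans a tree no longer matters. Equivalently, a cycle covered by $T,fT,f^2T$ would now be covered by three pieces $\Gamma_x\cap f^k\mathfrak{x}$, which $C(4)$ forbids.
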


\begin{proof}
We claim that there exists $\mathfrak{x}\in \mathfrak{X}_2(x,y)$ such that $\mathfrak{X}_1(x,y)$ is the set of all vertices from $\mathfrak{x}\cap x$.
Then there exists $f^k$ such that $\mathfrak{X}_1(x,y)\cap f^k\mathfrak{X}_1(x,y)=\emptyset$. Indeed, if there is no such $k$ then $x,\mathfrak{x}, f\mathfrak{x},\ldots ,f^{m-1}\mathfrak{x}$ is a set of pairwise intersecting graphical cells. Thus, by Lemma \ref{lem:flag}, there exists a subtree stabilized by $f$ contained in the $x\cap\mathfrak{x}\cap f\mathfrak{x}\cap \ldots \cap f^{m-1}\mathfrak{x}$. A contradiction with the fact that $f$ acts freely on the $1$-skeleton of $X$.

We will now prove the claim.
Observe that any pair of geodesics between $x$ and $y$ spans a minimal area disc diagram $D$. If $x_i, x_j\in \mathfrak{X}_1(x,y)$ are distinct, then $D$ has nonzero area. But curvature along geodesics is at most $1$, implying that curvature of $x$ is also $1$ (it cannot be $2$, as $x_i$ and $x_j$ are distinct).
Consequently there is exactly one square, containing $x,x_i,x_j$ and some vertex $\mathfrak{x}_{i,j}$.
Assume that $\mathfrak{x}_{i,j}\not\in \mathfrak{X}_2(x,y)$. Then distance from $y$ to $\mathfrak{x}_{i,j}$ is equal to distance from $x$ to $y$, and there are two geodesics one going through $x_i$ and one through $x_j$. A minimal area disc diagram $D'$ bounded by them can be obtained from $D$ by removal of a square $[x,x_i,\mathfrak{x}_{i,j},x_j]$. By the same argument as above, there exists a square containing $\mathfrak{x}_{i,j},x_j,x_j$ and some vertex $\mathfrak{x}'_{i,j}$ in $D'$. Since $D'$ is obtained from $D$ by removal of a single square, this square is also in $D$. But then squares $[x,x_i,\mathfrak{x}_{i,j},x_j]$ and $[\mathfrak{x}_{i,j},x_i,\mathfrak{x}'_{i,j},x_j]$ have common two edges in $D$, and therefore can be replaced by a single square, contradicting the minimality of the area of $D$.

We will show that there is $\mathfrak{x}\in \mathfrak{X}_2(x,y)$, such that $\mathfrak{X}_1(x,y)$ is contained in $x\cap \mathfrak{x}$. 
We show by induction. By the discussion above, for every pair of vertices in $\mathfrak{X}_1(x,y)$, there is a cell in $\mathfrak{X}_2(x,y)$ containing both vertices. 
Assume that every collection of $k$ vertices in $\mathfrak{X}_1(x,y)$ belong to some graphical cell in $\mathfrak{X_2}(x,y)$. 
Take $k+1$ vertices $x_1, ..., x_{k+1}\in \mathfrak{X}_1(x,y)$. 
By the induction hypothesis, there is $\mathfrak{x}_i\in \mathfrak{X}_2(x,y)$ containing all but $x_i$. 
Since $\mathfrak{x}_1, \mathfrak{x}_2, \mathfrak{x}_3$ pairwise intersect, by strong Helly property (Lemma \ref{l:c4t4strongH}), one of these relators contains the intersection of the remaining two. 
Without loss of generality, assume $\mathfrak{x}_1\cap\mathfrak{x}_2 \subset \mathfrak{x}_3$. As $x_3\in \mathfrak{x}_1\cap \mathfrak{x}_2$, we have $x_3\in \mathfrak{x_3}$, therefore, $\mathfrak{x}_3$ contains all $k+1$ vertices. 
Since $\mathfrak{X}_1(x,y)$ is finite, after some step $\mathfrak{X}_1(x,y)$ is contained in $x\cap \mathfrak{x}$.

\end{proof}

\begin{rem}
    Note that the case of graphical small cancellation complexes deviates here significantly from the case of classical small cancellation complexes \cite{dudaall3}.
    While in the classical small cancellation setting, 
    $$\exists k\forall y, \mathfrak{X}_1(x,y)\cap f^k\mathfrak{X}_1(x,y)=\emptyset,$$
    in the graphical setting, there is no such global power and we can only obtain weaker property 
    $$\forall y\exists k, \mathfrak{X}_1(x,y)\cap f^k\mathfrak{X}_1(x,y)=\emptyset.$$ 
    Figure \ref{fig:petersenc4t4} shows an example of a fixed graphical cell where such global power of group element rotating it cannot exist.

    \begin{figure}[h!]
        \centering
        \includegraphics[width=0.4\linewidth]{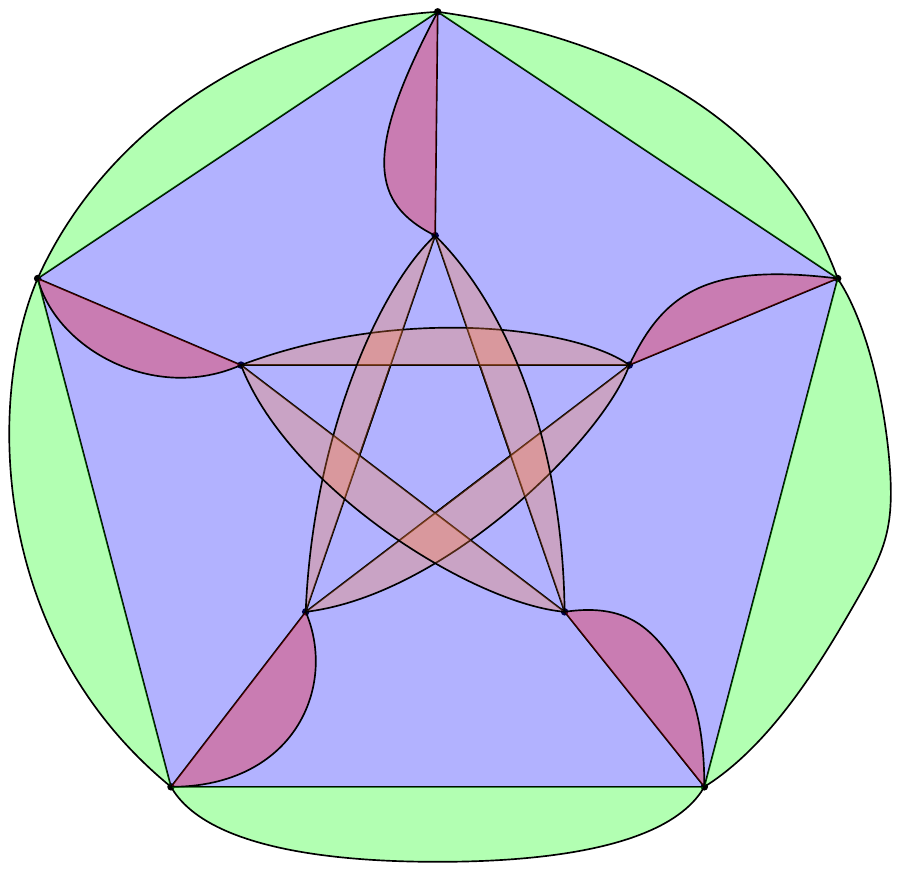}
        \caption{The group element $g$ of order $5$ acts on the central graphical cell (blue), whose $1$-skeleton is the Petersen graph, by rotation. The vertices $y_1,y_2\in Y$ correspond to one of green and one of orange cells, respectively. For every non-trivial $g^k$ if $\mathfrak{X}_1(x,y_1)\cap g^k\mathfrak{X}_1(x,y_1)=\emptyset$, then $\mathfrak{X}_1(x,y_2)\cap g^k\mathfrak{X}_1(x,y_2)\neq\emptyset$}
        \label{fig:petersenc4t4}
    \end{figure}
\end{rem}

\begin{lm}\label{lem:doubling}

Let $x,y,z$ be three distinct vertices in $Y$, if $\mathfrak{X}_1(x,y)\cap \mathfrak{X}_1(x,z)=\emptyset$, then the concatenation of any geodesic from $y$ to $x$ and from $x$ to $z$ is again a geodesic.

\end{lm}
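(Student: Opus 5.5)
The plan is to argue by contradiction using a minimal area disc diagram and the curvature tools of Section~\ref{sec:cv}. This is in the \cftfs case, where $Y$ is the quadrization, so it is quadric.

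\textbf{Setup.} Let $\gamma_1$ be a geodesic from $y$ to $x$ and $\gamma_2$ a geodesic from $x$ to $z$. Assume the concatenation $\gamma_1\gamma_2$ is not geodesic. Choose a geodesic $\gamma_3$ from $z$ to $y$, so that $\gamma_1\gamma_2\gamma_3$ is a closed path. Among all such choices (including the geodesics themselves), take one whose minimal area disc diagram $D$ has the least area. By Lemma~\ref{quad}, $D$ is a CAT(0) square disc diagram, so Proposition~\ref{gb} gives $\sum_{v\in\partial D}\kappa^{\square}_D(v)\ge 4$.

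\textbf{Degenerate cases.} First I would dispose of these. If $D$ has a cut vertex, or the geodesic sides overlap, I would pass to a subdiagram. The key point is that $x$ must lie in a nondegenerate part. The alternative is that $\gamma_1$ and $\gamma_2$ share their first edge at $x$, i.e.\ they start with a common vertex in $\mathfrak{X}_1(x,y)\cap\mathfrak{X}_1(x,z)$, which contradicts the hypothesis. Bipartiteness of $Y$ also forces the lengths to have the right parity, so "not geodesic" means $d(y,z)\le |\gamma_1|+|\gamma_2|-2$.

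\textbf{Curvature count.} By Proposition~\ref{geo}, the internal vertices of each geodesic side have curvature summing to at most $1$. More precisely, each side contributes at most $1$, and positive contributions are separated by $-1$'s. The three corners then need total curvature at least $1$, and in fact more, since a positive sum along a side forces a plateau by Proposition~\ref{plateau}. I would focus on the corner $x$. If $\kappa(x)\ge 1$, then either $x$ has valence $2$ in $D$ with no square ($\kappa=2$, a spur-like situation handled above), or $x$ lies in exactly one square $[x,a,\mathfrak{x},b]$ with $a$ on $\gamma_1$ and $b$ on $\gamma_2$. In the latter case $b$ is adjacent to $\mathfrak{x}$, which lies at distance $|\gamma_1|-2$ from $y$ along a shortened path. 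Hence $d(y,b)\le |\gamma_1|-1$, while $d(x,y)=|\gamma_1|$, so $b\in\mathfrak{X}_1(x,y)$. Also $b\in\mathfrak{X}_1(x,z)$ since it lies on $\gamma_2$. This contradicts the hypothesis.

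\textbf{The main obstacle.} It remains to show that $\kappa(x)\le 0$ is impossible. With curvature at most $1$ per side, the corners $y$ and $z$ would have to carry almost all of the total of $4$. Corner curvature is at most $2$, and a corner with curvature $2$ means a degenerate spur-type corner. I would exclude this using minimality: a corner of curvature $\ge1$ at $y$ or $z$ yields a square that can be peeled off. This shortens $\gamma_3$ or swaps a geodesic by the square, producing a smaller counterexample with the same $x$, $\gamma_1$ and $\gamma_2$ up to endpoints. I also need to track that the hypothesis $\mathfrak{X}_1(x,y)\cap\mathfrak{X}_1(x,z)=\emptyset$ survives these changes. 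This holds because the modifications occur away from $x$, or replace $y$ or $z$ by a vertex on the same geodesic. I would handle this by choosing the counterexample to minimize $|\gamma_1|+|\gamma_2|$ first, then the area.
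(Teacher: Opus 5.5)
There is a genuine gap in your proposal. Your setup (minimal-area CAT(0) diagram, Gauss--Bonnet, the injectivity/overlap reductions) matches the paper, but the heart of your argument, the local analysis at each corner, does not work.

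\textbf{The corner $x$.} Suppose $\kappa(x)=1$, with the single square $[x,a,\mathfrak{x},b]$. Nothing forces $d(y,\mathfrak{x})=|\gamma_1|-2$. Since $\mathfrak{x}$ is adjacent to $a$, bipartiteness only gives $d(y,\mathfrak{x})\in\{|\gamma_1|-2,|\gamma_1|\}$. Likewise $d(z,\mathfrak{x})\in\{|\gamma_2|-2,|\gamma_2|\}$. The larger values are compatible with the local picture. For instance, in the square tiling of the plane take $x=(0,0)$, $a=(0,1)$, $\mathfrak{x}=(1,1)$, $b=(1,0)$, $y=(-k,1)$, $z=(1,-k')$. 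Then $d(y,\mathfrak{x})=|\gamma_1|$ and $d(z,\mathfrak{x})=|\gamma_2|$, so the square produces no vertex of $\mathfrak{X}_1(x,y)\cap\mathfrak{X}_1(x,z)$.

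\textbf{The corners $y$ and $z$.} The same problem affects your peeling step. A corner $y$ of curvature $1$, with square $[y,c,w,d]$ where $c\in\gamma_1$ and $d\in\gamma_3$, can have $d(x,w)=d(x,y)$ and $d(z,w)=d(z,y)$. An example is $y=(0,0)$, $c=(1,0)$, $w=(1,1)$, $d=(0,1)$, $x=(5,-3)$, $z=(-3,5)$. In that case the square cannot be flipped into $\gamma_1$ or $\gamma_3$, and it shortens nothing, so there is nothing to peel.

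A single square at a corner does not by itself give a shortcut. You have to follow the strip of boundary squares until it ends at another vertex of curvature $1$, and only a global curvature count guarantees that such an uninterrupted strip exists.

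\textbf{What the paper does instead.} It does not try to eliminate positive curvature at $y$ and $z$.
\begin{enumerate}
\item Choosing $\alpha$ (your $\gamma_3$) to minimise area forces every interior vertex of $\alpha$ to have curvature $\le 0$, since a curvature-$1$ vertex there would allow a square flip.
\item An endpoint of curvature $2$ is the tip of an overlap segment whose branch vertex $y'$ (resp.\ $z'$) has curvature $\le -1$. Hence all of $\alpha$ contributes at most $2$.
\item Therefore the interior of $\beta$, the part of $\gamma\cup\eta$ between $y'$ and $z'$, carries curvature at least $2$. Its vertices have curvature at most $1$, so Proposition~\ref{plateau} yields a plateau along a subpath of $\beta$.
\item By Proposition~\ref{geo}, the two curvature-$1$ ends of this plateau cannot both be interior to the same geodesic. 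So the plateau straddles $x$, possibly with $x$ as one of its ends.
\item The bottom vertex of the plateau adjacent to $x$ then lies on a geodesic from $x$ to $y$ and on a geodesic from $x$ to $z$. This contradicts $\mathfrak{X}_1(x,y)\cap\mathfrak{X}_1(x,z)=\emptyset$.
\end{enumerate}
You mention plateaus only in passing. This straddling-plateau argument is the missing key step.
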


\begin{proof}

Let $\gamma$ be a geodesic from $x$ to $y$ and let $\eta$ be a geodesic from $x$ to $z$. 

First observe that $\gamma\cap \eta=\{x\}$, that is, the concatenation of the inverse of $\gamma$ and $\eta$, denoted as $\gamma \cup \eta$, is an injective path. 
To see that, let $\gamma:=\{x=x_0, x_1,\ldots, x_m=y\}$, and let $\eta := \{x = x'_0, \ldots, x'_n = z\}$. Suppose there is $x_i = x'_j$ in $\gamma\cap \eta \supsetneq \{x\}$, then $i = d(x,x_i) = d(x,x'_j) = j$, and $\{x = x'_0, \ldots, x'_i = x_i, x_{i+1}, \ldots, x_m = y\}$ will be a geodesic from $x$ to $y$. If $i>0$, then $x'_1\in \mathfrak{X}_1(x,y)\cap \mathfrak{X}_1(x,z)$, contradicting our assumption, therefore, $i$ has to be 0, i.e., $x$ is the only common vertex of $\gamma$ and $\eta$.   

Let $\alpha$ be a geodesic in $Y$ between $y$ and $z$, and let $D_\alpha$ be a disc diagram in $Y$ bounded by $\gamma \cup \eta$ and $\alpha$. Choose $\alpha$ and $D_\alpha$ in a way that minimizes the area of $D_\alpha$.

Suppose that $\gamma\cup \eta$ is not a geodesic, then $D_\alpha$ contains at least one square.
By the minimality of $D_\alpha$, the intersection of $\gamma$ (resp. $\eta$) and $\alpha$ in $D_\alpha$ is a subpath of $\gamma$ (resp. $\eta$), between $y$ (resp. $z$) and some $y'$ (resp. $z'$). 
Therefore, $D_\alpha$ consists of a nonsingular disc diagram and two attached (potentially trivial) paths, where the disc is bounded by two paths from $y'$ to $z'$, one via $\gamma \cup \eta$ and the other via $\alpha$, and attached paths are $\gamma\cap \alpha$ and $\eta \cap \alpha$, attached at $y'$ and $z'$, respectively, cf. Figure \ref{fig:doublingdisc}. 
\begin{figure}[h!]
\begin{center}
\includegraphics[scale=1]{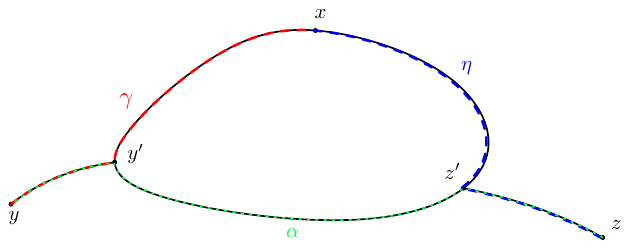}
\end{center}
\caption{Disc diagram $D_\alpha$}\label{fig:doublingdisc}
\end{figure}
Since $D_\alpha$ is a minimal area disc diagram in $Y$, by Lemma \ref{quad}, $D_\alpha$ is CAT(0), and then by Proposition \ref{gb}, the sum of curvature over vertices in its boundary $\partial D_\alpha$ is at least $4$. Let $\beta$ be the subpath of $\gamma\cup\eta$ between $y'$ and $z'$. Then the curvature along $\beta$ and the curvature of every vertex in $\alpha$ should sum up to be at least 4.

By Proposition \ref{geo}, each internal vertex of $\alpha$ has curvature at most $1$. If any vertex has curvature exactly $1$, then it belongs to a boundary square $S$ with two edges along $\alpha$. Thus, $\alpha'$ obtained from $\alpha$ by switching to the internal edges of $S$ is also a geodesic. But $D_{\alpha'}$ has at least one square fewer than $D_{\alpha}$, a contradiction with the choice of $\alpha$.
Therefore, every internal vertex of $\alpha$ has nonpositive curvature in $D_\alpha$, and the ends $y$ and $z$ can have curvature at most 2.
We argue that the sum of curvature over vertices in $\alpha$ is at most 2. 
If it were not, then the curvature of at least one of $y,z$ is 2. 
Assume $\kappa_D(y)=2$, then $y\neq y'$, and $y'$ has curvature at most $-1$, i.e. $\kappa_D(y')\leq -1$. Therefore, the curvature of $z$ also needs to be 2, which similarly implies that $\kappa_D(z')\leq -1$. Since $y'\neq z'$, the sum of curvature over vertices in $\alpha$ cannot exceed 2. In particular, this implies that the curvature along $\beta$ is at least 2.

Since $\beta$ is an injective boundary path in $D_\alpha$, every internal vertex of $\beta$ has curvature at most 1, therefore, by Proposition \ref{plateau}, there exists a plateau $\mathcal{P}$ along $\beta$. 
Let $p_l,p_r$ denote the vertices of positive curvature at the ends of $\mathcal{P}$.
Since all vertices between $p_l$ and $p_r$ have curvature $0$, by Proposition \ref{geo} vertices $p_l,p_r$ cannot both be internal vertices of a single geodesic. Without loss of generality assume that $p_l\in \gamma$ and $p_r\in \eta$. Note that it is possible that one of $p_l$, $p_r$ is $x$.
Let $x'$ be a vertex at the bottom of $\mathcal{P}$ that is adjacent to $x$. By the existence of $\mathcal{P}$, there are geodesics $\gamma'$ from $x$ to $y$ through $x'$ and $\eta'$ from $x$ to $z$ through $x'$, cf. Figure \ref{fig:plateaudoubling}. Consequently, $x'\in \mathfrak{X}_1(x,y)\cap \mathfrak{X}_1(x,z)$. However, this contradicts the condition that the intersection is empty. Hence, $D_\alpha$ contains no square, and $\gamma\cup \eta$ is a geodesic. 

\begin{figure}[h!]
\begin{center}
\includegraphics[scale=1]{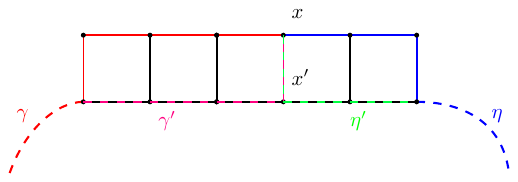}
\end{center}
\caption{Example of the plateau $\mathcal{P}$ and paths $\gamma',\eta'$}\label{fig:plateaudoubling}
\end{figure}

\end{proof}

\subsection{Case of \css with stabilized cell that is not essentially \css }

\begin{lm}\label{lmklcsv}
Let $f\in G$ be a nontrivial element of finite order $m$. 
Let $x\in \mathrm{Fix}_Y(f)$ be a vertex of Wise complex corresponding to a not essentially \css cell of complex $X$. Let $y$ be a vertex distinct from $x$.
There exists $k$, such that $N(\mathfrak{X}_1(x,y))\cap f^k\mathfrak{X}_1(x,y)=\emptyset$, where $N(\mathfrak{X}_1(x,y))$ is the set of all vertices of $Y$ at distance at most $1$ from any vertex of $\mathfrak{X}_1(x,y) $.
\end{lm}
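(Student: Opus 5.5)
Following the strategy of Lemma~\ref{lmklcftf}, I would find a single intersection assembly tree in the fixed cell $x$ that contains all of $N(\mathfrak{X}_1(x,y))\cap x$. Then I would use the Helly property for intersection assembly trees (Lemma~\ref{lem:hellyrelatortrees}) together with freeness of the action on $X^1$ to produce a power $f^k$ that moves this tree off itself.

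In the Wise complex, $Y$ is the nerve of the covering by graphical cells. Hence $\mathfrak{X}_1(x,y)$ is a finite set of cells $x_1,\dots,x_r$, each meeting $x$ and lying on a geodesic to $y$. A vertex of $N(\mathfrak{X}_1(x,y))$ is a cell $z$ meeting some $x_i$, and $f^k$ permutes the cells meeting $x$. So the goal is really a statement about the subgraphs $x\cap x_i$ of $x$.

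\textbf{Step 1 (local structure of first steps of geodesics).} I would show that the cells of $\mathfrak{X}_1(x,y)$, together with the cells of $\mathfrak{X}_2(x,y)$ adjacent to them, form a pairwise intersecting family, at least in their traces on $x$. For two $x_i,x_j\in\mathfrak{X}_1(x,y)$, take geodesics through each and a minimal area disc diagram $D$ in $Y$ bounded by them. By Lemma~\ref{sys}, $D$ is a CAT(0) triangle diagram. By Proposition~\ref{geosys}, the curvature along each geodesic is at most $1$ at internal vertices. Combining this with Proposition~\ref{gbsys} and the plateau argument (Proposition~\ref{plateautriangle}), I would force $x_i$ and $x_j$ either to be adjacent in $Y$ (so $x\cap x_i\cap x_j\neq\emptyset$, by Lemma~\ref{lem:intersectionsc6}(3)) or to be joined through a common neighbour in $\mathfrak{X}_2(x,y)$. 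This is the analogue of the ``exactly one square at $x$'' argument in the $C(4)$--$T(4)$ case, now with triangles. The expected outcome is that the traces $T_i:=x\cap\bigcup\{\text{cells adjacent to }x_i\text{ meeting }x\}$ are intersection assembly trees (Proposition~\ref{relatortrees}, Remark~\ref{piecesinrelatortrees}) in $x$ which pairwise intersect.

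\textbf{Step 2 (Helly and freeness).} Suppose for contradiction that for every $k$ there are $z\in N(\mathfrak{X}_1(x,y))$ and $f^k x_j\in f^k\mathfrak{X}_1(x,y)$ with $z=f^kx_j$. I would translate this into pairwise intersections among the assembly trees $T$ (built from $N(\mathfrak{X}_1(x,y))$) and $f^kT$, for $k=0,\dots,m-1$. Since $x$ is not essentially $C(6)$, Lemma~\ref{lem:hellyrelatortrees} gives a nonempty tree $\bigcap_k f^kT\subseteq x$. This tree is $f$-invariant, and since it is a finite tree, $f$ fixes a vertex or an edge of $X^1$. This contradicts freeness, exactly as in Lemma~\ref{lmklcftf}.

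\textbf{Main obstacle.} The hard part is Step 1 and the matching of the neighbourhood condition with pairwise intersection. An equality $z=f^kx_j$ only shows that the neighbourhood of $\mathfrak{X}_1$ meets its translate. To get $T\cap f^kT\ne\emptyset$ for \emph{all} pairs $f^aT, f^bT$, I would use $f$-equivariance, since $f^aT\cap f^bT=f^a(T\cap f^{b-a}T)$. So it suffices that $T\cap f^kT\neq\emptyset$ for each $k$. That in turn needs $T$ to be connected and a single assembly tree, which is where the systolic curvature analysis is essential. Unlike the strong Helly property in the $C(4)$--$T(4)$ case, hexagonal configurations allow $\mathfrak{X}_1$ to spread out. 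Passing to the neighbourhood $N(\cdot)$ and using the not-essentially-$C(6)$ hypothesis is precisely what compensates for this.
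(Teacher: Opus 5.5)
Your Step~2 has a genuine gap. The tree you feed into Lemma~\ref{lem:hellyrelatortrees} is ``built from $N(\mathfrak{X}_1(x,y))$'', and your $T_i$ are traces of all cells adjacent to $x_i$. Such traces are not intersection assembly trees, because the cells of $N(\mathfrak{X}_1(x,y))$ that meet $x$ need not pairwise intersect. For example, in the hexagonal tiling let $x$ and $x_i$ share the edge $uv$, and let $c,d$ be the third hexagons at $u$ and $v$. Then $c,d\in N(\{x_i\})$ and both meet $x$, yet $c\cap d=\emptyset$. The trace $x\cap(x_i\cup c\cup d)$ is a path of three edges that needs three pieces.

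The cycle arguments in Lemma~\ref{lem:relatortreesintersections} reach the forbidden counts $4$ and $6$ only because of Remark~\ref{piecesinrelatortrees}: every path in an intersection assembly tree is covered by at most two pieces. With three or more pieces per side there is no contradiction. So you have no Helly property for your enlarged trees, and Step~2 does not go through. Enlarging the tree to the neighbourhood is the wrong way to handle $N(\cdot)$. The not essentially $C(6)$ hypothesis does not compensate for that enlargement; it is used only to get Helly for genuine assembly trees.

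What works is to keep $T:=x\cap\bigcup_{x_i\in\mathfrak{X}_1(x,y)}x_i$ and absorb the neighbourhood with the Helly property for cells. Suppose $f^kx_j\in N(\mathfrak{X}_1(x,y))$, so $x_i\cap f^kx_j\neq\emptyset$ for some $i$. Since $f^kx=x$, the cells $x$, $x_i$, $f^kx_j$ pairwise intersect. By Lemma~\ref{lem:intersectionsc6}(3) they have a common point, and that point lies in $T\cap f^kT$.

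For $T$ and $f^kT$ to be intersection assembly trees, $\{x\}\cup\mathfrak{X}_1(x,y)$ must be pairwise intersecting, i.e.\ $\mathfrak{X}_1(x,y)$ must be a clique in $Y$. Your Step~1 leaves open the alternative ``joined through a common neighbour in $\mathfrak{X}_2(x,y)$'', which would break this. In the triangle setting that alternative cannot occur, and no plateau argument is needed. Take the minimal diagram $D$ for two geodesics through $x_i\neq x_j$:
\begin{itemize}
\item each geodesic contributes curvature at most $1$ (Proposition~\ref{geosys});
\item the common tail ending at $y$ contributes at most $2$;
\item so Proposition~\ref{gbsys} forces $\kappa_D(x)\geq 2$;
\item $x_i\neq x_j$ excludes $\kappa_D(x)=3$.
\end{itemize}
Hence exactly one triangle $[x,x_i,x_j]$ sits at $x$, and $x_i$, $x_j$ are adjacent in $Y$. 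With these two ingredients, your equivariance remark and Lemma~\ref{lem:hellyrelatortrees} finish the proof as you intended.
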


\begin{proof}
 
For two vertices $x_i, x_j\in \mathfrak{X}_1(x,y)$, we can choose two geodesics in $Y$ between $x$ and $y$ passing through $x_i$ and $x_j$ respectively, and we can fill these two geodesics with a disc diagram in $Y$. 
Among all disc diagrams obtained in this way, let $D\rightarrow Y$ be one that minimizes the area. 
If $x_i, x_j\in \mathfrak{X}_1(x,y)$ are distinct, then $D$ has nonzero area, the minimality of $D$ then implies that $D$ is a nonsingular disc diagram attached with a (possibly trivial) path $\alpha$. 
Note that the further end of $\alpha$ is precisely the vertex $y$. 
Denote by $y'$ the vertex where $\alpha$ intersects the nonsingular subdiagram.
Note that the sum of curvature of the vertices in $\alpha$ does not exceed $2$ - if $\alpha$ is trivial, then the curvature of $y$ is at most $2$, if $\alpha$ is nontrivial, then the curvature of $y$ is $3$, while the curvature of $y'$ is negative.
Since the curvature along a geodesic is at most $1$, the curvature of $x$ in $D$ must be at least $2$. 
Since $x_i$ and $x_j$ are distinct, the curvature of $x$ cannot be $3$, therefore, it has to be exactly $2$. 
Consequently, $x_i,x_j$ must be joined by an edge.

Hence, the whole set $\mathfrak{X}_1(x,y)$ spans a clique in $Y$.
The intersection of the graphical cell corresponding to $x$ with the union of the graphical cells corresponding to the vertices in $\mathfrak{X}_1(x,y)$ is clearly an intersection assembly tree $T$ in $x$.
If $N(\mathfrak{X}_1(x,y))\cap f^k\mathfrak{X}_1(x,y)\neq\emptyset$, then there exist $x_i,x_j\in \mathfrak{X}_1(x,y)$ such that the graphical cells corresponding to $x_i$ and $f^kx_j$ have nonempty intersection. 
Consequently, by the Helly property for \css complexes, the triple intersection of the graphical cells corresponding to $x,x_i$ and $f^kx_j$ is non-empty, in particular, $T\cap f^k T\neq\emptyset$. 
It is also easy to see that $T\cap f^k T\neq\emptyset$ implies $N(\mathfrak{X}_1(x,y))\cap f^k\mathfrak{X}_1(x,y)\neq\emptyset$.
Therefore, the condition $N(\mathfrak{X}_1(x,y))\cap f^k\mathfrak{X}_1(x,y)=\emptyset$ is equivalent to $T\cap f^k T=\emptyset$. 

Assume that $T\cap f^k T\neq\emptyset$ for all $k\in \{1,...,m-1\}$. 
Since $x$ represents a not essentially $C(6)$ cell, by Lemma \ref{lem:hellyrelatortrees}, $T\cap fT\cap f^2T\cap \ldots \cap f^{m-1}T$ is a nonempty tree stabilized by $f$, therefore, there must be a fixed point, contradicting the fact that $f$ acts freely on the $1$-skeleton.
\end{proof}

\begin{rem}
    As in the case of \cftfs graphical small cancellation, there is significant deviation from the case of classical small cancellation complexes \cite{dudaall3}.
    In the classical case, $$\exists k\forall y, N(\mathfrak{X}_1(x,y))\cap f^k\mathfrak{X}_1(x,y)=\emptyset$$ unless $f^3=id$. In the graphical \css case not only there is no such global power, even weaker property $$\forall y\exists k,N(\mathfrak{X}_1(x,y))\cap f^k\mathfrak{X}_1(x,y)=\emptyset$$  can be obtained only for a cell that is not essentially \cs.
    Figure \ref{fig:petersenc6} shows an example of essentially \css fixed relator where such power cannot exist for a group element of order $5$. The analogues of such thickened cells exist for rotations of arbitrary order. Note that in case of even orders, such action by the rotation is not free on $1$-skeleton. 

    \begin{figure}[h!]
        \centering
        \includegraphics[width=0.4\linewidth]{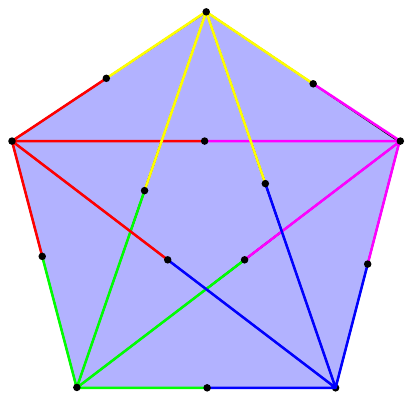}
        \caption{The group element $g$ of order $5$ acts on the central graphical cell (blue), whose $1$-skeleton is $5$-clique with subdivided edges, by rotation. The intersection assembly tree corresponding to $\mathfrak{X}_1(x,y)$ is marked in red. This tree has intersection with its images (other colors) for every power of $g$. Consequently for every non-trivial $g^k$ the intersection $N(\mathfrak{X}_1(x,y))\cap g^k\mathfrak{X}_1(x,y)$ is non-empty.}
        \label{fig:petersenc6}
    \end{figure}
\end{rem}

\begin{lm}\label{lem:doublingsys}

Let $x,y,z$ be three distinct vertices in $Y$, if $N(\mathfrak{X}_1(x,y))\cap \mathfrak{X}_1(x,z)=\emptyset$, then the concatenation of any geodesic from $y$ to $x$ and from $x$ to $z$ is again a geodesic.
\end{lm}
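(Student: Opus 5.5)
I will follow the proof of Lemma~\ref{lem:doubling}, replacing quadric square diagrams by systolic triangle diagrams. Here $Y$ is the Wise complex, which is systolic by Theorem~\ref{t:ss}. Let $\gamma=\{x=x_0,\dots,x_m=y\}$ and $\eta=\{x=x'_0,\dots,x'_n=z\}$ be geodesics.

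\textbf{Step 1: $\gamma\cup\eta$ is injective.} If $x_i=x'_j$ with $i>0$, then $i=j$. Hence $x'_0,\dots,x'_i,x_{i+1},\dots,x_m$ is a geodesic from $x$ to $y$, so $x'_1\in\mathfrak{X}_1(x,y)\cap\mathfrak{X}_1(x,z)\subseteq N(\mathfrak{X}_1(x,y))\cap\mathfrak{X}_1(x,z)$, a contradiction. We also note that $x_1$ and $x'_1$ are not adjacent; otherwise $x'_1\in N(\mathfrak{X}_1(x,y))$.

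\textbf{Step 2: set up a diagram and count curvature.} Take a geodesic $\alpha$ from $y$ to $z$ and a disc diagram $D_\alpha$ bounded by $\gamma\cup\eta$ and $\alpha$, chosen to minimize area. By Lemma~\ref{sys}, $D_\alpha$ is a CAT(0) triangle diagram. As in Lemma~\ref{lem:doubling}, it is a nonsingular disc with two possibly trivial paths attached at $y'$ and $z'$. Suppose $\gamma\cup\eta$ is not geodesic, so $D_\alpha$ has triangles. Proposition~\ref{gbsys} then gives boundary curvature at least $6$.

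By Proposition~\ref{geosys}, internal vertices of $\alpha$ have curvature at most $1$. A vertex of curvature exactly $1$ on $\alpha$ lies in a single boundary triangle with two edges on $\alpha$; shortcutting across it gives a geodesic $\alpha'$ with smaller area, so these curvatures are $\le 0$. The endpoints $y,z$ contribute at most $3$ each. When an endpoint gets more than $2$, i.e.\ exactly $3$ as a spur tip, the attachment vertex $y'$ or $z'$ is negative and compensates, as in the proof of Lemma~\ref{lmklcsv}. So the $\alpha$ side contributes at most about $4$, and the curvature along $\beta$ (the part of $\gamma\cup\eta$ inside the disc, including $x$) is at least $2$.

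\textbf{Step 3: use a plateau, or the curvature at $x$.} This is the main obstacle, because the triangle bookkeeping at $x$ differs from the square case. First, the curvature of $x$ itself. If $\kappa(x)\ge 2$, then $x$ has at most one triangle, which contains $x_1$ and $x'_1$ as adjacent vertices, contradicting Step 1. So $\kappa(x)\le 1$. Every other internal vertex of $\beta$ lies on a single geodesic and so has curvature at most $1$ by Proposition~\ref{geosys}.

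Proposition~\ref{plateautriangle} then yields a plateau along some subpath of $\beta$. Its two positive-curvature ends cannot both lie in the interior of one geodesic with only zero-curvature vertices between them, by Proposition~\ref{geosys}. Hence the plateau passes through $x$.

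\textbf{Step 4: derive the contradiction.} The bottom row of the plateau gives a vertex $x''$ adjacent to $x$, through which both a geodesic $x\to y$ and a geodesic $x\to z$ pass; this uses Figure~\ref{fig:plateausys}. So $x''\in\mathfrak{X}_1(x,y)\cap\mathfrak{X}_1(x,z)$. More precisely, the triangles $\hat F,\check F$ at $x$ give a neighbour of $x$ on $\gamma$ adjacent to one in $\mathfrak{X}_1(x,z)$. Either way this contradicts $N(\mathfrak{X}_1(x,y))\cap\mathfrak{X}_1(x,z)=\emptyset$, so $D_\alpha$ has no triangles and $\gamma\cup\eta$ is geodesic.

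The delicate point is the endpoint accounting in Step 2 and the claim $\kappa(x)\le1$ in Step 3. I expect the neighbourhood hypothesis, rather than plain disjointness, to be exactly what rules out the $\kappa(x)=2$ configuration.
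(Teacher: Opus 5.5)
Your outline follows the paper's proof step for step: injectivity of $\gamma\cup\eta$, a minimal-area CAT(0) triangle diagram, the bound $4$ on the $\alpha$ side, $\kappa(x)\le 1$ from the neighbourhood hypothesis, and a plateau through $x$. Your argument for $\kappa(x)\le 1$ is more explicit than the paper's. Step 4, however, does not work as written.

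\textbf{The main claim of Step 4 is false.} You assert a single bottom vertex $x''$ adjacent to $x$ lying in $\mathfrak{X}_1(x,y)\cap\mathfrak{X}_1(x,z)$. That is the square-plateau argument from Lemma~\ref{lem:doubling}. In a triangle plateau, $x$ has two distinct bottom neighbours, and the plateau only certifies one of them on a geodesic to $y$ and the other on a geodesic to $z$. If your claim held, the lemma would hold under plain disjointness, which it does not. Take the triangular tiling (the Wise complex of the hexagonal tiling), with $e_1,e_2$ unit lattice vectors at angle $\pi/3$, and set $x=0$, $y=n(e_1+e_2)$, $z=n(e_2-e_1)$, with $\gamma$ leaving $x$ along $e_1$. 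Then:
\begin{itemize}
\item $\mathfrak{X}_1(x,y)=\{e_1,e_2\}$ and $\mathfrak{X}_1(x,z)=\{e_2-e_1\}$ are disjoint;
\item $\kappa(x)=1$ in the flat trapezoidal diagram;
\item yet $d(y,z)=2n<3n$.
\end{itemize}
So the neighbourhood hypothesis is needed in the plateau step as well, not only to exclude $\kappa(x)=2$ as you expected.

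Your ``more precisely'' fallback is closer but still not right in general. The relevant vertex of $\mathfrak{X}_1(x,y)$ need not lie on $\gamma$. In the example it is $e_2$, whereas the neighbour $e_1$ of $x$ on $\gamma$ is not adjacent to $e_2-e_1$.

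\textbf{The correct finish, as in the paper.} The vertex $x$ lies in a plateau triangle $\hat F=\{x'_l,x,x'_r\}$ whose two bottom vertices are adjacent. The bottom row yields a geodesic $\gamma'$ from $x$ to $y$ through $x'_l$ and a geodesic $\eta'$ from $x$ to $z$ through $x'_r$, of the same lengths as $\gamma$ and $\eta$. Hence $x'_l\in\mathfrak{X}_1(x,y)$ and $x'_r\in\mathfrak{X}_1(x,z)$, so $x'_r\in N(\mathfrak{X}_1(x,y))\cap\mathfrak{X}_1(x,z)$, a contradiction.

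\textbf{A smaller slip in Step 2.} A boundary vertex of curvature $1$ in a triangle diagram lies in \emph{two} triangles sharing an interior edge. A single triangle would give curvature $2$, which is impossible at an internal vertex of a geodesic. Replacing the two $\alpha$-edges at that vertex by the other two edges of this pair keeps the length and removes two triangles, which is the area reduction you need.
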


\begin{proof}

Let $\gamma$ be a geodesic from $x$ to $y$ and let $\eta$ be a geodesic from $x$ to $z$. 

First observe that $\gamma\cap \eta=\{x\}$, that is, the concatenation of the inverse of $\gamma$ and $\eta$, denoted as $\gamma \cup \eta$, is an injective path. 
To see that, let $\gamma:=\{x=x_0, x_1,\ldots, x_m=y\}$, and let $\eta := \{x = x'_0, \ldots, x'_n = z\}$. Suppose there is $x_i = x'_j$ in $\gamma\cap \eta \supsetneq \{x\}$, then $i = d(x,x_i) = d(x,x'_j) = j$, and $\{x = x'_0, \ldots, x'_i = x_i, x_{i+1}, \ldots, x_m = y\}$ will be a geodesic from $x$ to $y$. If $i>0$, then $x'_1\in \mathfrak{X}_1(x,y)\cap \mathfrak{X}_1(x,z)\subset N(\mathfrak{X}_1(x,y))\cap \mathfrak{X}_1(x,z)$, contradicting our assumption, therefore, $i$ has to be 0, i.e., $x$ is the only common vertex of $\gamma$ and $\eta$.   

Let $\alpha$ be a geodesic in $Y$ between $y$ and $z$, and let $D_\alpha$ be a disc diagram in $Y$ bounded by $\gamma \cup \eta$ and $\alpha$. Choose $\alpha$ and $D_\alpha$ in a way that minimizes the area of $D_\alpha$.

Suppose that $\gamma\cup \eta$ is not a geodesic, then $D_\alpha$ contains at least one triangle.
As in the proof of Lemma \ref{lem:doubling}, $D_\alpha$ consists of a nonsingular disc diagram and two attached (potentially trivial) paths, where the disc is bounded by two paths from $y'$ to $z'$, one via $\gamma \cup \eta$ and the other via $\alpha$, and attached paths are $\gamma\cap \alpha$ and $\eta \cap \alpha$, attached at $y'$ and $z'$, respectively, cf. Figure \ref{fig:doublingdisc}.

Since $D_\alpha$ is a minimal area disc diagram in $Y$, by Lemma \ref{sys}, $D_\alpha$ is CAT(0), and then by Proposition \ref{gbsys}, the sum of curvature over vertices in its boundary $\partial D_\alpha$ is at least $6$. Let $\beta$ be the subpath of $\gamma\cup\eta$ between $y'$ and $z'$. Then the curvature along $\beta$ and the curvature of every vertex in $\alpha$ should sum up to be at least 6.

By Proposition \ref{geosys}, each internal vertex of $\alpha$ has curvature at most $1$. If any vertex has curvature exactly $1$, then it belongs to a boundary square $S$ with two edges along $\alpha$. Thus, $\alpha'$ obtained from $\alpha$ by switching to the internal edges of $S$ is also a geodesic. But $D_{\alpha'}$ has at least two fewer triangles than $D_{\alpha}$, a contradiction with the choice of $\alpha$.
Therefore, every internal vertex of $\alpha$ has nonpositive curvature in $D_\alpha$, and the ends $y$ and $z$ can have curvature at most 3.
We argue that the sum of curvature over the vertices in $\alpha$ is at most 4. 
If it were not, then the curvature of at least one of $y,z$ is 3. 
Assume $\kappa_D(y)=3$, then $y\neq y'$, and $y'$ has curvature at most $-1$, i.e. $\kappa_D(y')\leq -1$. Similarly, if the curvature of $z$ is 3, then $\kappa_D(z')\leq -1$. 
Since $y' \neq z'$, the sum of curvature over vertices in $\alpha$ cannot exceed 4.
In particular, the curvature along $\beta$ is at least 2.

Observe that every internal vertex of $\beta$ has curvature at most 1. 
Indeed, the curvature of $x$ cannot be $2$, as $N(\mathfrak{X}_1(x,y))\cap \mathfrak{X}_1(x,z)=\emptyset$, and every other internal vertex of $\beta$ is an internal vertex of either the geodesic $\gamma$ or $\eta$ and therefore has curvature at most $1$ by Proposition \ref{geosys}. 
Hence, by Proposition \ref{plateautriangle}, there exists a plateau $\mathcal{P}$ along $\beta$. 
Let $p_l,p_r$ denote the vertices of positive curvature at the ends of $\mathcal{P}$.
Since all vertices between $p_l$ and $p_r$ have curvature $0$, by Proposition \ref{geosys} vertices $p_l,p_r$ cannot both be internal vertices of a single geodesic. Without loss of generality, assume that $p_l\in \gamma$ and $p_r\in \eta$. Note that it is possible that one of $p_l$, $p_r$ is $x$.
Let $x'_l,x'_r$ be vertices at the bottom of $\mathcal{P}$ that are adjacent to $x$. By the existence of $\mathcal{P}$, there are geodesics $\gamma'$ from $x$ to $y$ through $x'_l$ and $\eta'$ from $x$ to $z$ through $x'_r$, cf. Figure \ref{fig:plateaudoublingsys}. Consequently, $x'_r\in N(\mathfrak{X}_1(x,y))\cap \mathfrak{X}_1(x,z)$. However, this contradicts the condition that the intersection is empty. Hence, $D_\alpha$ contains no triangle, and $\gamma\cup \eta$ is a geodesic. 

\begin{figure}[h!]
\begin{center}
\includegraphics[scale=1]{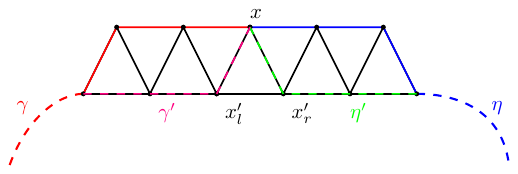}
\end{center}
\caption{Example of the plateau $\mathcal{P}$ and paths $\gamma',\eta'$}\label{fig:plateaudoublingsys}
\end{figure}

\end{proof}

\section{Lack of global fixed point implies existence of group element of infinite order}\label{sec:main2}

Throughout this section, we retain the assumptions on the complexes $X,Y$ and the group $G$ from the previous section.

First, we observe the following:

\begin{pr}\label{pr:singlefix}
For every non trivial $f\in G$ of finite order, $\mathrm{Fix}_Y(f)$ is nonempty. Moreover, in the \cftfs case, $\mathrm{Fix}_Y(f)$ consists of a unique vertex representing a graphical cell in $X$; in the \css case, if $\mathrm{Fix}_Y(f)$ contains a vertex representing a not essentially $C(6)$ cell, then $\mathrm{Fix}_Y(f)$ contains no other vertex. 
\end{pr}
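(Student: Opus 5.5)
Nonemptiness of $\mathrm{Fix}_Y(f)$ comes from Lemma~\ref{lm:fixpt}. The uniqueness statements will follow from the Helly properties combined with freeness of the action on $X^1$.

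\emph{Nonemptiness.} Let $f\in G$ be nontrivial of finite order $m$. The cyclic group $\langle f\rangle$ is finite and acts freely on $X^1$. By Lemma~\ref{lm:fixpt}, $\langle f\rangle$ stabilizes some thick cell $\Gamma_i$ of $X_t$. Vertices of $Y$ (quadrization or Wise complex) that correspond to graphical cells are exactly the cone tips $\widehat{\Gamma}_i$. In the Wise complex these are all the vertices, since $W(X)$ is the nerve of the covering by graphical cells. The tip $\widehat{\Gamma}_i$ is fixed by $f$, because the action on $Y$ is induced from the action on $X$ and the presentation is simplified: stabilizing the thick cell means stabilizing the component $\Gamma_i$, hence the cone. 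So $\mathrm{Fix}_Y(f)\neq\emptyset$.

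\emph{Uniqueness in the \cftfs case.} First, no vertex of $Y$ of the form $v\in\Theta$ is fixed, since $f$ acts freely on $X^1$. Every edge and square of $Y$ contains a vertex of $\Theta$, so $f$ can fix a point of $Y$ only at a vertex. Suppose $f$ fixes two distinct tips $\widehat{\Gamma}_1,\widehat{\Gamma}_2$. Then $f$ stabilizes the graphical cells $\Gamma_1$ and $\Gamma_2$, which are embedded by Lemma~\ref{lem:intersections}(1).
\begin{itemize}
\item If $\Gamma_1\cap\Gamma_2\neq\emptyset$, Lemma~\ref{lem:intersections}(2) says it is a finite tree. This tree is $f$-invariant, and a finite-order automorphism of a finite tree fixes a vertex or an edge (the center). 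A fixed edge forces $f^2$ to fix a vertex, and fixing a vertex contradicts freeness outright. So $f^2$ has a fixed vertex, hence $f^2=1$. An involution of a tree swapping the endpoints of an edge acts freely on vertices, so this residual case would have to be ruled out separately. I would rule it out by the $C(4)$ condition (an inverted edge lying in an embedded cell), or by directly noting that freeness on $X^1$ is meant as freeness on cells, so edge inversions are excluded.
\item If $\Gamma_1\cap\Gamma_2=\emptyset$, take the nonempty, finite set of geodesics in $Y$ from $x=\widehat{\Gamma}_1$ to $y=\widehat{\Gamma}_2$. It is preserved by $f$, so $f$ permutes $\mathfrak{X}_1(x,y)$. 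By the claim in the proof of Lemma~\ref{lmklcftf}, there is a cell $\mathfrak{x}\in\mathfrak{X}_2(x,y)$ with $\mathfrak{X}_1(x,y)\subseteq x\cap\mathfrak{x}$. Hence $x,\mathfrak{x},f\mathfrak{x},\dots$ pairwise intersect, and by Lemma~\ref{lem:flag} their common intersection is a nonempty $f$-invariant finite tree. Equivalently, $f\mathfrak{X}_1(x,y)=\mathfrak{X}_1(x,y)$, which contradicts Lemma~\ref{lmklcftf}, since that lemma gives a $k$ with $\mathfrak{X}_1\cap f^k\mathfrak{X}_1=\emptyset$. This is the cleanest route.
\end{itemize}
Either way we reach a contradiction, so the fixed vertex is unique.

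\emph{Uniqueness in the \css case.} Suppose $x$ corresponds to a not essentially \css cell and $y\neq x$ is also fixed. Then $f^k\mathfrak{X}_1(x,y)=\mathfrak{X}_1(x,y)$ for all $k$, so $N(\mathfrak{X}_1(x,y))\cap f^k\mathfrak{X}_1(x,y)\supseteq \mathfrak{X}_1(x,y)\neq\emptyset$. This contradicts Lemma~\ref{lmklcsv}. In the Wise complex, fixed points of simplices reduce to fixed vertices after passing to the barycentric picture, and the statement only concerns vertices anyway. The same argument also streamlines the \cftfs case: a fixed $y\neq x$ gives $f^k\mathfrak{X}_1(x,y)=\mathfrak{X}_1(x,y)$ for all $k$, contradicting Lemma~\ref{lmklcftf}.

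\emph{Main obstacle.} The delicate step is justifying that $f$ fixes a cell vertex of $Y$, and that any fixed point of $Y$ can be taken to be such a vertex. This rests on the correspondence between the actions on $X_c$ and on $Y$, together with the assumption that presentations are simplified.
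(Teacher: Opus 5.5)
Your proof is correct and uses the same key inputs as the paper: Lemma~\ref{lm:fixpt} for nonemptiness, freeness to exclude fixed vertices of $\Theta$, and Lemmas~\ref{lmklcftf} and \ref{lmklcsv} for uniqueness. Where you differ is the final step, and your version is the more direct one.

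The paper takes the exponent $k$ supplied by Lemma~\ref{lmklcftf} (resp.\ \ref{lmklcsv}) and feeds it into Lemma~\ref{lem:doubling} (resp.\ \ref{lem:doublingsys}). This produces a positive-length geodesic whose two endpoints coincide. You observe instead that, since $f$ fixes both $x$ and $y$, one has $f^k\mathfrak{X}_1(x,y)=\mathfrak{X}_1(f^kx,f^ky)=\mathfrak{X}_1(x,y)\neq\emptyset$ for every $k$. This already contradicts the conclusion of Lemma~\ref{lmklcftf} (resp.\ \ref{lmklcsv}). Your route avoids the doubling lemmas entirely. It also avoids a small awkwardness: those lemmas are stated for three distinct vertices, whereas in the paper's application the outer endpoints coincide.

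Everything you wrote before reaching this streamlined argument can be dropped, and parts of it are inaccurate.
\begin{itemize}
\item The $C(4)$ condition does not exclude edge inversions. Reflect the square tiling of the plane across the perpendicular bisector of an edge: this stabilizes both adjacent squares, inverts the shared edge, and fixes no vertex. (Under a vertex-only reading of freeness, this would even be a counterexample to the statement.) What excludes inversions is that freeness on $X^1$ means freeness on the topological graph, which is already how freeness is used inside the proof of Lemma~\ref{lmklcftf}.
\item Without local finiteness, the set of geodesics from $x$ to $y$ need not be finite, since infinitely many cells can share a piece. You only need that $f$ permutes $\mathfrak{X}_1(x,y)$.
\item The remark that every square contains a vertex of $\Theta$ does not show that fixed points of $Y$ lie at vertices. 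Indeed, $f$ may map a square $\widehat{\Gamma}_a\,v\,\widehat{\Gamma}_b\,w$ to itself while swapping $v$ and $w$. The statement, like the paper's proof, concerns only fixed vertices, so this remark is unnecessary.
\end{itemize}
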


\begin{proof}
In the case of a \css complex, all vertices of $Y$ correspond to graphical cells of $X$. In the case of \cftf, there are two sets of vertices in $Y$, one corresponding to graphical cells of $X$ and one corresponding to vertices in $X$. 
By our assumption of freeness, there can be no vertices in $\mathrm{Fix}_Y(f)$ that represent vertices of $X$.

By Lemma \ref{lm:fixpt}, every action of a cyclic subgroup of $G$ fixes the tip of some cone in a non-thickened graphical complex $X_c$, and therefore stabilizes a graphical cell in $X$. Thus $\mathrm{Fix}_Y(f)$ contains at least one such vertex of $Y$. 

If an element $f\in G$ fixes two distinct graphical cells $\mathfrak{x}$ and $\mathfrak{y}$, let $\gamma$ be a geodesic between the vertices $x$ and $y$ corresponding to these cells in $Y$.
In the \cftfs case, by Lemmas \ref{lmklcftf} and \ref{lem:doubling}, there exists a power of $f$ such that $\gamma\cup f^k\gamma$ is a geodesic between $x$ and $f^kx =x$, a contradiction. 
In the \css case, assume that $\mathfrak{x}$ is not essentially $C(6)$, then by Lemma \ref{lmklcsv} and \ref{lem:doublingsys}, a similar contradiction follows.
\end{proof}

From now on, we assume that $x\in \mathrm{Fix}_Y(f)$ is a vertex corresponding to a graphical cell $\mathfrak{x}$ of $X$ stabilized by $f$. 
In the \css case, we additionally assume that the graphical cell $\mathfrak{x}$ is not essentially \cs.

\begin{lm}\label{lem:inford}
If $g\in G$ is an element that does not fix $x$, then there exists $w\in \langle f,g\rangle$ with infinite order.
\end{lm}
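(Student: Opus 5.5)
The plan is a ping-pong/doubling argument using Lemmas \ref{lmklcftf}, \ref{lem:doubling} (resp. \ref{lmklcsv}, \ref{lem:doublingsys}). The idea is to show that for a suitable power $f^k$, the element $w=f^k g$ (or a conjugate variant) translates along a bi-infinite geodesic in $Y$ through the orbit of $x$. It therefore has unbounded orbits and cannot have finite order.

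Set $y:=g^{-1}x$. Since $g$ does not fix $x$, we have $y\neq x$. By Lemma \ref{lmklcftf} (in the \cftfs case) or Lemma \ref{lmklcsv} (in the \css case, using that $\mathfrak{x}$ is not essentially \cs), choose $k$ with $\mathfrak{X}_1(x,y)\cap f^k\mathfrak{X}_1(x,y)=\emptyset$, respectively $N(\mathfrak{X}_1(x,y))\cap f^k\mathfrak{X}_1(x,y)=\emptyset$. Since $f^kx=x$, we have $f^k\mathfrak{X}_1(x,y)=\mathfrak{X}_1(x,f^ky)$. Put $z:=f^ky$; then $z\ne y$, since otherwise the two sets would coincide and, being nonempty, intersect. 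Lemma \ref{lem:doubling} (resp. \ref{lem:doublingsys}) now shows that for any geodesic $\gamma$ from $x$ to $y$, the path $\bar\gamma\cdot f^k\gamma$ is a geodesic from $y$ to $f^ky$ through $x$.

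Next, take $w:=g f^{k}$ or a similar word, chosen so that $w$ maps $y$ to $x$ and $x$ to $gz$. The concatenation
\[
\cdots,\; w^{-1}\bar\gamma,\ \ w^{-1}f^k\gamma,\ \ \bar\gamma,\ \ f^k\gamma,\ \ w\bar\gamma,\ \ldots
\]
then forms a $w$-invariant bi-infinite path built from translates of $\bar\gamma\cdot f^k\gamma$. At each junction vertex $w^jx$, the local picture is a translate of the configuration at $x$. Hence each two-step piece through a junction is geodesic, by the doubling lemma applied at that junction. Concretely, I would rather set things up symmetrically. Let $u:=g^{-1}$ and note that $x$ is fixed by $f$ while $gx\neq x$. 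Then consider $w=f^k g$ acting on the path $x, gx$ with geodesic $\delta$ from $x$ to $gx$. At the vertex $gx$, the incoming direction is $g(\text{direction to }g^{-1}x)$ and the outgoing direction is $g f^k(\ldots)$. This reduces, after translating by $g^{-1}$, exactly to the disjointness condition at $x$ between $\mathfrak{X}_1(x,g^{-1}x)$ and $f^k\mathfrak{X}_1(x,g^{-1}x)$ established above. The path $\delta\cdot w\delta\cdot w^2\delta\cdots$ is then locally geodesic at every junction.

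The main obstacle is passing from ``geodesic across each single junction'' to ``globally geodesic'', i.e.\ showing $d(x,w^n x)=n\,d(x,gx)$. I would prove by induction on $n$ that $\delta\cdot w\delta\cdots w^{n-1}\delta$ is geodesic. The key point is that the first-step sets are controlled: for the geodesic from $w^{n-1}x$ backwards to $x$, the set $\mathfrak{X}_1$ at the junction is contained in (or equals) the set of first steps toward the previous junction. To see this, run the minimal-area disc and plateau argument of Lemma \ref{lem:doubling} again. If a geodesic from the junction to $x$ had its first step outside the translated $\mathfrak{X}_1(x,g^{-1}x)$, a plateau would be produced adjacent to the junction, contradicting the single-junction statement. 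With this monotonicity, Lemma \ref{lem:doubling} (resp. \ref{lem:doublingsys}) applies at the last junction with $y$ replaced by $x$ and $z=w^n x$, and gives the induction step. Consequently $d(x,w^nx)\to\infty$, so $w$ has infinite order, and $w\in\langle f,g\rangle$.
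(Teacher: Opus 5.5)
There are two genuine gaps, and the second is exactly the difficulty that the paper's proof is designed to overcome.

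First, the junction condition you need is not the one Lemma~\ref{lmklcftf} supplies. For $\delta\cdot w\delta$ to be a path you need $wx=gx$, i.e.\ $w=gf^k$; with $w=f^kg$ one gets $wx=f^kgx$. Translating the junction at $gx$ back by $g^{-1}$, Lemma~\ref{lem:doubling} would require $\mathfrak{X}_1(x,g^{-1}x)\cap f^k\mathfrak{X}_1(x,gx)=\emptyset$. This compares two different sets, whereas Lemma~\ref{lmklcftf} only yields $\mathfrak{X}_1(x,y)\cap f^k\mathfrak{X}_1(x,y)=\emptyset$ for a single $y$. Likewise, the geodesic $\bar\gamma\cdot f^k\gamma$ from $g^{-1}x$ to $f^kg^{-1}x$ that you built first is not a piece of your path unless $g^2x=x$. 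To get conditions of the correct shape at both ends of a segment, you need an elliptic element fixing each end. The paper arranges this by replacing $g$ with $gfg^{-1}$ if necessary, so that $g$ has finite order and fixes a unique vertex $y\neq x$. It then reflects a geodesic between $x$ and $y$ alternately by powers of $f$, $g$ and their conjugates.

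Second, and more seriously, the passage from single junctions to global geodesicity fails. Lemma~\ref{lem:doubling} needs disjointness of the first-step sets toward the actual endpoints of the two geodesics being concatenated. Let $J$ be the current junction, $C$ the previous one and $A$ the far endpoint, with $C$ on a geodesic from $J$ to $A$. Then every geodesic from $J$ to $C$ extends to one from $J$ to $A$, so $\mathfrak{X}_1(J,C)\subseteq\mathfrak{X}_1(J,A)$. This is the opposite of the inclusion you claim, and it can be strict: in the square tiling of the plane take $J=(0,0)$, $C=(2,0)$, $A=(3,1)$. So the first-step set toward the far end may grow as the path lengthens, and an exponent that works for the short segment can fail for the longer one. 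No plateau argument rules this out, because the single-junction statement says nothing about other geodesics from $J$ to $A$. This is precisely the phenomenon in the remark after Lemma~\ref{lmklcftf} that no global power exists.

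The paper handles it by assuming $\langle f,g\rangle$ is torsion, so that all the conjugates $u_{j,i}$ are elliptic, and by tracking the admissible exponent sets $\mathfrak{P}$. At each level it reflects with a fixed exponent $l_j$, always testing disjointness toward the far endpoint $y_{j,0}$. Either this continues forever, in which case $w_{j,2}$ translates along a geodesic ray and has infinite order, or it stops, in which case the admissible set strictly shrinks by the Claim. An infinite strictly decreasing chain of nonempty subsets of the finite set $\mathfrak{P}(y,x;g)$ is impossible. Without some such descent argument your induction does not close, and nothing in it shows that a fixed word such as $gf^k$, with $k$ chosen from one junction, has infinite order.
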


\begin{proof}
Without loss of generality, we may assume that $g$ is a nontrivial element of finite order, and $\mathrm{Fix}_Y(g)$ contains exactly one vertex $y$ which is distinct from $x$ and represents a graphical cell in $X$. 
In the $C(6)$ case, we additionally assume that the graphical cell which $y$ represents is not essentially $C(6)$. 
If any of these assumptions is not satisfied by the original $g$, we can simply carry out the proof with $\bar{g}=gfg^{-1}$ and $\bar{y}=gx$ and establish the conclusion for $\bar{g}$, the conclusion for $g$ then follows.

We show the proof in the case of \cftfs complexes. The case of \css  complexes is analogous, with differences explained at the end of the proof.

Suppose that every element in $\langle f,g\rangle$ has finite order.
Take any geodesic $\gamma$ between $x$ and $y$. 
Set $v_0:=f,u_{0,0}:=g$, $y_{0,0}:=y$. To improve the readability of the proof, for now we will continue using the notation $f,g,y$.
By Lemmas \ref{lmklcftf} and \ref{lem:doubling} we can find $k_0$ such that $\gamma_{0,0}:=\gamma\cup f^{k_0 }\gamma$ is a geodesic.
Let $m$ be the order of $g$, and let the admissible exponent set $\mathfrak{P}(y,f^{k_0}y;g)$ (resp. $\mathfrak{P}(f^{k_0}y,y;f^{k_0}gf^{-k_0})$) be the set of all possible $0< \ell < m$ such that $\mathfrak{X}_1(y,f^{k_0}y)\cap g^\ell\mathfrak{X}_1(y,f^{k_0}y)=\emptyset$, (resp. $\mathfrak{X}_{1}(f^{k_0}y,y)\cap f^{k_0}g^\ell f^{-k_0}\mathfrak{X}_{1}(f^{k_0}y,y)=\emptyset$). 
By definition both those sets have at most $m-1$ elements, and their nonemptiness follows from Lemma \ref{lmklcftf}.

Set $u_{0,1}:= v_{0}^{k_0}u_{0,0}v_{0}^{-k_0}$,  $y_{0,1}:=v_{0}^{k_0}y_{0,0}$, and $w_{0,0}$ as the trivial element. It is clear that $u_{0,1}$ fixes $y_{0,1}$, $u_{0,1}$ is again a nontrivial finite order element in $G$, and $y_{0,1}$ corresponds to a graphical cell in $X$.  
Therefore, by Lemma \ref{lmklcftf}, $\mathfrak{P}(y_{0,1},y_{0,0};u_{0,1}) = \mathfrak{P}(f^{k_0}y,y;f^{k_0}gf^{-k_0})$ is nonempty.

Fix an arbitrary element $l_0\in \mathfrak{P}(y_{0,1},y_{0,0};u_{0,1})$. We now define, inductively and for as long as possible, vertices $y_{0,n}$, group elements $u_{0,n}$ and $w_{0,n}$, and geodesics $\gamma_{0,n}$.
    
For $n\geq 1$, suppose that $y_{0,n-1}$, $y_{0,n}$, $u_{0,n-1}$, $u_{0,n}$, $\gamma_{0,n-1}$, $w_{0,n-1}$ have already been defined, and satisfy the following properties:
$y_{0,n-1}$ and $y_{0,n}$ are vertices in $Y$ representing graphical cells;
$u_{0,n-1}$ and $u_{0,n}$ are two nontrivial finite order elements in $G$ fixing $y_{0,n-1}$ and $y_{0,n}$ respectively; 
$\gamma_{0,n-1}$ is a geodesic between $y_{0,0}$ and $y_{0,n}$; 
$w_{0,n-1}$ is an element in $G$ such that $w_{0,n-1}\gamma_{0,0}$ is a subpath of $\gamma_{0,n-1}$ between $y_{0,n-1}$ and $y_{0,n}$.

If $l_0\notin \mathfrak{P}(y_{0,n},y_{0,0};u_{0,n})$, then the construction stops at step $n-1$. 
Otherwise, we define
$\gamma_{0,n}:=\gamma_{0,n-1}\cup u^{l_0}_{0,n} w_{0,n-1}\gamma_{0,0}$,
$u_{0,n+1}:=u^{l_0}_{0,n}u_{0,n-1}u^{-l_0}_{0,n}$, $y_{0,n+1}:=u^{l_0}_{0,n}y_{0,n-1}$, and 
$w_{0,n}:=u^{l_0}_{0,n} w_{0,n-1}$. 
Then $y_{0,n+1}$ again represents a graphical cell in $X$, and $u_{0,n+1}$ is again a nontrivial finite order element of $G$ fixing $y_{0,n+1}$.
Moreover, by Lemma \ref{lem:doubling}, $\gamma_{0,n}=\gamma_{0,n-1}\cup u^{l_0}_{0,n} w_{0,n-1}\gamma_{0,0}$ is a geodesic from $y_{0,0}$ to $y_{0,n+1}$. 
Finally, by the definition of $w_{0,n}$, the path $w_{0,n}\gamma_{0,0}$ is a subpath of $\gamma_{0,n}$ between $y_{0,n}$ and $y_{0,n+1}$. 

We claim that this construction must stop after finitely many steps. 
Suppose, for a contradiction, that it never stops.
Then the union $\gamma_{0,\infty}:=\bigcup_{n\geq 0}\gamma_{0,n}$ is an infinite geodesic ray. 
Moreover, the subpaths $w_{0,n}\gamma_{0,0}$ appear consecutively along $\gamma_{0,\infty}$. 
Since $u_{0,n+1}^{l_0}u_{0,n}^{l_0}=u_{0,n}^{l_0}u_{0,n-1}^{l_0}$, it follows inductively that
$$
w_{0,n}=\left\{\begin{array}{cc}
     (u_{0,1}^{l_0}u_{0,0}^{l_0})^{\frac{n}{2}} & \mbox{for even } n
     \\
     (u_{0,1}^{l_0}u_{0,0}^{l_0})^{\lfloor\frac{n}{2}\rfloor} u_{0,1}^{l_0}& \mbox{for odd } n.
\end{array}\right.
$$
In particular, $w_{0,2n}=(w_{0,2})^n$ for every $n$.
Hence $w_{0,2}$ acts on $\gamma_{0,\infty}$ by a non-trivial translation, and therefore has infinite order. 
This contradicts our assumption that $\langle f,g\rangle$ is a torsion group, since $w_{0,2}\in \langle f,g\rangle$. Therefore the construction must stop after finitely many steps.


We refer to the construction with fixed first index $j$ as the extension at the $j$-th level. 
Thus the construction above is the extension at the $0$-th level. 
Since it stops after finitely many steps, there is a first integer $n_0\geq 1$ such that
$y_{0,n_0},\quad u_{0,n_0},\quad \gamma_{0,n_0-1}$ are defined, but 
$l_0\notin \mathfrak P(y_{0,n_0},y_{0,0};u_{0,n_0})$.
Equivalently, the extension at the $0$-th level terminates after $n_0-1$ steps.

We now repeat the same construction at successive levels.
Suppose that, for some $j\geq 1$, the construction has already been carried out up to level $j-1$, and the extension at the $(j-1)$-th level terminates after $n_{j-1}-1$ steps. 

In particular, assume that $y_{j-1,0}$, $y_{j-1,n_{j-1}}$, $u_{j-1,0}$, $u_{j-1,n_{j-1}}$,$\gamma_{j-1,n_{j-1}-1}$ have already been defined such that $u_{j-1,0}$ and $u_{j-1,n_{j-1}}$ fix $y_{j-1,0}$ and $y_{j-1,n_{j-1}}$ respectively, and 
$\gamma_{j-1,n_{j-1}-1}$ is a geodesic between $y_{j-1,0}$ and $y_{j-1,n_{j-1}}$.


Set $v_{j}:=u_{j-1,0}$, $y_{j,0}:=y_{j-1,n_{j-1}}$, and $u_{j,0}:=u_{j-1,n_{j-1}}$.
Choose $k_j\in \mathfrak{P}(y_{j-1,0},y_{j,0};v_j)$ and define $y_{j,1}:=v_{j}^{k_j} y_{j,0}$,  $u_{j,1}:=v_j^{k_j}u_{j,0}v_j^{-k_j}$, and 
$\gamma_{j,0}:=\gamma_{j-1,n_{j-1}-1}\cup v_j^{k_j}\gamma_{j-1,n_{j-1}-1}.$
By Lemma \ref{lem:doubling}, $\gamma_{j,0}$ is a geodesic from $y_{j,0}$ to $y_{j,1}$. 
Now choose $l_j\in \mathfrak{P}(y_{j,1},y_{j,0};u_{j,1})$.

As before, we define $y_{j,i}$, $u_{j,i}$, $w_{j,i}$, and $\gamma_{j,i}$ inductively for as long as possible. 
Suppose $i\geq 1$, and assume that these objects have been defined up to step $i-1$ satisfying: $\gamma_{j,i-1}$ is a geodesic between $y_{j,0}$ and $y_{j,i}$, and $w_{j,i-1}\gamma_{j,0}$ is a subpath of $\gamma_{j,i-1}$ between $y_{j,i-1}$ and $y_{j,i}$.
If $l_j\notin \mathfrak{P}(y_{j,i},y_{j,0};u_{j,i})$, then the construction stops at step $i-1$. 
Otherwise, define
$w_{j,i}:=u_{j,i}^{l_j}w_{j,i-1}$,
$\gamma_{j,i}:=\gamma_{j,i-1}\cup w_{j,i}\gamma_{j,0}$, 
$u_{j,i+1}:=u_{j,i}^{l_j}u_{j,i-1}u_{j,i}^{-l_j}$,
$y_{j,i+1}:=u_{j,i}^{l_j}y_{j,i-1}$,
where $w_{j,0}$ is the trivial element, see Figure \ref{fig:extension}.
\begin{figure}
    \centering
    \includegraphics[width=\linewidth]{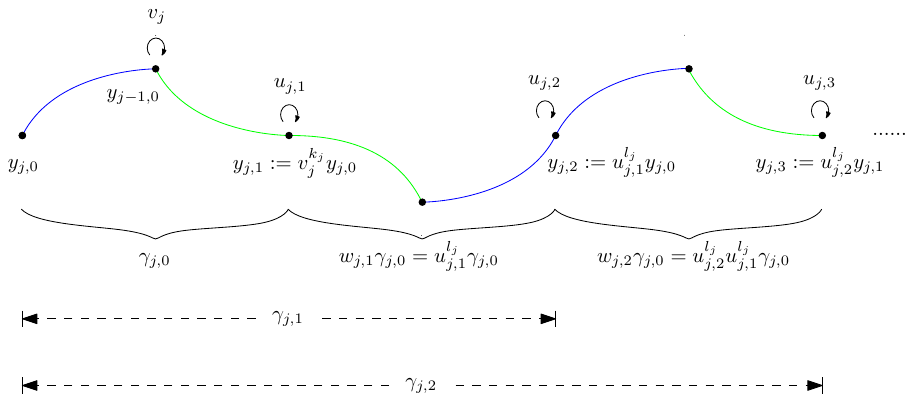}
    \caption{Extension at the $j$-th level}
    \label{fig:extension}
\end{figure}
Then $u_{j,i+1}$ fixes $y_{j,i+1}$, and by Lemma \ref{lem:doubling}, $\gamma_{j,i}$ is a geodesic from $y_{j,0}$ to $y_{j,i+1}$, and $w_{j,i}\gamma_{j,0}$ is its terminal subpath between $y_{j,i}$ and $y_{j,i+1}$.

Again, we claim that the extension at the $j$-th level must stop after finitely many steps. 
Indeed, suppose that it never stops, then the union $\gamma_{j,\infty}:=\bigcup_{i\geq 0}\gamma_{j,i}$ is an infinite geodesic ray. 
As in case $j=0$, the relations $u_{j,i+1}^{l_j}u_{j,i}^{l_j}
=u_{j,i}^{l_j}u_{j,i-1}^{l_j}$ give 
$$
w_{j,i} =\left\{\begin{array}{cc}
     (u_{j,1}^{l_{j}}u_{j,0}^{l_{j}})^{\frac{i}{2}} & \mbox{if } i \mbox{ is even} 
     \\
     (u_{j,1}^{l_{j}}u_{j,0}^{l_{j}})^{\lfloor\frac{i}{2}\rfloor} u_{j,1}^{l_{j}}& \mbox{if } i \mbox{ is odd}.
\end{array}\right.
$$
In particular, $w_{j,2i}=(w_{j,2})^i$ for every $i$.
Hence $w_{j,2}$ acts on $\gamma_{j,\infty}$ by a non-trivial translation, and therefore has infinite order. 
This contradicts the assumption that every element of $\langle f,g\rangle$ has finite order, since $w_{j,2}\in \langle f,g\rangle$. 
Therefore the construction must stop after finitely many steps. Let $n_j\geq 1$ be the first integer such that $y_{j,n_j},\quad u_{j,n_j},\quad \gamma_{j,n_j-1}$
are defined, but 
$l_j\notin \mathfrak P(y_{j,n_j},y_{j,0};u_{j,n_j})$.
Thus the extension at the $j$-th level terminates after $n_j-1$ steps.

We have therefore shown that, assuming $\langle f,g\rangle$ is a torsion group, the construction can always be continued to the next level, and therefore $j$ can be arbitrarily large. 
We now show that this is impossible. The obstruction is that the sets of admissible exponents form a strictly descending sequence of non-empty subsets of a fixed finite set.

\begin{figure}[h!]
\begin{center}
\includegraphics[scale=0.8]{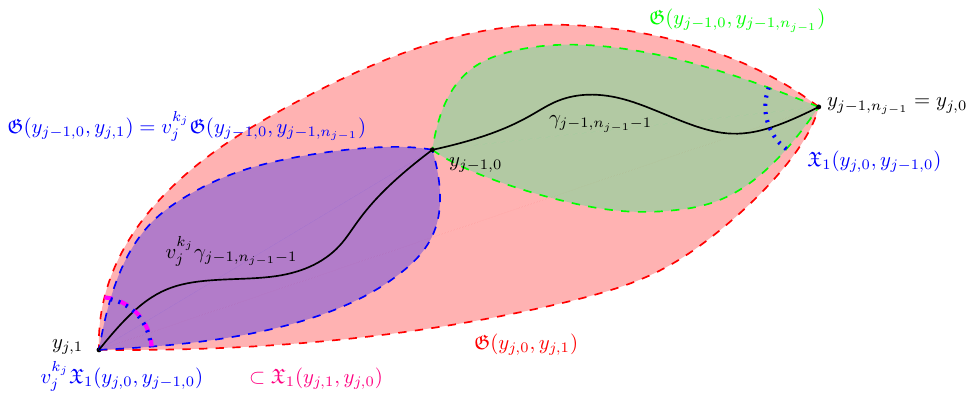}
\end{center}
\caption{Inclusions of geodesic sets used to compare admissible exponent sets.}
\label{fig:geodesicsets}
\end{figure}

\begin{cl}
Set $y_{-1,0}\coloneqq x$, for each $j\geq 0$, $\mathfrak{P}(y_{j+1,0},y_{j,0};u_{j+1,0}) \subsetneq \mathfrak{P}(y_{j,0},y_{j-1,0};u_{j,0})$.
\end{cl}

\begin{proof}
Recall that $\mathfrak{P}(y_{j+1,0},y_{j,0};u_{j+1,0}) = \mathfrak{P}(y_{j,n_j},y_{j,0};u_{j,n_j})$, it suffices to show $\mathfrak{P}(y_{j,n_j},y_{j,0};u_{j,n_j})$ $\subsetneq \mathfrak{P}(y_{j,0},y_{j-1,0};u_{j,0})$.
Observe that for all $i\geq 2$, we have
\[
\mathfrak{P}(y_{j,i},y_{j,i-1};u_{j,i})
= \mathfrak{P}(u_{j,i-1}^{-l_j} y_{j,i} ,y_{j,i-1}; u_{j,i-1}^{-l_{j}}u_{j,i}u_{j,i-1}^{l_j})
= \mathfrak{P}(y_{j,i-2} ,y_{j,i-1}; u_{j,i-2})
\]
and $\mathfrak{P}(y_{j,i-1},y_{j,i};u_{j,i-1}) = \mathfrak{P}(y_{j,i-1},u_{j,i-1}^{l_{j}} y_{j,i-2};u_{j,i-1})= \mathfrak{P}(y_{j,i-1} ,y_{j,i-2}; u_{j,i-1})$. Therefore,
for odd $i$, we have
$$\mathfrak{P}(y_{j,i},y_{j,i-1};u_{j,i})
= \mathfrak{P}(y_{j,1},y_{j,0};u_{j,1}).$$ 
As $y_{j-1,0}$ lies on a geodesic between $y_{j,0}$ and $y_{j,1}$, we have $ \mathfrak{X}_1(y_{j,1},y_{j-1,0})\subset \mathfrak{X}_1(y_{j,1},y_{j,0})$, cf. Figure \ref{fig:geodesicsets}, which implies 
$$\mathfrak{P}(y_{j,1},y_{j,0};u_{j,1})
\subset \mathfrak{P}(y_{j,1},y_{j-1,0};u_{j,1}).$$
Similarly, for even $i$, we have
$$\mathfrak{P}(y_{j,i},y_{j,i-1};u_{j,i}) 
= \mathfrak{P}(y_{j,0},y_{j,1};u_{j,0}) \subset \mathfrak{P}(y_{j,0},y_{j-1,0};u_{j,0}).$$ 

Moreover, 
$\mathfrak{P}(y_{j,1},y_{j-1,0};u_{j,1})
= \mathfrak{P}(v_j^{-k_j}y_{j,1},y_{j-1,0};v_j^{-k_j}u_{j,1}v_j^{k_j})
= \mathfrak{P}(y_{j,0},y_{j-1,0};u_{j,0})
$.

Thus,
$\mathfrak{P}(y_{j,n_j},y_{j,0};u_{j,n_j})\subseteq \mathfrak{P}(y_{j,0},y_{j-1,0};u_{j,0})$. 
Since $l_j\in \mathfrak{P}(y_{j,1},y_{j,0};u_{j,1})\subseteq \mathfrak{P}(y_{j,0},y_{j-1,0};u_{j,0})$, and it does not belong to $\mathfrak{P}(y_{j,n_j},y_{j,0};u_{j,n_j})$, the inclusion is proper.

\end{proof}

By Lemma \ref{lmklcftf}, each set $\mathfrak{P}(y_{j+1,0},y_{j,0};u_{j+1,0})$ is non-empty. 
Moreover, $\mathfrak{P}(y_{0,0},y_{-1,0};u_{0,0})=\mathfrak{P}(y,x;g)$ is finite. The claim therefore gives an infinite strictly descending sequence of non-empty subsets of the finite set $\mathfrak{P}(y,x;g)$, which is impossible.

This contradiction shows that our initial assumption was false. Hence $\langle f,g\rangle$ contains an element of infinite order.

In the case of \css complexes, we use the additional assumption that $x$ and $y$ correspond to not essentially \css cells.  
Every application of Lemmas \ref{lmklcftf} and \ref{lem:doubling} is replaced by the corresponding application of Lemmas \ref{lmklcsv} and \ref{lem:doublingsys}, respectively. 
Moreover, in this case, the admissible exponent set $\mathfrak{P}(y,x;g)$ should be defined as the set of exponents $0<\ell<m$ such that $N(\mathfrak{X}_1(y,x))\cap g^\ell\mathfrak{X}_1(y,x)= \emptyset$.

\end{proof}

\section{Proofs of Theorems \ref{thm:tA}-\ref{thm:tB}}\label{sec:proofs}

\begin{proof}[Proof of Theorem \ref{thm:tB}]
If $G$ is trivial, there is nothing to prove. Assume that $G$ is nontrivial.
By Lemma \ref{lm:fixpt}, every element of the torsion group $G$ stabilizes a graphical cell of $X$. 
Choose a nontrivial element $f\in G$, and let $\Gamma_i$ be a graphical cell stabilized by $f$. 
In the \css setting, if the action is not essentially \css, we may choose $f$ so that $\Gamma_i$ is not essentially \css.

We claim that $G$ stabilizes $\Gamma_i$. Indeed, let $g\in G$. If $g$ does not stabilize $\Gamma_i$, then Lemma \ref{lem:inford} implies that $\langle f,g\rangle< G$ contains an element of infinite order, contradicting the assumption that $G$ is torsion.
Hence, every $g\in G$ stabilizes $\Gamma_i$.

Since $\Gamma_i$ is finite and the action of $G$ on the 1-skeleton of $X$ is free, the action of $G$ on the vertex set of $\Gamma_i$ is also free. 
Therefore, $G$ is finite.
\end{proof}

Theorems \ref{thm:tA} and \ref{thm:tD} are easy corollaries.

\begin{proof}[Proofs of Theorem \ref{thm:tA} and \ref{thm:tD}]
Let $\langle f:\Gamma \rightarrow \Theta \rangle$ be a graphical presentation of $G$, and let $\tilde{X}^{+}$ be the simplified universal cover of the graphical complex $X$.

It is clear that action of $G$ on $\tilde{X}^{+}$ is free on a $1$-skeleton. This property is inherited by any torsion subgroup $H$ of $G$. Therefore, by Theorem \ref{thm:tB}, $H$ is finite.

\end{proof}

\section{Automatic Continuity}\label{sec:ac}

In this section, we present an aforementioned application of Theorem~\ref{thm:tB} to automatic continuity.

For a group $G$ and a metric space $X$, a group action $\Phi:G\rightarrow Isom(X)$ is called \textbf{proper} if for each $x\in X$ there exist a real number $r>0$ such that the set $\{g\in G | (B_r(\Phi(g)(x))\cap B_r(x))\neq 0\}$ is finite.
The group action $\Phi$ is called \textbf{cocompact} if there exists a compact subset $K\subseteq X$ such that $\Phi(G)(K)=X$. We say that $\Phi$ is a \textbf{geometric} action if it is both proper and cocompact. We say that a group $G$ is \textbf{systolic} if it admits a geometric action on a systolic complex.

A group $G$ is called \textbf{artinian} if any descending chain of subgroups $G_1\supset G_2\supset \ldots$ becomes stationary, that is, $G_n=G_{n+1}=\ldots$ from some $n$ onwards.

Let $\mathcal{G}$ denote the class of all groups $G$ with the following three properties:
\begin{enumerate}
    \item $G$ does not include $\mathbb{Q}$ or the $p$-adic integers $\mathbb{Z}_p$ for any prime as a subgroup;
    \item $G$ does not include the Pr\"ufer $p$ group $\mathbb{Z}(p^{\infty})$ for any prime as a subgroup;
    \item torsion subgroups in $G$ are artinian.
\end{enumerate}
The following fact is known by Prop 11.1 of \cite{dudaall3}.

\begin{pr}
\label{p:automatic}
If $G$ is a systolic group whose torsion subgroups are artinian then $G$ is in the class $\mathcal{G}$.
\end{pr}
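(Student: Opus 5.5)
This will be derived from the cited Proposition 11.1 of \cite{dudaall3}, so the plan is mainly to check that its proof goes through with the structure at hand; there is no new geometric content. Let $G$ act geometrically on a systolic complex $Z$, and assume torsion subgroups of $G$ are artinian. I check the three defining properties of $\mathcal{G}$ in turn.

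For property (3) there is nothing to do: it is the hypothesis.

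For property (1), I rule out $\mathbb{Q}$ and $\mathbb{Z}_p$ using standard facts about systolic groups. First, a geometric action on a systolic complex has a uniform bound on the orders of finite subgroups, since finite subgroups fix a simplex by the systolic fixed point theorem and stabilizers are finite and finitely many up to conjugacy. Second, infinite-order elements of systolic groups act hyperbolically with translation lengths bounded away from zero (Elsner). With these:
\begin{itemize}
\item If $\mathbb{Q}\le G$, the element $1$ has infinite order and is infinitely divisible, so its translation length is infinitely divisible too. This contradicts the discreteness of translation lengths.
\item $\mathbb{Z}_p$ is torsion-free but uncountable. A group acting geometrically on a locally finite complex is countable, indeed finitely generated, so $\mathbb{Z}_p\not\le G$.
\end{itemize}

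For property (2), suppose $\mathbb{Z}(p^\infty)\le G$. Then $G$ contains elements of order $p^n$ for every $n$, contradicting the uniform bound on orders of finite subgroups. Alternatively, the artinian hypothesis alone does not exclude Prüfer groups, since they are artinian, so the finite-subgroup bound is the tool to use here.

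The main obstacle is locating and quoting precisely the systolic inputs: the fixed point theorem for finite groups acting on systolic complexes (Chepoi–Osajda / Przytycki), the finiteness of conjugacy classes of finite subgroups, and the discreteness of translation lengths. Since the paper cites this result as known, I would simply invoke \cite[Prop.~11.1]{dudaall3} after verifying that these hypotheses match.
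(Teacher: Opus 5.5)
Your proposal is correct and matches the paper, which gives no argument of its own and simply quotes \cite[Prop.~11.1]{dudaall3}; your sketch correctly supplies what that citation contains: property (3) is the hypothesis, $\mathbb{Z}(p^\infty)$ is excluded by the uniform bound on orders of finite subgroups (via the systolic fixed point theorem), $\mathbb{Z}_p$ by countability, and $\mathbb{Q}$ by discreteness of translation lengths. The one point to state more carefully is that for $\mathbb{Q}$ you need the stable translation length $\lim_n d(v,g^nv)/n$, which is homogeneous and is bounded below for infinite-order elements of a systolic group (for example via Elsner's invariant thick geodesics, or via biautomaticity of systolic groups).
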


\begin{proof}[Proof of Theorem \ref{thm:tC}]
By \cite[Theorem B]{keppeler2021automatic}, the statement of Theorem \ref{thm:tC} holds for any group from the class $\mathcal{G}$, therefore, the question reduces to showing that $G$ belongs to this class. 

By Proposition \ref{p:automatic}, it suffices to show that $G$ is systolic and every torsion subgroup of $G$ is finite. The second part follows directly from Theorem \ref{thm:tB}. The first part follows from the fact that $G$ admits a geometric action on a graphical \css complex, inducing a geometric action on its corresponding Wise complex, which is systolic by Theorem \ref{t:ss}. 
\end{proof}

\subsection*{Acknowledgements} 
We thank Damian Osajda for helpful discussions.
Part of this work was carried out while K.D. was visiting the GGT research group in Copenhagen. K.D. gratefully acknowledges their hospitality.
Part of this work was carried out while H.G. was visiting the GTA research group in Bilbao. H.G. gratefully acknowledges their hospitality.
The work presented here was partially supported by the Carlsberg Foundation, grant CF23-1226. 
The work presented here was partially supported by the Basque Government, grant IT1483-22. 

\bibliographystyle{amsalpha}
\bibliography{ref}

\end{document}